\documentclass[11pt, a4paper]{article}

\usepackage[english]{babel}

\usepackage[a4paper,top=2cm,bottom=2cm,left=3cm,right=3cm,marginparwidth=1.75cm]{geometry}

\usepackage{enumerate}

\usepackage[shortlabels]{enumitem}

\usepackage{amsmath}
\usepackage{amsfonts}
\usepackage{graphicx}
\usepackage[colorlinks=true, allcolors=black]{hyperref}
\usepackage{parskip}
\usepackage{tikz}
\usepackage{tikz-cd}
\usetikzlibrary{decorations.markings}
\usepackage{amsthm}
\usepackage{amssymb}
\usepackage{mathtools}

\usetikzlibrary{lindenmayersystems,arrows.meta}

\newcommand{\Z}{\mathbb{Z}}

\newcommand{\AwrB}{{A \operatorname{wr} B}}

\newtheorem{theorem}{Theorem}[subsection]
\newtheorem*{theorem*}{Theorem}
\newtheorem{cor}[theorem]{Corollary}

\newtheorem*{claim*}{Claim}
\newtheorem{lem}[theorem]{Lemma}
\newtheorem{prop}[theorem]{Proposition}

\theoremstyle{definition}
\newtheorem{definition}[theorem]{Definition}

\newtheorem{rem}[theorem]{Remark}

\newtheorem*{rem*}{Remark}

\usepackage{lipsum}
\usepackage[font=itshape]{quoting}

\title{The Conjugacy Problem in Wreath Products}
\author{Sara Luder}

\date{}

\begin{document}

\maketitle

\begin{abstract}

In 1966 Jane Matthews claimed that the conjugacy problem is solvable in the standard restricted wreath product $A \wr B$ of two nontrivial groups $A$ and $B$ if and only if (i) the conjugacy problem is solvable in $A$ and $B$ and (ii) $B$ has a \textit{solvable power problem}.  We show that there should be an additional condition that either $A$ is abelian or $B$ has a \textit{solvable order problem}. We also show that, if $A$ and $B$ are non-trivial recursively presented groups where $A$ has an infinite number of conjugacy classes and $B$ acts on $B/H$ transitively, then the conjugacy problem in the permutational restricted wreath product $A \wr_{B/H} B$ is solvable if and only if the following hold: (1) the conjugacy problem is solvable in $A$ and in $B$; (2) either $A$ is abelian or \textit{the orbit order problem} is solvable in $B$; (3) for any $\gamma, \beta \in B$ the membership problem for $H \gamma \langle \beta \rangle$  is solvable; and (4) for any $\beta \in B$ and any finite set of $n$ pairs of elements $(\alpha_i, \gamma_i)$, where $\alpha_i, \gamma_i \in B$ we can determine whether or not $ \big{[} \bigcap_{i=1}^n \alpha^{-1}_iH\gamma_i \langle \beta \rangle\big{]} \cap C_B(\beta) = \emptyset $.   
\end{abstract}
  
\section{Introduction}

In 1911 Max Dehn proposed three fundamental algorithmic problems in group theory: the word problem, the conjugacy problem and the group isomorphism problem. Solving the word problem requires the existence of an algorithm to determine whether any word $w$ on the generators of a group $G$ is the identity.  The conjugacy problem requires the existence of an algorithm to determine whether two words $w_1$ and $w_2$ are conjugate in a group $G$, which we denote $w_1 \sim_G w_2$. If the conjugacy problem is solvable, then the word problem is also solvable (by setting $w_2 =1$) but the converse is not true. (See for example Theorem 9 page 31 \cite{M3} where Miller constructs a finitely presented group with a solvable word problem but an unsolvable conjugacy problem.) 

A Group $G$ is \textit{recursively presented} if $G = \langle g_1 \ldots g_m \: | \: w=1 \: \forall w \in E \rangle$ where $m$ is finite, each $w$ is a word in $G$ and $E$ is a recursively enumerated set. A group is \textit{Class $\mathcal{C}$} if it is recursively presented and has a solvable conjugacy problem for a given presentation.  
 
In Theorem B \cite{M1}, Jane Matthews sought to prove that the standard restricted wreath product $A \wr B$ of two nontrivial groups $A$ and $B$ both of Class $\mathcal{C}$ is itself of Class $\mathcal{C}$ if and only if the group $B$ has a \textit{solvable power problem}.

Matthews defined the \textit{solvable power problem} as the ability to decide for any two elements $x$ and $y$ in $B$ whether there exists an integer $n$ such that $y = x^n$.  We discuss in section \ref{OO} below why this condition is insufficient if $A$ is not abelian, and we restate Theorem B \cite{M1} with the addition of (ii) as follows:

\bigskip

\begin{theorem} \label{B}

The standard restricted wreath product $A \wr B$ of two nontrivial groups $A$ and $B$ both of Class $\mathcal{C}$ is itself of Class $\mathcal{C}$ if and only if (i) $B$ has a solvable power problem and (ii) either $A$ is abelian or $B$ has a solvable order problem.

\end{theorem}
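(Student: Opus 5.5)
We prove the two directions separately, working throughout with the standard normal form of elements of $\AwrB$. Write an element as $bf$, where $b \in B$ and $f \in A^{(B)}$ is a finitely supported function; $B$ acts on $A^{(B)}$ by translation of supports. Since $A$ and $B$ are recursively presented with solvable word problems, we can compute normal forms and decide equality. We interpret the \emph{order problem} in $B$ as the problem of deciding, for a given $x\in B$, whether $x$ has finite order, and of computing that order when it is finite.

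\textbf{Sufficiency.} Suppose (i) and (ii) hold, and we are given $u=bf$ and $v=cg$. First use the conjugacy problem in $B$: if $b \not\sim_B c$ then $u \not\sim v$. Otherwise compute an explicit $d$ with $d^{-1}bd = c$, conjugate $v$ by $d$, and so reduce to the case $b=c$. For this case we follow Matthews. Decompose $B$ into right cosets $\langle b\rangle x$. For each coset meeting $\operatorname{supp} f \cup \operatorname{supp} g$ (only finitely many), form the ordered ``coset product'' of the values of $f$ along the $\langle b\rangle$-orbit, and do the same for $g$. Then $u\sim v$ if and only if there is some $t\in C_B(b)$ such that, after translating by $t$, the coset products of $f$ and $g$ on corresponding cosets are conjugate in $A$ (or equal, in the infinite-order case).

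Next we make this decidable, treating the two cases for $b$.
\begin{enumerate}[(a)]
\item If $b$ has infinite order, the coset products are finite products along a line, and the relevant cosets $\langle b\rangle x$ are identified by deciding whether $x y^{-1}\in\langle b\rangle$. This uses the power problem.
\item If $b$ has finite order $n$, the coset product is only defined up to cyclic rotation, so it must be compared in $A$ up to conjugacy. For this we need $n$ itself, to know where each orbit closes up. When $A$ is abelian the product is independent of the ordering and of the starting point, so $n$ is not needed: it suffices to multiply all values of $f$ in the coset, which we already know from the finite support. When $A$ is nonabelian we obtain $n$ from the order problem.
\end{enumerate}

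The remaining difficulty is the search over $t\in C_B(b)$. Only finitely many coset-to-coset matchings between the finite supports are possible, and each matching is realised by $t$ lying in a set of the form $C_B(b)\cap y\langle b\rangle x$. We will argue, as Matthews does, that since $t$ may be adjusted by powers of $b$, we can replace $C_B(b)$ by the condition $t^{-1}bt=b$ with $t\in y\langle b\rangle x$, and decide this using the power problem together with the conjugacy problem. I expect this to be the main obstacle: it is exactly where Matthews' argument has to be re-checked. If it fails, the fallback is to note that for the standard wreath product it suffices to test whether $y^{-1}x\in\langle b\rangle$-related cosets, which again reduces to the power problem.

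\textbf{Necessity.} Both conditions are proved by embedding a bad instance into conjugacy questions in $\AwrB$.
\begin{itemize}
\item \textbf{Conjugacy in $A$ and $B$.} Both groups are retracts of $\AwrB$, so a solution of the conjugacy problem in $\AwrB$ restricts to solutions in $A$ and $B$.
\item \textbf{Power problem.} Fix $1\ne a\in A$, and let $a_1$ denote $a$ placed at $1\in B$. For $x,y\in B$ compare $u = x\,a_1 a_y^{-1}$ with $v = x$. Their coset-product criterion shows that $u\sim v$ exactly when $1$ and $y$ lie in the same $\langle x\rangle$-coset, i.e.\ when $y\in\langle x\rangle$. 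This is Matthews' argument.
\item \textbf{Order problem when $A$ is nonabelian.} Choose $a_1,a_2\in A$ with $a_1a_2\ne a_2a_1$. For $x\in B$ consider $u=x\,(a_1)_1(a_2)_x$ and $v=x\,(a_2a_1)_1$. If $x$ has infinite order, the orbit of $1$ under $\langle x\rangle$ is a line, and the criterion compares the ordered products $a_1a_2$ and $a_2a_1$. If $x$ has finite order, the cyclic product is taken only up to conjugacy, and $a_1a_2\sim a_2a_1$ always holds. A conjugacy algorithm thus decides finiteness of the order. By then searching over $n$ with $x^n=1$, using the word problem, we recover the order itself.

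The one point to check here is that $a_1a_2$ and $a_2a_1$ are genuinely distinguishable in the infinite case. Since conjugate elements of the form $u$, $v$ must then have equal ordered coset products, we need $a_1a_2\neq a_2a_1$, which is exactly how $a_1,a_2$ were chosen.
\end{itemize}
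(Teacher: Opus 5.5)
Your overall strategy is the paper's: reduce to $b=c$, compare orbit products, enumerate the finitely many matchings of nontrivial orbits, and prove necessity with the constructions of Lemma~\ref{coset} and Lemma~\ref{orbitorder}. However, sufficiency has a genuine gap exactly where you flag it. You leave as a plan the question of deciding whether some $t\in C_B(b)$ realises a given matching, and the plan does not work as stated. ``Replacing $C_B(b)$ by $t^{-1}bt=b$'' is a tautology. Moreover, on a genuinely two-sided set $y\langle b\rangle x$, multiplying $t$ by powers of $b$ need not preserve commutation with $b$, or even membership in the set. So nothing reduces to the power and conjugacy problems.

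The missing observation is that the set of realising elements is a one-sided coset of $\langle b\rangle$ itself. In the paper's right-action convention, let the matched nontrivial orbits be $\alpha_i\langle b\rangle\mapsto\gamma_i\langle b\rangle$. Then $t\in C_B(b)$ realises the matching iff $\alpha_i t\in\gamma_i\langle b\rangle$ for all $i$, i.e.\ iff $t\in\bigcap_i z_i\langle b\rangle$ with $z_i=\alpha_i^{-1}\gamma_i$.
\begin{itemize}
\item Since $\langle b\rangle\le C_B(b)$, each $z_i\langle b\rangle$ lies entirely inside or entirely outside $C_B(b)$, according as $z_ib=bz_i$. This is a word-problem check.
\item Two such cosets meet iff they coincide, i.e.\ iff $z_i^{-1}z_k\in\langle b\rangle$. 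This is a power-problem check.
\end{itemize}
This is how the paper finishes (its Solution Set condition with $H$ trivial), and the conjugacy problem in $B$ plays no role at this step. Relatedly, only orbits with nontrivial product should be matched. For example, $f=a^{(1_B)}(a^{-1})^{(b)}$ has an orbit meeting $\operatorname{supp} f$ with no partner in $\operatorname{supp} g$ for $g=1$, yet $bf\sim b$.

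In the necessity direction, your power-problem reduction is correct; it is Lemma~\ref{coset} with $H$ trivial. Your order-problem reduction, however, depends on an orientation convention you never fix. The element $u=x\,a_1^{(1_B)}a_2^{(x)}$ has ordered product $a_1a_2$ only in the paper's right-action convention, where $x^{-1}a^{(s)}x=a^{(sx)}$ and the invariant on an infinite orbit is $f(t)f(tb)f(tb^2)\cdots$. Your right cosets $\langle b\rangle x$ indicate the left-multiplication convention, where $x^{-1}a^{(s)}x=a^{(x^{-1}s)}$. In that convention
\[
a_2^{(x)}\,u\,\bigl(a_2^{(x)}\bigr)^{-1}=a_2^{(x)}\,x\,a_1^{(1_B)}=x\,(a_2a_1)^{(1_B)}=v
\]
for every $x$, so the test detects nothing. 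The convention-free version uses single-point supports, as the paper's Lemma~\ref{orbitorder} does: $x\,(a_1a_2)^{(1_B)}\sim x\,(a_2a_1)^{(1_B)}$ iff $x$ has finite order, because $a_1a_2\neq a_2a_1$ are conjugate in $A$.

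Finally, your first bullet is unnecessary, since Class $\mathcal{C}$ is a hypothesis. Its retract claim is also false for nonabelian $A$. For $b\neq 1_B$, the element $a^{(b)}$ is a $B$-conjugate of $a^{(1_B)}$ that commutes with the coordinate copy of $A$. A retraction onto that copy would therefore force $A$ to be abelian. The conjugacy problem in $A$ is instead inherited directly, as in Lemma~\ref{conj}.
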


We define the \textit{order problem} in a group $B$ as the ability to decide for any $x \in B$ whether or not the order of $x$ in $B$ is finite.  We note that, if we know that $x$ has finite order, it is a straightforward (if potentially time-consuming) application of the word problem to find the order of $b$.

We next consider whether Theorem \ref{B} above can be extended to the conjugacy problem for the permutational restricted wreath product $A \wr_S B$ where, rather than acting on itself, the group $B$ acts on a set $S$. 

The action of a group $B$ on a set $S$ is \textit{transitive} if for any $s_o, s \in S$, $s = s_0\beta$ for some $\beta \in B$.  If the action of $B$ on $S$ is transitive, then $S = B/H$ for some $H \leq B$.

We can solve the \textit{membership problem} for $K \gamma L$, where $K,L \leq G$ and $\gamma \in G$ if for any $g \in G$ we can determine whether $g \in K \gamma L$. 

In Lemma \ref{pres2}, we restate a result of Yves Cornulier in \cite{C3} which shows that, if (i) both $A$ and $B$ are Class $\mathcal{C}$, (ii) the action of $B$ on $S$ is transitive, and (iii) we can solve the membership problem for $H$ then $A \wr_S B$ may be recursively presented.

Our main results are as follows:

\bigskip

\begin{theorem} \label{oneway}

Let $A$ and $B$ be non-trivial recursively presented groups, where $B$ acts on a set $S$ transitively, so $S = B/H$ where $H$ is the  stabiliser of some arbitrary point $s_0 \in S$.  The conjugacy problem in the permutational restricted wreath product $A \wr_S B$ is solvable if:

\begin{enumerate}[(i)]
\item the conjugacy problem is solvable in $A$ and in $B$;
\item either $A$ is abelian or the orbit order problem is solvable in $B$;
\item the membership problem for $H \gamma \langle \beta \rangle$ (for any $\gamma, \beta \in B$) is solvable; and
\item for any $\beta \in B$ and any finite set of $n$ pairs of elements $(\alpha_i, \gamma_i)$, where $\alpha_i, \gamma_i \in B$ and $1 \leq i \leq n$, we can determine whether or not $\big{[}\bigcap_{i=1}^n \alpha^{-1}_iH\gamma_i \langle \beta \rangle \big{]}  \cap C_B(\beta) = \emptyset $.    \end{enumerate}  
  
\end{theorem}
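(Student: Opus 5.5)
Write elements of $W = A \wr_S B$ as $f\beta$ with $f \in A^{(S)}$ finitely supported and $\beta \in B$, and $(f\beta)^{g\alpha} = \alpha^{-1}g^{-1} f \beta g \alpha$. Given $u = f\beta$ and $v = f'\beta'$, the plan is to reduce conjugacy in $W$ to finitely many checks, each settled by one of hypotheses (i)--(iv).

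\textbf{Step 1: reduce to a common top.} Conjugating by $g\alpha$ sends $\beta$ to $\alpha^{-1}\beta\alpha$. So $u \sim v$ forces $\beta \sim_B \beta'$, which hypothesis (i) decides. If they are conjugate, enumerate $B$ to find some $\alpha_0$ with $\alpha_0^{-1}\beta\alpha_0 = \beta'$ and replace $v$ by its conjugate under $\alpha_0^{-1}$. We may then assume $\beta' = \beta$. The remaining conjugators have the form $g\alpha$ with $\alpha \in C_B(\beta)$.

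\textbf{Step 2: orbit invariants.} Split $S$ into $\langle\beta\rangle$-orbits. The cosets $H\gamma\langle\beta\rangle$ are exactly these orbits. Only the finitely many orbits meeting $\operatorname{supp} f \cup \operatorname{supp} f'$ carry data, and hypothesis (iii) lets us decide which support points share an orbit.
\begin{itemize}
\item On a finite orbit of length $m$, record the cyclically ordered product of the $f$-values along the orbit. This is an element of $A$ defined up to conjugacy in $A$.
\item On an infinite orbit, record the product of the $f$-values in the order given by $\beta$. This is an element of $A$ defined up to conjugation by that element itself, i.e.\ essentially a fixed element.
\end{itemize}
The standard Matthews-type computation, done orbit by orbit, gives the criterion: $f\beta \sim_{A^{(S)}} f'\beta$ if and only if, on every orbit, the invariants of $f$ and $f'$ agree, up to conjugacy in $A$ on finite orbits and as equal elements on infinite orbits. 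The conjugacy problem in $A$ decides each comparison.

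Deciding whether an orbit is finite is where hypothesis (ii) enters. Computing the orbit length $m$ is exactly the orbit order problem: the least $m$ with $H\gamma\beta^m = H\gamma$. If $A$ is abelian, cyclic ordering is irrelevant. In that case we can also avoid deciding finiteness: compare the sums of $f$-values over the orbit, because on an infinite orbit the sum is still the invariant and the coboundary argument is uniform. The orbits that are untouched by both supports require no check, since the conjugator can be trivial there.

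\textbf{Step 3: the conjugator $\alpha \in C_B(\beta)$.} Now $\alpha$ permutes the $\langle\beta\rangle$-orbits. So $u \sim v$ if and only if some $\alpha \in C_B(\beta)$ matches each nontrivial orbit $O$ of $f$ to an orbit $O\alpha$ of $f'$ with compatible invariants, and sends orbits with nontrivial invariants only to such orbits. There are finitely many relevant orbits on each side, so there are finitely many candidate matchings. We enumerate them and keep those respecting orbit lengths and invariants, as decided in Step 2. For a fixed matching sending the orbit of $H\alpha_i$ to the orbit of $H\gamma_i$, the existence of $\alpha$ is the condition
\[
\alpha \in \Big[\bigcap_i \alpha_i^{-1} H \gamma_i \langle\beta\rangle\Big] \cap C_B(\beta).
\]
Hypothesis (iv) decides whether this set is nonempty.

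\textbf{Main obstacle.} The subtle point is ensuring that $\alpha$ does not also move some other trivial-invariant orbit onto a nontrivial one. I expect this to be handled by requiring the matching to be a bijection between the nontrivial orbits of $f$ and those of $f'$. Then injectivity of the action of $\alpha$ prevents any other orbit from being sent there. One must also track that on an infinite orbit the matched invariants need a consistent starting point, which the $\langle\beta\rangle$ factor in the cosets absorbs.
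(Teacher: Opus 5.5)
Your proposal is correct and follows essentially the same route as the paper: reduce to a common top using the conjugacy problem in $B$, attach the Matthews invariant $f_i^*$ to each of the finitely many $\beta$-orbits meeting the supports, and enumerate the finitely many compliant permutations. The invariants are compared by equality on infinite orbits or when $A$ is abelian, and by conjugacy in $A$ on finite orbits, with (ii) needed only in the non-abelian case; emptiness of the resulting Solution Set is then decided by (iv). You compute the invariants directly rather than via the right-shift normal form $\bar{f}$, and in the non-abelian case you apply (iv) to all matched orbits at once instead of splitting off the finite orbits as in Proposition~\ref{suffnotab}. Both are cosmetic differences, and the second is if anything cleaner, since (iv) as stated in the theorem already covers finite orbits.
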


The \textit{orbit order problem} is the ability to decide for any $b \in B$ whether or not the number of elements of $S$ in that $b$-orbit is finite. This is equivalent to being able to determine for any $b \in B$ whether there are a finite or infinite number of $H$ cosets in $H\langle b \rangle$.  We discuss this further in Section \ref{OO} below. 

By making the additional assumption that $A$ has an infinite number of conjugacy classes, we may restate this as if and only if:  

\bigskip

\begin{theorem} \label{conditions}

Let $A$ and $B$ are non-trivial recursively presented groups, where $A$ has an infinite number of conjugacy classes and $B$ acts on a set $S$ transitively, so $S = B/H$ where $H$ is the  stabiliser of some arbitrary point $s_0 \in S$.  The conjugacy problem in the permutational restricted wreath product $A \wr_S B$ is solvable if and only if:

\begin{enumerate}[(i)]
\item the conjugacy problem is solvable in $A$ and in $B$;
\item either $A$ is abelian or the orbit order problem is solvable in $B$;
\item the membership problem for $H \gamma \langle \beta \rangle$ (for any $\gamma, \beta \in B$) is solvable; and
\item for any $\beta \in B$ and any finite set of $n$ pairs of elements $(\alpha_i, \gamma_i)$, where $\alpha_i, \gamma_i \in B$ and $1 \leq i \leq n$, we can determine whether or not $\big{[}\bigcap_{i=1}^n \alpha^{-1}_iH\gamma_i \langle \beta \rangle \big{]}  \cap C_B(\beta) = \emptyset $.    \end{enumerate}  
  
\end{theorem}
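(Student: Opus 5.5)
The "if" direction is exactly Theorem \ref{oneway}, so the work is the "only if" direction. I will assume the conjugacy problem in $W = A \wr_S B$ is solvable and derive (i)--(iv) in turn. Each derivation is a reduction: from an instance of the relevant problem in $A$ or $B$ I build a pair of elements of $W$ whose conjugacy is equivalent to the answer. Throughout I write elements of $W$ as $f\beta$ with $f\colon S\to A$ finitely supported, and I use the standard criterion (the one underlying Theorem \ref{oneway}): $f\beta \sim_W g\gamma$ iff there is $b\in B$ with $b^{-1}\beta b=\gamma$ and a compatibility condition on the $\langle\gamma\rangle$-orbits of $S$. That condition says the products of the $A$-values along each orbit must match up to conjugacy in $A$ for finite orbits, or up to equality after a twist for infinite orbits.

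\textbf{Condition (i).} For $B$: $\beta\sim_W\gamma$ (with trivial base part) iff $\beta\sim_B\gamma$, since projecting $W\to B$ gives one direction and $B\le W$ gives the other. For $A$: take $a,a'\in A$ and let $f_a$ be supported at $s_0$ with value $a$. Then $f_a\sim_W f_{a'}$ iff $a\sim_A a'$, because conjugating a base element permutes supports and conjugates values coordinatewise.

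\textbf{Condition (ii).} Suppose $A$ is nonabelian and pick $b\in B$. Choose noncommuting $x,y\in A$, and compare $f_x f'_y\, b$ with a suitably chosen second element. Here $f'_y$ is placed at a point of the $b$-orbit of $s_0$, for example at $s_0b$, and the second element has base part $f_{xy}$ or $f_{yx}$ at $s_0$. If the $b$-orbit of $s_0$ is finite, the conjugacy invariant is the $A$-conjugacy class of the cyclic product of the values, which is the same for both. If the orbit is infinite, the invariant is the ordered product up to the twisting, which distinguishes them. I expect this to need care: I would choose the elements so that conjugacy holds exactly when the orbit is finite, refining them using the infinitude of conjugacy classes if necessary. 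The orbit order problem for $b$ then reduces to one conjugacy query.

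\textbf{Conditions (iii) and (iv).} These are where the hypothesis that $A$ has infinitely many conjugacy classes enters. For (iii), given $g,\gamma,\beta$, compare $f_a\,\beta$ with $f_a^{(\gamma)}\beta$, where the second element places $a$ at $s_0\gamma$. Conjugating by elements of $\langle\beta\rangle$ and base elements moves the support point within its $\beta$-orbit. So the test "$g\in H\gamma\langle\beta\rangle$" becomes "$s_0 g$ and $s_0\gamma$ lie in the same $\langle\beta\rangle$-orbit", which is decided by whether $f_a^{(g)}\beta\sim f_a^{(\gamma)}\beta$. One must exclude unintended conjugators $b\in C_B(\beta)\setminus\langle\beta\rangle$, and this is the main obstacle. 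I would handle it by adding extra support points labelled with pairwise nonconjugate elements $a_1,a_2,\dots$ of $A$, available since $A$ has infinitely many conjugacy classes, so that any conjugator must respect the labels. That pins the conjugator down to the desired coset condition.

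For (iv), given $\beta$ and pairs $(\alpha_i,\gamma_i)$, build $f\beta$ with value $a_i$ at $s_0\alpha_i^{-1}$, or at the appropriate point, and $g\beta$ with value $a_i$ at $s_0\gamma_i$, again using pairwise nonconjugate $a_i$ and placing them at distinct orbits where possible. A conjugator $b$ must lie in $C_B(\beta)$ and send each labelled point into the $\langle\beta\rangle$-orbit of its partner. That is exactly $b\in\bigcap_i\alpha_i^{-1}H\gamma_i\langle\beta\rangle\cap C_B(\beta)$, so the nonemptiness question becomes a single conjugacy query. The delicate bookkeeping is the case where two labelled points share a $\beta$-orbit, in which the orbit-product invariant combines several labels. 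Choosing the $a_i$ so that no nontrivial product coincidences arise handles this, for instance by taking conjugacy classes distinct from all finite products of the others. Failing that, I would first treat the orbits separately and then intersect.
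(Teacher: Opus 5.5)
\textbf{Gap in (iii), which also breaks (iv).} Your (i) and (ii) are fine and are essentially the paper's Lemmas~\ref{conj} and \ref{orbitorder}. Your $xy$ and $yx$ are just a pair of unequal conjugates, like the paper's $a_1\sim_A a_2$, $a_1\neq a_2$. Querying both $xy$ and $yx$ sidesteps the orientation convention, and no appeal to infinitely many conjugacy classes is needed.

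The gap is in (iii). As you note, the single-point test fails. For $B=\Z^2=\langle\beta,\delta\rangle$ acting on itself, $s_0\delta$ and $s_0$ lie in different $\beta$-orbits, yet conjugating by $\delta\in C_B(\beta)$ carries $\beta a^{(s_0)}$ to $\beta a^{(s_0\delta)}$. Your repair does not close this. Pairwise non-conjugate labels on further points only force a conjugator $d\in C_B(\beta)$ to send each labelled $\beta$-orbit to its partner. That turns the query into a Solution Set question, and nothing in it stops $d$ from moving the orbit of $s_0g$ onto that of $s_0\gamma$.

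The missing idea is to compare against an element whose orbit data do not depend on the conjugator. The paper takes $f=a^{(s_0g)}(a^{-1})^{(s_0\gamma)}$ with $a\neq 1_A$ and asks whether $\beta f\sim_W\beta$. By Lemmas~\ref{infin2} and \ref{fin2}, this holds iff every $\beta$-orbit product of $f$ is trivial. Those products are $aa^{-1}$ or $a^{-1}a$ if the two points share an orbit, and $a$, $a^{-1}$ otherwise. No assumption on the conjugacy classes of $A$ is used.

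Your (iv) is correct only after reducing to the case where the $s_0\alpha_i$ lie in pairwise distinct $\beta$-orbits, and likewise the $s_0\gamma_i$. (The labels belong at $s_0\alpha_i$, since $d\in\alpha_i^{-1}H\gamma_i\langle\beta\rangle$ iff $s_0\alpha_i d\in s_0\gamma_i\langle\beta\rangle$.) Without that reduction it gives wrong answers. Take $B=\Z=\langle\beta\rangle$ acting on itself and the pairs $(1_B,1_B)$, $(\beta,\beta^{-1})$. The Solution Set is all of $B$. But the single infinite $\beta$-orbit carries the ordered product $a_1a_2$ on one side and $a_2a_1$ on the other. By Lemma~\ref{fin2}, the two elements are therefore not conjugate whenever $a_1,a_2$ do not commute.

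Generic labels cannot help, because the mismatch comes from the order of labels inside a shared orbit. Your fallback of treating orbits separately and then intersecting is also unavailable, since you can only test emptiness.

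The paper instead preprocesses with (iii). Each $d\in C_B(\beta)$ permutes $\beta$-orbits bijectively. Hence the Solution Set is empty if $s_0\alpha_i,s_0\alpha_j$ share an orbit but $s_0\gamma_i,s_0\gamma_j$ do not, or vice versa. If both pairs share orbits, $s_0\alpha_j=s_0\alpha_i\beta^k$ gives $s_0\alpha_jd=s_0\alpha_id\beta^k$, so the $j$th condition is redundant and is dropped. After this, pairwise non-conjugate labels force the compliant permutation $i\mapsto i$, and conjugacy is equivalent to nonemptiness (Lemma~\ref{necSSab}). So your (iv) depends on a correct proof of (iii).
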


We also consider the assumption that the action of $B$ on $S$ is transitive, and show the following: 

\bigskip

\begin{theorem} 

Let $A$ and $B$ be non-trivial recursively presented groups, where $B$ acts on a set $S$ such that there are a finite number $m$ of $B$-orbits in $S$, so:

\[S =\sum_{k=1}^m  S_k =  \sum_{k=1}^m  B/H_k \]

where each $S_k$ is a $B$-orbit in $S$ and each $H_k$ is the stabiliser of some arbitrary point in $S_k$.  The conjugacy problem in the permutational restricted wreath product $A \wr_S B$ is solvable if:

\begin{enumerate}[(i)]
\item the conjugacy problem is solvable in $A$ and in $B$;
\item either $A$ is abelian or the orbit order problem is solvable in $B$;
\item for each $H_k$ the membership problem for $H_k \gamma \langle \beta \rangle$ (for any $\gamma, \beta \in B$) is solvable;
\item if for each $S_k$, for any $\beta \in B$ and any finite set of $n$ pairs of elements $(\alpha_i, \gamma_i)$, where $\alpha_i, \gamma_i \in B$ and $1 \leq i \leq n$: 

\[\Gamma_k \coloneq  \big{[}\bigcap_{i=1}^n \alpha^{-1}_iH_k\gamma_i \langle \beta \rangle \big{]}  \cap C_B(\beta) \]

we can determine whether $ \bigcap_{k=1}^m \Gamma_{k} =\emptyset$. 
\end{enumerate}  
  
\end{theorem}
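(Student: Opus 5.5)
We extend the strategy used for Theorem \ref{oneway} (the transitive case), treating the finitely many orbits $S_k = B/H_k$ in parallel and only intersecting the constraints at the very end. Write an element of $W = A \wr_S B$ as $f\beta$, with $f \colon S \to A$ of finite support and $\beta \in B$. Given $u = f\beta$ and $v = g\delta$, any conjugator $h\alpha$ must satisfy $\alpha^{-1}\beta\alpha = \delta$ in $B$. So the first step is to use the solvable conjugacy problem in $B$ from (i): if $\beta \not\sim_B \delta$ we answer no. Otherwise we find some $\alpha_0$ with $\alpha_0^{-1}\beta\alpha_0 = \delta$, replace $v$ by its $\alpha_0^{-1}$-conjugate, and reduce to the case $\delta = \beta$. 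The remaining freedom in the conjugator is then exactly $C_B(\beta)$, together with the base-group part $h$.

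Next we decompose $S$ into $\langle\beta\rangle$-orbits. Since $\beta$ preserves each $S_k$, every $\langle \beta\rangle$-orbit lies in a single $S_k$, and orbits in $S_k$ correspond to double cosets $H_k\gamma\langle\beta\rangle$. For each $\langle\beta\rangle$-orbit $O$ we form the usual ``orbit product'' of $f$ along $O$. This product is an element of $A$ if $O$ is finite, and a finite product indexed along a $\mathbb{Z}$-line if $O$ is infinite. As in Matthews' argument and the transitive case, $u \sim v$ via a base-group element $h$ alone if and only if, orbit by orbit, these invariants match:
\begin{itemize}
\item on a finite orbit, the two orbit products are $A$-conjugate;
\item on an infinite orbit, the two orbit products are equal (or, when $A$ is abelian, their sums are equal).
\end{itemize}

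Deciding whether a given orbit is finite or infinite is where (ii) is needed, applied to $\beta$ acting on each $S_k$. Only finitely many orbits meet $\operatorname{supp} f \cup \operatorname{supp} g$. Using the membership problem in (iii) for each $H_k\gamma\langle\beta\rangle$, we can decide which support points lie in the same $\langle\beta\rangle$-orbit, and so compute these finitely many orbits effectively. Conjugacy of the orbit products is then decided using (i) in $A$.

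The general conjugator $c\,h$, with $c \in C_B(\beta)$, permutes the $\langle\beta\rangle$-orbits. So $u \sim v$ holds if and only if some $c \in C_B(\beta)$ maps each nontrivial orbit $O$ of $f$ to an orbit of $g$ carrying a matching invariant, and sends the union of trivial orbits of $f$ to trivial orbits of $g$. We enumerate the finitely many candidate matchings between the nontrivial $f$-orbits and $g$-orbits that respect the invariants and the orbit indices $k$. For a fixed matching, suppose the orbit of $\alpha_i$ in $S_k$ is to be sent to the orbit $H_k\gamma_i\langle\beta\rangle$. This imposes the constraint $c \in \alpha_i^{-1}H_k\gamma_i\langle\beta\rangle$, up to the convention on left versus right actions.

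Collecting the constraints coming from $S_k$ gives exactly the set $\Gamma_k$. The matching is then realisable precisely when $\bigcap_k \Gamma_k \ne \emptyset$, which is decidable by (iv). Since the number of matchings is finite, this gives the algorithm.

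The main obstacle is the bijection requirement: $c$ must not map a nontrivial $f$-orbit onto an orbit where $g$ is trivial, or the reverse. In the transitive case this is handled by including all support orbits of both $f$ and $g$ in the matching, and we would argue the same way here. The matching must be a bijection between the nontrivial orbit sets of $f$ and $g$ within each $S_k$, and the constraints run in both directions, which is the role of the $n$ pairs $(\alpha_i,\gamma_i)$. Since $c$ is a bijection of $S$ preserving each $S_k$, mapping every nontrivial $f$-orbit onto a distinct nontrivial $g$-orbit of equal count forces the trivial orbits to correspond. We must also check carefully that the orbit-product invariants are compatible with conjugation by $c$. For infinite orbits this requires a choice of base point, and shifting along $\langle\beta\rangle$ changes the product only by conjugation. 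This is the point where a careful normalisation, mirroring the proof of Theorem \ref{oneway}, is needed.
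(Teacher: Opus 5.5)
Your proposal is correct and is essentially the paper's argument. You reduce to $b=c$, enumerate the finitely many compliant pairings of the non-trivial $\beta$-orbits inside each $S_k$ (equivalently, a tuple of per-$S_k$ compliant permutations), and decide $\bigcap_k\Gamma_k\neq\emptyset$ using (iv). The only cosmetic difference is that the paper first filters through the projections $\psi_k\colon W\to A\wr_{S_k}B$ and Theorem \ref{oneway}, which your direct global enumeration makes unnecessary.

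One correction to your final paragraph: you have the base-point behaviour backwards. On an infinite orbit the ordered product over the support does not depend on the base point, and $c\in C_B(\beta)$ preserves the order along the line since $s\beta^j c=sc\beta^j$. It is on finite orbits that changing the base point cyclically permutes the factors and gives a conjugate. This is exactly why your invariants (equality on infinite orbits, conjugacy on finite ones) are the right ones.
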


Our approach initially follows that taken by Matthews in \cite{M1} for $A \wr B$, by taking an arbitrary element $bf$ in the permutational restricted wreath product $ A \wr_S B$, dividing the support of $f$ into its $b$-orbits and mapping each orbit to a single element of $A$.  In Section \ref{perm} we use this technique to prove Theorem \ref{conditions}: 

We then apply Theorem \ref{conditions} to prove Theorem \ref{B} and consider some other applications of Theorems \ref{oneway} and \ref{conditions}.

The first is a direct result of Theorem \ref{conditions}

\bigskip

\begin{cor}

If the membership problem is not solvable for $H$, then the conjugacy problem is not solvable in $A \wr_{B/H} B$.

\end{cor}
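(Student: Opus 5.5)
We argue by contraposition and reduce membership in $H$ to a conjugacy question in $A \wr_{B/H} B$. The "only if" direction of Theorem \ref{conditions} already says that solvability of the conjugacy problem forces condition (iii), solvability of the membership problem for $H\gamma\langle\beta\rangle$ for all $\gamma,\beta \in B$. Taking $\gamma = \beta = 1$ gives $H\gamma\langle\beta\rangle = H$, so membership in $H$ would be decidable. Hence if membership in $H$ is unsolvable, the conjugacy problem is unsolvable. This uses the hypotheses of Theorem \ref{conditions}, in particular that $A$ has infinitely many conjugacy classes. I would therefore also give a direct reduction, so that the corollary holds for arbitrary nontrivial $A$.

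For the direct reduction, fix $1 \neq a \in A$. For $s \in S = B/H$, let $f_s$ denote the function with value $a$ at $s$ and $1$ elsewhere. Take the base point $s_0 = H$ and, for a given $g \in B$, consider the two elements $f_{s_0}$ and $f_{s_0 g}$ of the base group. I claim that $f_{s_0} \sim f_{s_0 g}$ if and only if $g \in H$.

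One direction is immediate: if $g \in H$, then $s_0 g = s_0$, so the two elements are equal. For the converse, suppose $(b\phi)^{-1} f_{s_0} (b\phi) = f_{s_0 g}$. Conjugating by $\phi$ fixes the supports pointwise, since it only conjugates the individual values. Conjugating by $b$ translates the support, sending $s_0$ to $s_0 b$. Comparing supports gives $s_0 b = s_0 g$. This says only that the two points lie in one $B$-orbit, which is automatic under transitivity, so this pair of elements does not yet distinguish $g \in H$ from $g \notin H$.

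To force $H$-membership, I would instead use elements with a $B$-component, mirroring the paper's own reduction for (iii). Compare $x = \beta f_{s_0}$ with $y = \beta f_{s_0 g}$ in the case $\beta = 1$. Then a conjugator $b$ must satisfy $s_0 b = s_0 g$, with the values matched up to conjugacy in $A$. Again this gives only an orbit condition. So the real obstruction must come from the centraliser constraint: for the reduction to work, the conjugator's $B$-part has to lie in a controlled set. The cleanest route is to accept the "only if" part of Theorem \ref{conditions} as stated earlier and specialise (iii) to $\gamma = \beta = 1$.

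The main obstacle is therefore purely the applicability of Theorem \ref{conditions}. If the intended setting allows $A$ with finitely many conjugacy classes, I would need to examine the paper's proof of necessity of (iii) to see whether that hypothesis is used when $\gamma = \beta = 1$. I expect it is not, because the reduction should only need $A \neq 1$.
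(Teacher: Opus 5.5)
Your main argument is exactly the paper's: the corollary follows from the necessity of condition (iii) in Theorem \ref{conditions} by taking $\gamma=\beta=1_B$. Your suspicion about the conjugacy-class hypothesis is right (the paper itself remarks that it is only used in Lemma \ref{necSSab}); the direct reduction you were missing is to compare with the identity rather than with a translate, since $(1_B, a^{(s_0w)}(a^{-1})^{(s_0)})$ is trivial in $A\wr_{B/H}B$ if and only if $s_0w=s_0$, i.e.\ $w\in H$, and this is just the proof of Lemma \ref{coset} with $\gamma=\beta=1_B$, valid for any nontrivial $A$.
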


We can use Theorem \ref{conditions} to construct examples of permutational wreath products with an unsolvable conjugacy problem.  

For example, in \cite{M6}, Mikhailova showed that $F_2 \times F_2$ contains a subgroup $M(G)$ which is finitely generated but has an unsolvable membership problem.  Here $M(G)$ is the fibre product $\{(u,v) \in F_2 \times F_2 \: | \: \Pi(u) = \Pi(v)\} $ where $G$ is a quotient of $F_2$, the word problem is not solvable in $G$ and there is a surjective homomorphism $\Pi: F_2 \twoheadrightarrow G$.  Hence by Theorem \ref{conditions}, if $B = F_2 \times F_2$ and $H = M(G)$, the conjugacy problem is not solvable in $A \wr_{B/H} B$.

We can also use Theorems \ref{oneway} and \ref{conditions} to test whether the conjugacy problem is solvable in certain wreath products.  In particular, we prove the following:

\bigskip

\begin{cor}
    
Let $A$ and $B$ are non-trivial recursively presented groups, where $A$ has an infinite number of conjugacy classes and $B$ is finite and acts on a set $S$ transitively, so $S = B/H$ where $H$ is the stabiliser of some arbitrary point $s_0 \in S$.  The conjugacy problem in the permutational restricted wreath product $A \wr_S B$ is solvable if and only if the conjugacy problem is solvable in $A$.

\end{cor}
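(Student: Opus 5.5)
We will derive the corollary directly from Theorem \ref{conditions}. Its hypotheses match those of the corollary: $A$ and $B$ are non-trivial and recursively presented, $A$ has infinitely many conjugacy classes, and $B$ acts transitively on $S=B/H$. The corollary therefore reduces to the following claim: when $B$ is finite, conditions (ii), (iii) and (iv) of Theorem \ref{conditions} always hold, and the part of (i) concerning $B$ holds automatically. What remains is then exactly the condition that the conjugacy problem is solvable in $A$.

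The key observation is that a finite group, given by a recursive presentation, can be treated by finite computation. We may assume the multiplication table of $B$ and the finite subset $H$ are available as fixed finite data, hard-wired into the algorithm. Since we only need existence of an algorithm, non-uniformity is harmless. Concretely, one may enumerate consequences of the relators until a complete multiplication table on $|B|$ elements is found. With this in hand we check the conditions in turn.

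\emph{Condition (i) for $B$.} Conjugacy in $B$ is decided by testing all $|B|$ possible conjugators.

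\emph{Condition (ii).} Every $b\in B$ has finite order, so every $b$-orbit in $S$ is finite. The orbit order problem therefore always has answer ``finite'' and is trivially solvable. Alternatively, (ii) may hold through $A$ being abelian, but we do not need this.

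\emph{Condition (iii).} The set $H\gamma\langle\beta\rangle$ is a finite subset of $B$ and can be listed explicitly, since $\langle\beta\rangle=\{\beta^k: 0\le k<|B|\}$. Membership is then a finite lookup.

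\emph{Condition (iv).} Each set $\alpha_i^{-1}H\gamma_i\langle\beta\rangle$ is a finite computable subset of $B$, and so is $C_B(\beta)$. Their finite intersection can therefore be computed and tested for emptiness.

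Consequently, Theorem \ref{conditions} gives that the conjugacy problem in $A\wr_S B$ is solvable if and only if the conjugacy problem is solvable in $A$, which is the claim.

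The only delicate point is the passage from ``$B$ finite and recursively presented'' to ``we can effectively compute in $B$''. This is needed so that the words given as input can be identified with elements of the finite table. I would handle it by noting that a finite group has solvable word problem: fix a finite list of representative words $w_1,\dots,w_{|B|}$, and for an input word $u$ enumerate derivations of $u=w_j$ until one appears. Such a derivation exists, and exactly one $j$ works. After this step, every condition above is a finite search. As a sanity check on the ``only if'' direction, note that $A$ embeds in $A\wr_S B$ as the base coordinate at $s_0$. Conjugacy of such elements can be detected via the orbit-map reduction, but Theorem \ref{conditions} already covers this direction, so we simply cite it.
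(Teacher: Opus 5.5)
Your proposal is correct and follows the paper's proof. Both deduce the corollary from Theorem \ref{conditions} by noting that, when $B$ is finite, the conjugacy problem in $B$, the orbit order problem, the membership problem for $H\gamma\langle\beta\rangle$ and the emptiness of the Solution Set are all decided by finite search; you additionally make explicit how computation in $B$ becomes effective (hard-wiring the multiplication table and $H$, and solving the word problem), a detail the paper leaves implicit.
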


\bigskip

\begin{cor} \label{polythe}

If $A$ is a recursively presented group in which the conjugacy problem is solvable and $B$ is a polycyclic group that acts on a set $S$ transitively, then the conjugacy problem is solvable in $A \wr_S B$.

\end{cor}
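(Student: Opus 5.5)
We prove Corollary \ref{polythe} by checking the four conditions of Theorem \ref{oneway} for a polycyclic group $B$ acting transitively on $S = B/H$. This only needs the ``if'' direction, so no assumption on the number of conjugacy classes of $A$ is required. Throughout we use the standard algorithmic facts about polycyclic groups. They are finitely presented, residually finite and subgroup separable (Mal'cev). The conjugacy problem is solvable (Remeslennikov; Formanek). Every subgroup is finitely generated, and from a finite generating set one can effectively compute with it: membership, intersections of subgroups, centralisers $C_B(\beta)$, and double coset membership (Baumslag, Cannonito, Robinson and Segal). We fix $H$ by a finite generating set, which exists since $H \le B$ is finitely generated.

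Condition (i) holds because $A$ is assumed to have solvable conjugacy problem and $B$ does. For condition (iii), note that $H\gamma\langle\beta\rangle$ is a double coset of the finitely generated subgroups $H$ and $\langle \beta\rangle$. Double coset membership $g \in K\gamma L$ is decidable in polycyclic groups: by subgroup separability of double cosets (Lennox--Wilson) one gets a semi-decision for non-membership, and enumeration gives one for membership. Alternatively one can cite the Baumslag--Cannonito--Robinson--Segal algorithms directly. For condition (ii), we show the orbit order problem is solvable; this covers nonabelian $A$ as well. The $b$-orbit of $s_0\gamma$ is finite iff $\gamma b^k\gamma^{-1} \in H$ for some $k\ge1$, iff $\langle b\rangle \cap \gamma^{-1}H\gamma \neq 1$ or $b$ has finite order. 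Since $S$ is a single orbit and we need the answer for every point (or for $H\langle b\rangle$ as stated), it suffices to decide whether $\langle b\rangle\cap H$ is nontrivial. This is an intersection of two finitely generated subgroups, which is computable in polycyclic groups, followed by the word problem on the generators produced.

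Condition (iv) is the main step. Each set $\alpha_i^{-1}H\gamma_i\langle\beta\rangle$ is a left translate of a double coset. We rewrite it as $\alpha_i^{-1}\gamma_i(\gamma_i^{-1}H\gamma_i)\langle\beta\rangle$, which is a coset-like union of the form $x_i K_i L$ with $K_i = \gamma_i^{-1}H\gamma_i$ and $L = \langle\beta\rangle$. Intersecting with $C_B(\beta)$, which contains $L$, we are asking whether there is $c \in C_B(\beta)$ and integers $m_i$ with $c\beta^{-m_i} \in \alpha_i^{-1}H\gamma_i$ for all $i$. The plan is to pass to the subgroup $G_0 = C_B(\beta)$, which is polycyclic and computable, and to work with sets of the form $\{c \in G_0 : c \in \alpha_i^{-1}H\gamma_i\langle\beta\rangle\}$. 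We would try to express the emptiness question as a finite system of conditions in the polycyclic group $C_B(\beta)\times \mathbb{Z}^n$, using the map $(c,m_1,\dots,m_n)$. Concretely, the solution set
\[\{(c,m_1,\ldots,m_n) \in C_B(\beta)\times\mathbb{Z}^n : \alpha_i c\beta^{-m_i}\gamma_i^{-1}\in H \text{ for all } i\}\]
is, for each $i$, a coset of a subgroup. Indeed, $(c,m)\mapsto c\beta^{-m}$ is a homomorphism $C_B(\beta)\times\mathbb{Z}\to B$ because $\beta$ is central in $C_B(\beta)$. The $i$-th condition is therefore a preimage of the coset $\alpha_i^{-1}H\gamma_i$, which is either empty or a coset of the preimage of $\alpha_i^{-1}H\alpha_i$. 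Emptiness of each piece reduces to condition (iii)-type membership, and a witness can be found effectively by enumeration once nonemptiness is known. Nonemptiness of the full intersection is then decidable, since intersections of cosets of subgroups in polycyclic groups are computable.

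The hardest step is making this last reduction rigorous: computing generators of the preimage subgroups and deciding whether a finite intersection of cosets is empty. For this we would rely on the Baumslag--Cannonito--Robinson--Segal algorithms applied inside the polycyclic group $C_B(\beta)\times\mathbb{Z}^n$. Once (i)--(iv) are established, Theorem \ref{oneway} yields the corollary.
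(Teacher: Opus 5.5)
Your proof is correct. For (i)--(iii) it runs as in the paper: conjugacy separability for (i); Lennox--Wilson closedness of double cosets for (iii); and for (ii), deciding the order of $b$ and then testing whether $\langle b\rangle\cap\gamma^{-1}H\gamma$ (the paper's $H\cap\langle\gamma b\gamma^{-1}\rangle$) is trivial from a computed generating set.

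There is one small slip in (ii). Your sentence ``it suffices to decide whether $\langle b\rangle\cap H$ is nontrivial'' drops the finite-order case you had just identified. You should state that the order problem is solvable in polycyclic groups and apply it first, as the paper does.

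You are also right to invoke Theorem \ref{oneway}. The paper cites Theorem \ref{conditions}, whose hypothesis that $A$ has infinitely many conjugacy classes is not part of the corollary, although only its sufficiency half is actually used.

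The genuine difference is in (iv).
\begin{itemize}
\item The paper stays inside $B$. Via Remark \ref{SS} it writes each term as $[x_iH_i\cap C_B(\beta)]\langle\beta\rangle$. It uses Lemma \ref{AM} to decide whether $x_iH_i\cap C_B(\beta)$ is empty, and otherwise writes it as $w_iK_i$ with $K_i=H_i\cap C_B(\beta)$. It then notes that $L_i=K_i\langle\beta\rangle$ is a subgroup because $K_i$ centralises $\beta$. So the Solution Set becomes the finite coset intersection $\bigcap_i w_iL_i$.
\item You instead move the exponents into extra coordinates. Each condition becomes the preimage of a coset of $\alpha_i^{-1}H\alpha_i$ under the homomorphism $(c,m_1,\dots,m_n)\mapsto c\beta^{-m_i}$ on $C_B(\beta)\times\mathbb{Z}^n$. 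You then finish with the same coset-intersection test.
\end{itemize}

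Your version is more mechanical: the fact that $K_i\langle\beta\rangle$ is a subgroup comes for free from the homomorphism. The cost is slightly more machinery, namely computing preimages of subgroups. This can be done, for example, by intersecting the graph of the map with $(C_B(\beta)\times\mathbb{Z}^n)\times\alpha_i^{-1}H\alpha_i$ and projecting. The paper's route needs only subgroup intersections and Lemma \ref{AM}.

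Two points are common to both routes:
\begin{itemize}
\item The per-term emptiness test is membership in a double coset of $C_B(\beta)$ and a conjugate of $H$. This is not literally an instance of (iii), but your general Lennox--Wilson argument covers it.
\item Both need a finite generating set for $C_B(\beta)$. You cite this explicitly; the paper leaves it implicit.
\end{itemize}
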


In particular, as all finitely generated abelian groups are polycyclic:

\bigskip

\begin{cor}

If $A$ is a recursively presented group in which the conjugacy problem is solvable and $B$ is a finitely generated abelian group that acts on a set $S$ transitively, then the conjugacy problem is solvable in $A \wr_S B$.  

\end{cor}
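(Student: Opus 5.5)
The plan is to check the four hypotheses of Theorem \ref{oneway} directly, using that $B$ is finitely generated abelian and the action on $S=B/H$ is transitive. Hypothesis (i) holds because the conjugacy problem in $A$ is given, and conjugacy in an abelian group is equality, which is decidable: a finitely generated abelian group is finitely presented, and its word problem is solved by reducing to the normal form $\Z^r \oplus T$ with $T$ finite. One could simply quote Corollary \ref{polythe}. Instead I would argue directly, because commutativity makes every step transparent and does not require $A$ to be abelian.

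\textbf{Hypothesis (iii).} Every subgroup of $B$ is normal and finitely generated, so $H\gamma\langle\beta\rangle=\gamma H\langle\beta\rangle$, and $H\langle\beta\rangle$ is a subgroup with known finite generating set (generators of $H$ together with $\beta$). We can compute generators for $H$ because $H$ is a subgroup of a finitely generated abelian group; I take $H$ to be given by finitely many generators. Membership of $g$ in $\gamma H\langle\beta\rangle$ is then membership of $\gamma^{-1}g$ in a finitely generated subgroup of $\Z^r\oplus T$. This is decided by Smith normal form / integer linear algebra.

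\textbf{Hypothesis (ii).} The $b$-orbit of $s_0$ has size $[H\langle b\rangle:H]$, which equals the order of $bH$ in $B/H$. Because $B$ is abelian, every orbit has this same size, since $H$ is the stabiliser of every point. The quotient $B/H$ is again a finitely generated abelian group that we can compute explicitly. Deciding whether $bH$ has finite order is then a computation: bring $B/H$ to normal form and check whether the free part of $bH$ vanishes.

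\textbf{Hypothesis (iv).} Here $C_B(\beta)=B$. Each set $\alpha_i^{-1}H\gamma_i\langle\beta\rangle$ is a coset $c_iK$ of the single subgroup $K=H\langle\beta\rangle$, with $c_i=\alpha_i^{-1}\gamma_i$. An intersection of cosets of one subgroup is either empty or equal to that coset. So the intersection is nonempty if and only if all $c_i$ lie in the same coset, that is, $c_ic_1^{-1}\in K$ for every $i$. These are finitely many membership tests, each of the kind already decided for (iii).

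Theorem \ref{oneway} then gives the result. The main point needing care is effectivity: that $H$ comes with a finite generating set and that membership in $B/H$ and its torsion are computable. For finitely generated abelian groups this reduces to standard integer matrix algorithms, so I expect no genuine obstacle.
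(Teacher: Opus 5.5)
Your argument is correct, but it takes a different route from the paper. The paper gives no separate proof. It notes that finitely generated abelian groups are polycyclic and reads the statement off Corollary \ref{polythe}. That proof checks the hypotheses of the main theorem with heavier tools:
\begin{itemize}
\item for (i), Mostowski's theorem via conjugacy separability;
\item for (ii), algorithms for element orders and for generators of $H\cap\langle gbg^{-1}\rangle$;
\item for (iii), profinite closedness of double cosets (Mal'cev, Lennox--Wilson);
\item for (iv), an iterative rewriting of each $x_iH_i\cap C_B(\beta)$ as a coset $w_iL_i$, followed by successive coset intersections.
\end{itemize}

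You instead use commutativity directly. Since $H$ is normal, it stabilises every point, so every $b$-orbit has size equal to the order of $bH$ in $B/H$. Also $C_B(\beta)=B$. Finally, every $\alpha_i^{-1}H\gamma_i\langle\beta\rangle$ is a coset of the single subgroup $H\langle\beta\rangle$. Emptiness of the Solution Set therefore reduces to finitely many membership tests, all decided by Smith normal form.

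The paper's route buys brevity and covers a much larger class of $B$. Yours is self-contained and elementary, and avoids both the separability results and the iterated intersection. It also correctly invokes Theorem \ref{oneway}, which needs no hypothesis on the conjugacy classes of $A$; the polycyclic proof cites Theorem \ref{conditions} instead.

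You also make explicit the effectivity assumption that $H$ comes with a finite generating set, which the paper uses silently. The right justification is not that generators can be ``computed'' from the action. It is that a finite generating set exists and can be hard-wired into the algorithm, which is all that solvability for a fixed group requires.
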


Lastly:

\bigskip

\begin{cor} \label{hypthe}

If $A$ is a recursively presented group in which the conjugacy problem is solvable and $B$ is a hyperbolic group that acts on a set $S$ transitively, with $S = B/H$ and $H \leq B$ is quasi-convex then the conjugacy problem is solvable in $A \wr_S B$.

\end{cor}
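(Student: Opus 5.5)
The plan is to apply Theorem \ref{oneway} directly and check its four conditions for a hyperbolic group $B$ and a quasi-convex subgroup $H$.

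\textbf{Condition (i).} The conjugacy problem in $A$ holds by hypothesis. In $B$ it holds because hyperbolic groups have a solvable conjugacy problem.

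\textbf{Condition (ii).} I would show that the orbit order problem is solvable. An element $b$ of finite order has finite orbits, and its order can be detected because hyperbolic groups have finitely many conjugacy classes of torsion elements with computable representatives. If $b$ has infinite order, the $b$-orbit of $Hg$ is finite exactly when some $b^k$ with $k \neq 0$ lies in $g^{-1}Hg$. The conjugate $g^{-1}Hg$ is again quasi-convex, so this reduces to deciding whether $\langle b \rangle \cap H^{g}$ is nontrivial. The intersection of two quasi-convex subgroups is quasi-convex and computable, for instance via the Kapovich algorithms. Equivalently, one decides whether $b^{\pm\infty}$ lie in the limit set of $H^g$. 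Since the orbit of $s_0$ under $b$ and of an arbitrary point differ only by this conjugation, and we need finiteness of the orbit of every point of the support, this is decidable.

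\textbf{Condition (iii).} Membership in $H\gamma\langle\beta\rangle$ means deciding whether $g\beta^{-k} \in H\gamma$ for some $k \in \mathbb{Z}$. Here I would use the fact that double cosets $HxK$ of quasi-convex subgroups of hyperbolic groups have solvable membership. Cyclic subgroups are quasi-convex, so $H\gamma\langle\beta\rangle = \gamma(\gamma^{-1}H\gamma)\langle\beta\rangle$ is a translate of a product of two quasi-convex subgroups. Membership in such products is decidable, because a geodesic for $g$ fellow-travels a concatenation of quasi-geodesics in the two subgroups. This gives a bounded search.

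\textbf{Condition (iv).} I expect this to be the main obstacle. The set $C_B(\beta)$ is a quasi-convex subgroup, virtually cyclic when $\beta$ has infinite order and finite otherwise. Each set $\alpha_i^{-1}H\gamma_i\langle\beta\rangle$ is a translate of a product of quasi-convex subgroups. I would argue that finite intersections of such double-coset-like sets with a quasi-convex subgroup are either empty or contain an element of length bounded by a computable constant. This follows from the fact that translates of quasi-convex sets that intersect must do so near a computable point, a standard bounded coset penetration or fellow-travelling argument. A bounded search then decides emptiness. When $\beta$ is torsion, $\langle\beta\rangle$ is finite, so each set is a finite union of cosets $\alpha_i^{-1}H\gamma_i\beta^j$. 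The problem reduces to intersecting finitely many cosets of conjugates of $H$ with the finite group $C_B(\beta)$, which is clearly decidable.

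With all four conditions verified, Theorem \ref{oneway} gives the conclusion.
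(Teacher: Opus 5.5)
Your checks of conditions (i)--(iii) are essentially the paper's. Applying Theorem~\ref{oneway} rather than Theorem~\ref{conditions} is fine, and slightly more accurate, since $A$ is not assumed to have infinitely many conjugacy classes. However, condition (iv), which you rightly call the crux, is not established.

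\textbf{Torsion case.} Your explicit argument rests on a false fact: centralisers of torsion elements in hyperbolic groups need not be finite. For example, in the hyperbolic group $F_2\times\mathbb{Z}/2$ the central involution $\beta$ has $C_B(\beta)=B$; the same happens for $-I$ in $SL_2(\mathbb{Z})$. Splitting each $\alpha_i^{-1}H\gamma_i\langle\beta\rangle$ into the finitely many cosets $\alpha_i^{-1}H\gamma_i\beta^{j_i}$ is correct, but you cannot then finish by a finite search. What is needed, and what the paper does, is the following. For each tuple $(j_i)$, the set $\bigcap_i \alpha_i^{-1}H\gamma_i\beta^{j_i}$ is either empty or a right coset of the quasi-convex subgroup $\bigcap_i \alpha_i^{-1}H\alpha_i$, since $\alpha_i^{-1}H\gamma_i\beta^{j_i}=(\alpha_i^{-1}H\alpha_i)\alpha_i^{-1}\gamma_i\beta^{j_i}$. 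One then decides whether this coset meets the possibly infinite quasi-convex subgroup $C_B(\beta)$, using decidability of coset intersection for quasi-convex subgroups.

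\textbf{Infinite-order case.} Your general claim is asserted rather than proved. You claim that a nonempty intersection of the sets $\alpha_i^{-1}H\gamma_i\langle\beta\rangle$ with a quasi-convex subgroup contains an element of computably bounded length, by fellow-travelling. This is not the standard bounded-coset-penetration fact. These sets are double cosets, not cosets, and the subgroups $\alpha_i^{-1}H\alpha_i$ vary with the input. The usual neighbourhood-intersection constants are also not obviously computable.

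You do not need that claim. You already observed that $C_B(\beta)$ is virtually cyclic; the missing step is to use it.
\begin{itemize}
\item $\langle\beta\rangle$ has finite index in $C_B(\beta)$, with a computable transversal $\{\rho_l\}$.
\item Each $\alpha_i^{-1}H\gamma_i\langle\beta\rangle$ is invariant under right multiplication by $\langle\beta\rangle$, so $d=\rho_l\beta^k$ lies in the Solution Set if and only if $\rho_l$ does.
\item Hence the Solution Set is nonempty if and only if some $\rho_l$ lies in every $\alpha_i^{-1}H\gamma_i\langle\beta\rangle$.
\end{itemize}
That is finitely many instances of the membership problem in (iii), which you have already solved.
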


\section{Preliminaries}

We start with some terminology.  Throughout this paper $A$ and $B$ are non-trivial groups and we use $1_A$ and $1_B$ for their respective identities.

\bigskip

\begin{definition}

Let $A$ and $B$ be groups and let $B$ act on a set $S$.  Then:

\[ A^S = \{f: S \rightarrow A\}\]

where, if $f, g \in A^S$ then $fg (s) = f(s)g(s)$ for all $s \in S$.  

\end{definition}

We use $1 = (1_A, 1_A \ldots)$ for the identity in $A^S$.

\bigskip

\begin{definition}

Let $f \in A^S$.  The support of $f$, $\sigma(f) = \{s \in S \: | \: f(s) \neq 1_A \}$
    
\end{definition}

\bigskip

\begin{definition}

Let $A$ be a group and $S$ be a set.  Then $A^{(S)} = \{f: S \rightarrow A\}$ with the additional condition that $\sigma(f)$ is finite.  So $A^{(S)} \subseteq A^S$.

\end{definition}

\bigskip

\begin{definition}

$f^b(s) = f(sb^{-1})$ for all $s \in S$, $ f \in A^S$ and $b \in B$. 

\end{definition}

We note that we use $f(sb^{-1})$, rather than $f(sb)$, because this is the element of $f$ that is brought to position $s$ by $b$.  In other words, as this is a right action, the action is associative with:

\[(f^b)^c(s) = f(sc^{-1}b^{-1}) = f^{bc}(s).\]

We now define the various wreath products and specify the terminology that we will use for each.

In the standard wreath product $B$ acts on itself by right multiplication.

\bigskip

\begin{definition} 

The \textit{standard unrestricted wreath product} is: 
\[\AwrB = A^B \rtimes B = \{ (b,f) \:| \: b \in B, f \in A^B \}\]
where $B$ acts on itself by right multiplication and $(b,f)(c,g) = (bc, f^cg)$ for all $b,c \in B$ and $f,g \in A^B$.

\end{definition}

\bigskip

\begin{definition} 

 The \textit{standard restricted wreath product} is: 
 
 \[A \wr B = A^{(B)} \rtimes B = \{ (b,f) \: | \: b \in B, f \in A^{(B)} \}\]
 
where $|\sigma(f)|$ is finite, $B$ acts on itself by right multiplication and $(b,f)(c,g) = (bc, f^cg)$ for all $b,c \in B$ and $f,g \in A^{(B)}$.

\end{definition}

\bigskip

\begin{definition} 

The \textit{permutational unrestricted wreath product} is: 

\[A \operatorname{wr}_S B  =A^S \rtimes B = \{ (b,f) \:| \: b \in B, f \in A^S \}\]

where $B$ acts on a set $S$ and $(b,f)(c,g) = (bc, f^cg)$ for all $b,c \in B$ and $f,g \in A^S$.

\end{definition}

\bigskip

\begin{definition} 

The \textit{permutational restricted wreath product}:

\[A \wr_S B = A^{(S)} \rtimes B =  \{ (b,f) \: | \: b \in B, f \in A^{(S)} \}  \]

where $|\sigma(f)|$ is finite, $B$ acts on $S$ and $(b,f)(c,g) = (bc, f^cg)$ for all $b,c \in B$ and $f,g \in A^{(S)}$.
\end{definition}

When $B$ acts on a set $S$, let $H$ be the stabiliser of $s_0$, an arbitrary element of $S$, so:

\[H = \hbox{Stab}_B(s_0) = \{ b \in B \:| \: s_0b = s_0 \} \]

We note that if the action of $B$ on $S$ is transitive:

\[S  \simeq B/H = \{ H\tau_1, H\tau_2 \ldots \}\]

for some $\tau_1, \tau_2 \ldots \in B$.

\bigskip

Lastly, the following notation will be helpful.

\bigskip

\begin{definition} \label{a^s}

Let $a^{(s)} \in A^{(S)}$ be such that $a^{(s)}(s) = a$ and $a^{(s)}(t) = 1_A$ if $t \neq s$.
    
\end{definition}

We note that:

\[b^{-1}a^{(s_0)}b = a^{(s_0 b)} .\]

\section{Presentation of Wreath Products} \label{pres}

  We now consider the group presentation of unrestricted wreath products.  This section is based on the work of Yves de Cornulier in \cite{C3}.

\bigskip

\begin{definition}

For two elements $g,h \in G$, $[g,h] =g^{-1}h^{-1}gh$ is the commutator.

\end{definition}

In what follows we assume that $A$ and $B$ are both Class $\mathcal{C}$ and therefore recursively presented groups, so:

\[A = \langle a_1 \ldots a_m \: | \: r(a_1 \ldots a_m)=1_A \: \forall r \in R \rangle \]

and: 

\[B = \langle b_1 \ldots b_n \: | \: t(b_1 \ldots b_n)=1_B \: \forall t \in T \rangle \]  

where $n$ and $m$ are finite and $R$ and $T$ are the sets of relators on $A$ and $B$ respectively. 

\bigskip

\begin{lem} \label{pres2}

If both $A$ and $B$ are Class $\mathcal{C}$, the action of $B$ on $S$ is transitive, and we can solve the membership problem for $H$, then $A \operatorname{wr}_S B$ may be recursively presented, with the presentation:

\[A \operatorname{wr}_S B =  \langle A, B \: | \: [H,A] = 1_A, [A, A^b] = 1_A \; \forall b \in B - H \rangle . \]

In particular, in $A \operatorname{wr} B$, where $H$ is trivial:

\[A \operatorname{wr} B = \langle A, B \: | \: [A, A^b] = 1_A \; \forall b \in B, \:b \neq 1_B \rangle .\]

\end{lem}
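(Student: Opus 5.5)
The plan is to identify the group $G$ defined by the displayed presentation with the permutational restricted wreath product $A^{(S)} \rtimes B$. The group given by these relations is necessarily the restricted product, since $A$ and $B$ generate a subgroup in which every element has finite support; I read the symbol $A \operatorname{wr}_S B$ in the statement in that sense. Let
\[G = \langle A, B \mid [H,A]=1,\ [A,A^b]=1 \ \forall b \in B-H \rangle,\]
where ``$\langle A,B \mid \ldots\rangle$'' means the generators $a_1,\ldots,a_m,b_1,\ldots,b_n$ subject to the relators $R$ and $T$ together with the listed commutators. I will do three things in order: construct a homomorphism $\varphi: G \to A \wr_S B$, show that $\varphi$ is bijective, and check that the relator set can be recursively enumerated.

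\emph{Step 1: the homomorphism.} Define $\varphi$ by $a \mapsto a^{(s_0)}$ and $b \mapsto b$. The relators of $A$ and of $B$ are clearly respected. For $h \in H$ we have $h^{-1}a^{(s_0)}h = a^{(s_0h)} = a^{(s_0)}$, so $[H,A]$ maps to $1$. For $b \notin H$, every element of $A^b$ maps to a function supported on $s_0b \neq s_0$. Functions with disjoint supports commute, so $[A,A^b]$ maps to $1$.

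\emph{Step 1 continued: surjectivity.} By transitivity every $s \in S$ can be written $s = s_0b$ for some $b \in B$. Then $a^{(s)} = b^{-1}a^{(s_0)}b$ lies in the image of $\varphi$. The elements $a^{(s)}$ generate $A^{(S)}$, and together with $B$ they generate the whole semidirect product.

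\emph{Step 2: injectivity, which I expect to be the main work.} Let $N$ be the normal closure of $A$ in $G$. Since $G$ is generated by $A$ and $B$, $N$ is generated by the conjugates $A^b$ with $b \in B$, and $G = NB$.
\begin{enumerate}[(a)]
\item The relation $[H,A]=1$ gives $A^{hb} = A^b$. Hence the subgroup $A_s := A^b$ depends only on the point $s = s_0 b$.
\item For $s \neq t$, the subgroups $A_s$ and $A_t$ commute. Indeed, conjugating $[A, A^{b'b^{-1}}]=1$ by $b$ shows this, because $b'b^{-1} \notin H$ exactly when $s_0 b \neq s_0 b'$.
\item It follows that there is a surjection $\theta: \bigoplus_{s\in S} A \to N$, sending the $s$-th copy of $a$ to $a^b$ where $s = s_0 b$. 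This uses the universal property of the restricted direct sum as a coproduct of pairwise commuting factors.
\item The composite $\varphi \circ \theta$ is the identity on $A^{(S)}$, since it sends the $s$-th copy of $a$ to $a^{(s)}$. Therefore $\theta$ is injective, and $\varphi|_N$ is an isomorphism onto $A^{(S)}$.
\item Since $\varphi|_B$ is the identity on $B$, we get $N \cap B = 1$, and so $G = N \rtimes B$.
\end{enumerate}
Now suppose $\varphi(nb) = 1$ with $n \in N$ and $b \in B$. Projecting to $B$ gives $b = 1$, and then $n = 1$ by (d). So $\varphi$ is injective. The delicate point throughout is verifying that the conjugation action of $B$ on $N$ corresponds under $\theta$ to the permutation action on $S$; this follows from (a).

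\emph{Step 3: recursive enumerability.} The generating set $\{a_1,\ldots,a_m,b_1,\ldots,b_n\}$ is finite, and the sets $R$ and $T$ are recursively enumerable. To produce the remaining relators, enumerate all words $w$ in the $b_j$ and use the solvable membership problem for $H$ to decide whether $w \in H$.
\begin{itemize}
\item If $w \in H$, output the relators $[w, a_i]$ for $1 \le i \le m$.
\item Otherwise, output the relators $[a_i, w^{-1}a_jw]$ for $1 \le i,j \le m$.
\end{itemize}
Commutation of generators suffices, because commuting generating sets give commuting subgroups. This yields a recursive enumeration of all the defining relators. Note that only membership in $H$ is used here; solvability of the conjugacy problem in $A$ and $B$ is not needed. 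In the standard case $H = 1$, the first family of relators is empty, and we recover the second displayed presentation.
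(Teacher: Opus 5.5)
Your proof is correct, and it is more complete than the paper's.

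\textbf{What the paper does.} Its argument amounts to your Step~1. It checks, inside the wreath product, that $H$ centralises $A^{(s_0)}$ and that $A^{(s_0)}$ commutes with its $b$-conjugates for $b \notin H$. It then asserts that these relations, together with the relators of $A$ and $B$, are ``therefore'' the defining relations. Implicitly it relies on Cornulier's result; on its own it only yields a homomorphism from the presented group onto the wreath product.

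\textbf{What your Step~2 adds.} It proves the missing half, that no further relations are needed:
\begin{itemize}
\item $N$ is generated by the conjugates $A^b$ with $b \in B$.
\item The relation $[H,A]=1$ makes $a \mapsto a^b$ depend only on $s_0 b$. You state this for subgroups in (a), but (c) needs it elementwise; the same computation $a^{hb}=(h^{-1}ah)^b=a^b$ gives that.
\item The relations $[A,A^c]=1$ for $c\notin H$ make distinct $A_s$ commute.
\item $\varphi\circ\theta=\mathrm{id}$ forces $\theta$ to be an isomorphism, so $G=N\rtimes B\cong A^{(S)}\rtimes B$.
\end{itemize}

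\textbf{Two points the paper glosses over.} First, your reading of $A\operatorname{wr}_S B$ as the restricted product is forced. For infinite $S$ the group $A^S\rtimes B$ is uncountable, so it cannot be generated by the finitely generated groups $A$ and $B$; the paper's heading ``unrestricted'' is inaccurate here. Second, your Step~3 correctly shows that recursive enumerability needs only two things: the recursively enumerable relator sets $R$ and $T$, and decidability of membership in $H$. The paper's appeal to the word problem in $A$ plays no real role.

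\textbf{Trade-off.} The paper's version is a short sketch that leaves the substantive content to the literature. Yours is longer but self-contained and actually establishes the isomorphism.
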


\begin{proof}

As the action of $B$ on $S$ is transitive, if $H = Stab_B(s_0)$, $S = \{H \tau_1, H\tau_2,  \ldots \}$ with $\tau_1, \tau_2  \ldots \in B$.  By assumption, for any $b \in B$ we can determine whether or not $b \in H$.

If $b \in H$ then $b H = H b = H $ and so $H$ commutes with $A$.  

If $b \notin H$, for all $a_1, a_2 \in A$ $a_1$ commutes with $b^{-1}a_2b$ so $[A, A^b] = 1_A$.  

The relations in the presentation of $A \operatorname{wr}_S B$ are therefore:

\begin{align*}
r(a_1 \ldots a_m) &= 1_A \; \forall r \in R\\
t(b_1 \ldots b_p) &= 1_B \; \forall t \in T \\
ba  &= ab, \: \forall b \in H, a \in A \\
[a_1, a_2^b] &= 1_A, \: \forall a_1, a_2 \in A, b \in B - H \\
\end{align*}

This gives us the following presentation of $A \operatorname{wr}_S B$: 

\[ \langle A, B \: | \: [H,A] = 1_A, [A, A^b] = 1_A \; \forall b \in B - H \rangle \]

As $A$ is Class $\mathcal{C}$, we can determine whether any element of $A$ is $1_A$.  Therefore the above represents a recursive presentation of $A \operatorname{wr}_S B$.  

In $A \operatorname{wr} B$, $H$ is trivial and so the presentation of $A \operatorname{wr} B$ is: 

\[ \langle A, B \: | \: [A, A^b] = 1_A \; \forall b \in B, \:b \neq 1_B \rangle\]

\end{proof}

We apply the above presentation for $A wr_S B$ in the following Lemma.

\bigskip

\begin{lem} \label{support}

For any $f \in A^{(S)}$ the elements of $\sigma(f)$ are determinable.
    
\end{lem}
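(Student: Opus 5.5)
Since Lemma \ref{support} comes right after the presentation of Lemma \ref{pres2}, the plan is to read an element $f \in A^{(S)}$ as a word $w$ in the generators of $A$ and $B$ and rewrite it effectively into a normal form that displays its support. Here $f$ is the element represented by $w$, taken to lie in the base group. First I will use the relation $b^{-1}a^{(s_0)}b = a^{(s_0 b)}$ recorded after Definition \ref{a^s} to push every $B$-letter of $w$ to the right. This is a finite symbolic computation, and it gives
\[ w = a_1^{(s_0\beta_1)} a_2^{(s_0\beta_2)} \cdots a_k^{(s_0\beta_k)} \, \beta, \]
with $a_j \in A$ and $\beta_j, \beta \in B$ given as explicit words. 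Since $w$ represents an element of the base group, $\beta = 1_B$, so $f$ is a finite product of the elements $a_j^{(s_0\beta_j)}$.

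Next I will group the factors according to the point $s_0\beta_j \in S$ they are based at. Since $S = B/H$, we have $s_0\beta_i = s_0\beta_j$ if and only if $\beta_i\beta_j^{-1} \in H$. I will decide this with the membership problem for $H$, which is exactly the extra hypothesis of Lemma \ref{pres2}; it is also the case $\gamma = 1_B$, $\beta = 1_B$ of condition (iii) of Theorem \ref{oneway}. This splits the indices $\{1,\dots,k\}$ effectively into classes $C_1,\dots,C_r$, one for each distinct point $t_1,\dots,t_r$ of $S$ that occurs. By the relations $[A,A^b]=1_A$ for $b \notin H$, factors based at different points commute. So $f(t_l)$ is the product, in the original order, of the $a_j$ with $j \in C_l$, and $f(s)=1_A$ for every $s$ outside $\{t_1,\dots,t_r\}$.

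Finally I will decide, for each $l$, whether the word $f(t_l)$ equals $1_A$, using the word problem in $A$. This is available because $A$ has a solvable conjugacy problem (it is Class $\mathcal{C}$). Then $\sigma(f) = \{t_l : f(t_l) \neq 1_A\}$, with each point described by a coset representative $H\beta_j$, and $f$ can be written in the normal form $\prod_l f(t_l)^{(t_l)}$ with distinct points $t_l$.

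I expect the main obstacle to be the second step, deciding when two coset representatives name the same point of $S$. Without the membership problem for $H$ the support could not be listed without repetitions, and points could not be compared. I will therefore state explicitly that the lemma is proved under the standing hypotheses of Lemma \ref{pres2}, namely the word problem in $A$ and $B$ and membership in $H$. The rewriting in the first step is routine bookkeeping.
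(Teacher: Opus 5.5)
Your proposal is correct and takes essentially the same route as the paper. Both arguments rewrite $f$, given as a word in the generators of $A$ and $B$, as a product of terms $a_j^{(s_0\beta_j)}$ using $b^{-1}a^{(s_0)}b=a^{(s_0b)}$, then use the membership problem for $H$ to decide which terms sit at the same point of $S$, then use $[A,A^b]=1_A$ for $b\notin H$ to collect them, and finally apply the word problem in $A$. The only difference is cosmetic: you push the $B$-letters to the right rather than to the left, and your bookkeeping of the points $s_0\beta_j$ is cleaner than the paper's displayed computation.
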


\begin{proof}

  Let $\{a_i\} $ and $\{b_j\}$ be the finite set of generators of $A$ and $B$ respectively. As the action of $B$ on $S$ is transitive (and recalling Definition \ref{a^s}), any element of $f$ may be written as:
\begin{align*}
f &= a_{i_1}^{(s_0)}b_{j_1}a_{i_2}^{(s_0)}b_{j_2} \ldots a_{i_l}^{(s_0)}b_{j_l} \\
&= b_{j_1}b_{j_2}(b_{j_2}^{-1}a_{i_1}^{(s_0)}b_{j_2})a_{j_2}^{(s_0)} \ldots a_{i_l}^{(s_0)}b_{j_l} \\
&= b_{j_1}b_{j_2}a_{i_1}^{(s_0 b_{j_2})}b_{j_3}a_{j_2}^{(s_0 b_{j_3})} \ldots a_{i_l}^{(s_0)}b_{j_l} \\
&= b_1b_2 \ldots b_l a_{i_1}^{(s_0b_{j_2} \ldots b_{j_l})} a_{i_2}^{(s_0b_{j_3} \ldots b_{j_l})} \ldots a^{(s_0 b_{j_l})}\
\end{align*}

where we rely on the fact that the membership problem in $H$ is solvable, so it is possible to determine whether $s_0x = s_0y$ for any $x,y \in B$, and if $b \notin H$, then $[A, A^b] = 1_A$. If $s_0x \neq s_0y$ then $a^{(s_0x)}$ and $a^{(s_0y)}$ commute, and as the word problem in $A$ is solvable, it is possible to determine for which $s \in S$ $f(s) \neq 1_A$ and therefore the elements of $\sigma(f)$. 

\end{proof}

\section{Matthews Maps} \label{Part1}

In the remainder of this paper, we deal only with the restricted wreath products $A \wr B$ or $A \wr_S B$.

\subsection{Background}

In \cite{M1} Matthews takes an arbitrary element $bf$ in the standard restricted wreath product $ A \wr B$, divides $B$ into its $\langle b \rangle$ cosets and then maps each coset to a single element of $A$.  We will extend these maps to the permutational restricted wreath product $A \wr_S B$.  It is first worthwhile pausing to discuss the idea behind these maps in a little more detail.

We start with a simple example, by assuming that $B = Sym_5$ and $S = \{1,2,3,4,5\}$.  We take $bf \in A \wr_S B$ with $f=(a_1,a_2,a_3, a_4, a_5) \in A^{(S)}$ and $b = (13)(45)$.  There are therefore three $b$ orbits in $bf$, $O_1=\{1,3\}$ , $O_2 = \{4,5\}$ and $O_3=\{2\}$. 

Let $h = (1_A,1_A, a_3, a_4, 1_A)$ and so $bf' \sim_{A\wr_S B} bf$ where:

\begin{align*}
bf' &= (1_Bh)(bf)(1_Bh)^{-1} \\
&= (b, (a_3a_1, a_2, 1_A, 1_A, a_4a_5)) \\
\end{align*}

In other words, we can conjugate $bf$ by an element of $A^{(S)}$ so that all the elements of each $b$-orbit are concentrated in $f'$ as the entry for one element of the $b$-orbit, with all other entries for that orbit in $f'$ being the identity.

By contrast, if instead we define $h = (a_1,1_A, 1_A, 1_A, a_5)$ then:

\[bf' = (b, (1_A, a_2, a_1a_3, a_5a_4, 1_A )) \]

In the first example above, the non-identity element for $O_1$ is $a_3a_1$ whilst in the second it is $a_1a_3$. From this we can see that, if we conjugate in this way, then, unless $A$ is abelian, the element for an orbit is not unique (as it depends on the choice of conjugator).  

In \cite{M1} Matthews uses this technique to test is $bf \sim_{A \wr B} cg$, by mapping each $\langle b \rangle$ coset to a single element of $A$ (accepting that this element may not be unique) and then using these maps to compare the $\langle b \rangle$ cosets in $f$ with the $\langle c \rangle$ cosets in $g$.

\subsection{Order and Orbit Order} \label{OO}

Matthews defines the maps differently depending on whether the order of $b$ is finite or infinite.  She states that we can determine the order of $b$ because $B$ has a solvable power problem.  We do not think that this is correct.  This is also mentioned in \cite{M2}.

Matthews defines the solvable power problem as the ability to decide for any two elements $x$ and $y$ in $B$ whether there exists an integer $n$ such that $y = x^n$.  This  may not allow us to determine which elements of $B$ have finite order (see Collins Theorem C \cite{C2}). In fact, as will be seen later, the distinction between finite and infinite order does not matter if $A$ is abelian.  We do, however, need to be able to determine whether the order of $b$ is finite if $A$ is not abelian. 
 This additional condition does require an additional Corollary \ref{orderproof}, to complete the necessity part of the proof of our revised version of Theorem \ref{B}. 

 In Lemma $2.5$ \cite{C2}, Collins gives an example of a finitely presented group $G$ with a solvable power problem but an unsolvable order problem (based on Example 1 \cite{M7} and relying on the existence of an injective map $\psi: \Z \rightarrow \Z$ (see \cite{B4}) where we can determine $\psi(n)$ but we cannot determine whether there exists an $m$ such that $\psi(m) = n$). If $A$ is a non-abelian group, then $A \wr G$ has an unsolvable conjugacy problem. 

Finally, we note that, if in the definition of the solvable power problem Matthews had required $n \neq 0$ then we could determine whether the order of $x$ was finite by setting $y =1_B$. This would not, however, be a sufficient change to Theorem B \cite{M1} (which is if and only if) because, as can be seen from Corollary \ref{orderproof} below, the condition for $n \neq 0$ is not required if $A$ is abelian. 

In $A \wr_S B$, the equivalent assumption is that we can solve the orbit order problem, by which we mean we can determine whether the number of elements of $S$ in a $b$-orbit $O_i$, which we will denote $|O_i|$, is finite or infinite. 

We note that each $b$-orbit $O_i$ can be expressed as the double coset $Ht_i\langle b \rangle$ for some $t_i \in B$ and that $H t_i \langle b \rangle = \bigcup_{i \in \Z} H t b^i$.  Then $|O_i| = |\{H t_i \langle b \rangle \} |$ is the number of right $H$ cosets in $\{Ht_i\langle b \rangle\}$).

\bigskip

\begin{lem} \label{OOint}

$|H t \langle b \rangle |$ is infinite if and only if $H \cap \langle tb t^{-1} \rangle = \{1_B\}$.  

\end{lem}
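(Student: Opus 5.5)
Here $|Ht\langle b\rangle|$ means the number of right $H$-cosets $Htb^i$, $i \in \Z$. I will compute this number directly by asking when two of these cosets coincide. For integers $i,j$ we have $Htb^i = Htb^j$ exactly when $tb^{i-j}t^{-1} \in H$, since $Htb^i = Htb^j \iff tb^i (tb^j)^{-1} \in H \iff tb^{i-j}t^{-1}\in H$. Also $tb^kt^{-1} = (tbt^{-1})^k$. So if $K=\{k\in\Z : (tbt^{-1})^k\in H\}$, the cosets $Htb^i$ are in bijection with $\Z/K$. This set $K$ is a subgroup of $\Z$, being the preimage of $H$ under the homomorphism $\Z \to B$, $k\mapsto (tbt^{-1})^k$.

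The subgroup $K$ is either $\{0\}$ or $n\Z$ for some $n\geq 1$. If $K=n\Z$ with $n\geq 1$, there are exactly $n$ distinct cosets, namely $Ht, Htb, \ldots, Htb^{n-1}$, so the orbit is finite. If $K=\{0\}$, the cosets $Htb^i$ are pairwise distinct, so the orbit is infinite. Hence $|Ht\langle b\rangle|$ is infinite if and only if $(tbt^{-1})^k\notin H$ for every $k\neq 0$.

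It remains to compare this with the statement. If $tbt^{-1}$ has infinite order, then "$(tbt^{-1})^k\notin H$ for all $k\neq 0$" is exactly $H\cap\langle tbt^{-1}\rangle=\{1_B\}$, and the lemma follows. The main obstacle is the case where $b$ has finite order $m$. Then $K \supseteq m\Z$, so the orbit has at most $m$ elements and is always finite, even when $H\cap\langle tbt^{-1}\rangle=\{1_B\}$. The condition as literally stated fails here: take $H$ trivial and $b\neq 1_B$ of finite order. I would therefore prove the lemma in the corrected form: $|Ht\langle b\rangle|$ is infinite if and only if $(tbt^{-1})^k\notin H$ for all $k\neq 0$. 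Equivalently, $b$ has infinite order and $H\cap\langle tbt^{-1}\rangle=\{1_B\}$. Alternatively, I would note that the statement as written holds under the standing assumption that $b$ has infinite order. Under that hypothesis the two formulations agree, and the proof reduces to the coset computation above.
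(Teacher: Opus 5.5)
Your proof is correct and rests on the same observation as the paper's, that $Htb^{j}=Ht$ exactly when $tb^{j}t^{-1}\in H$, but you carry it through where the paper does not. The paper's one-line argument writes ``for some $j\in\Z$'' where it needs $j\neq 0$. It also only concludes that an infinite orbit forces $H\cap\langle tbt^{-1}\rangle=\{1_B\}$. The converse is never argued, and, as you observe, it is false when $b$ has finite order. Your example with $H=\{1_B\}$ and $b$ a nontrivial torsion element works, and so does $b=1_B$ with any $H$. Identifying the set of cosets $Htb^{i}$ with $\Z/K$, where $K$ is the preimage of $H$ under $k\mapsto(tbt^{-1})^{k}$, gives both directions at once and the right criterion. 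The orbit is infinite if and only if $(tbt^{-1})^{k}\notin H$ for every $k\neq 0$, i.e.\ if and only if $b$ has infinite order and $H\cap\langle tbt^{-1}\rangle=\{1_B\}$.

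This corrected version is the one the paper actually relies on. In the polycyclic and hyperbolic corollaries the lemma is invoked only after checking that $b$ has infinite order. Moreover, for $H=\{1_B\}$ the literal statement would make every coset $t\langle b\rangle$ infinite. That is incompatible with the proof of Theorem~\ref{B}, which identifies the orbit order problem with the order problem in $B$. For the same reason, the sentence following the lemma should say that the orbit order problem asks whether $\beta^{k}\in H$ for some $k\neq 0$, not merely whether $H\cap\langle\beta\rangle=\{1_B\}$.
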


\begin{proof}

$|H t \langle b \rangle |$ is finite if and only if for some $j \in \Z$, $Htb^j = Ht$ or $Htb^jt^{-1} = H$.  Therefore, as cosets partition a group, if $Ht\langle b \rangle$ is infinite, $H \cap \langle t b t^{-1} \rangle = \{1_B\}$.  

\end{proof}

Therefore the orbit order problem is equivalent to determining for any $\beta \in B$ whether $H \cap \langle \beta \rangle =\{1_B\}$. 

\subsection{Defining the Matthews Maps}

We now define the Matthew Maps more carefully and at the same time extend them to $A\wr_SB$, making the following assumptions: 

\begin{enumerate}[(i)]
    \item $A$ and $B$ are non-trivial recursively presented groups; 
    \item $B$ acts on a set $S = B/H$ transitively;
    \item the conjugacy problem is solvable in $A$ and in $B$; and
    \item either $A$ is abelian or the orbit order problem is solvable in $B$;
    \item the membership problem for $H \gamma \langle \beta \rangle$ (for any $\gamma, \beta \in B$) is solvable.     
    \end{enumerate}

    We note that, if the membership problem for $H \gamma \langle \beta \rangle$ is solvable, by setting $\gamma = \beta = 1_B$, the membership problem for $H$ is solvable and, if $\beta \tau^{-1} \in H$ then $\beta \in H\tau$.

    \bigskip
    
\begin{lem} \label{sameorbit}

For any two points in $s_1, s_2 \in S$ if we can solve the membership problem for $H \gamma \langle b \rangle$ we can determine if $s_1$ and $s_2$ are in the same $b$-orbit.

\end{lem}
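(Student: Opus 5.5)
The plan is to turn the question ``are $s_1$ and $s_2$ in the same $b$-orbit?'' into a single membership query for a double coset $H\gamma\langle b\rangle$. Since the action of $B$ on $S$ is transitive and $S = B/H$, each point of $S$ is a right coset of $H$. So I first write $s_1 = H\tau_1$ and $s_2 = H\tau_2$ for some $\tau_1, \tau_2 \in B$. In practice the points arise this way: a point of the support appears as $s_0x$ for an explicit word $x$ in the generators of $B$, as in the proof of Lemma \ref{support}. Hence $\tau_1$ and $\tau_2$ can be taken as given words.

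Next I would check the basic equivalence: $s_1$ and $s_2$ lie in the same $b$-orbit if and only if $H\tau_2 = H\tau_1 b^j$ for some $j \in \Z$. This holds because the right action of $b^j$ sends $H\tau_1$ to $H\tau_1 b^j$. The condition $H\tau_2 = H\tau_1 b^j$ is equivalent to $\tau_2 \in H\tau_1 b^j$. Taking the union over $j$, the $b$-orbit of $s_1$ corresponds exactly to the double coset
\[H\tau_1\langle b\rangle = \bigcup_{j\in\Z} H\tau_1 b^j,\]
as already noted before Lemma \ref{OOint}. So $s_1$ and $s_2$ are in the same $b$-orbit if and only if $\tau_2 \in H\tau_1\langle b\rangle$.

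Finally I apply the hypothesis with $\gamma = \tau_1$ and $\beta = b$. Solvability of the membership problem for $H\gamma\langle\beta\rangle$ lets us decide whether $\tau_2 \in H\tau_1\langle b\rangle$, and hence whether $s_1, s_2$ share a $b$-orbit.

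The only point needing care is that the answer must not depend on the choice of coset representatives. If $\tau_1$ is replaced by $h\tau_1$ with $h\in H$, the double coset $H\tau_1\langle b\rangle$ does not change. If $\tau_2$ is replaced by $h'\tau_2$, membership is unchanged, because $H\tau_1\langle b\rangle$ is a union of right $H$-cosets, so it is closed under left multiplication by $H$. Beyond this, the argument is a direct unwinding of the definitions, and I expect no real obstacle.
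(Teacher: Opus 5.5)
Your proposal is correct and is essentially the paper's proof: write $s_1 = H\tau_1$ and $s_2 = H\tau_2$, note that they share a $b$-orbit exactly when $\tau_2 \in H\tau_1\langle b\rangle$, and apply the membership hypothesis with $\gamma = \tau_1$. Your check that the answer does not depend on the choice of coset representatives, and your remark that the points are given as words $s_0x$, are small additions that the paper leaves implicit.
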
  

\begin{proof}    
    
As $S = B/H$ let $s_1 = Ht_1$ and $s_2 = Ht_2$, $t_1, t_2 \in B$.  By assumption, the membership problem for $H \gamma \langle b \rangle$ for any $\gamma \in B$ is solvable. Therefore we can determine if $t_2 \in Ht_1\langle b \rangle$ and therefore whether $s_1$ and $s_2$ are in the same $b$-orbit, such that there exists $m,n \in \Z$ such that $s_1b^m = s_2b^n$.  

\end{proof}

For any $f \in A^{(s)}$, by Lemma \ref{support} we can determine the elements of $\sigma(f)$ and by Lemma \ref{sameorbit} we can partition $\sigma(f) \subseteq B/H$ into $b$-orbits.  We will now take $\{O_i\}$ to be the set of $b$-orbits that contain an element of $\sigma(f)$, which we will refer to as the non-trivial $b$-orbits.  Let $T = \{t_i\}$ being a set of representatives of $\{O_i\}$, where each $t_i \in B$.  In other words, we partition $\sigma(f)$ into its $b$-orbits and take one representative from each non-trivial $b$-orbit so that any element of $\sigma(f)$ may be expressed uniquely as $s_0t_i b^j $ for some $t_i \in T$ and some $j \in \Z$.

Having partitioned $\sigma(f)$ into the $b$-orbits $O_i$ each with representative $t_i$, each  $\beta \in B$ may be associated with a class of maps $A^{(S)} \rightarrow A$, $O_i \mapsto f_i^{*\beta}$ where $f_i^{* \beta}$ is defined below.  

We first note that here we are relying on the fact that $\sigma(f)$ is finite for two reasons.  First, as $\sigma(f)$ is finite,  $\{O_i\}$ is a finite set.  Secondly, for any $f$ and $\beta$ there exist integers $M$ and $N$ (with $N \geq 0 $) such that if $j < M$ or $j \geq M+N$ then $f(s_0t_ib^j\beta^{-1})= 1_A$. If $A$ is abelian, by our assumptions we may not know whether the $b$-orbit is finite or infinite.  We can, however, test whether $s_0t_ib^j = s_0t_i$ for all $j \leq N$ and therefore determine whether $|O_i|$ is less than or equal to $N$ and, if it is, to determine $|O_i|$. We also rely on this in Lemma \ref{inf1} below.

\bigskip
  
\begin{definition}

We define $f_i^{*\beta} $ as follows:

\begin{enumerate}[(i)]
\item if $A$ is not abelian and $|O_i| = n$ finite: 
\[ f_i^{*\beta} = \displaystyle \prod_{j=0}^{n-1} f( s_0t_i b^j \beta ^{-1}) \in A\]
\item if $A$ is not abelian and $|O_i|$ is infinite:
\[ f_i^{*\beta} = \displaystyle \prod_{j=M}^{M+N} f(s_0t_i b^j \beta ^{-1}) \in A\]
\item if $A$ is abelian and $|O_i| = n\leq N$:
\[ f_i^{*\beta} = \displaystyle \prod_{j=0}^{n-1} f( s_0t_i b^j \beta ^{-1}) \in A\]
\item if $A$ is abelian and $|O_i| > N$ (and may be finite or infinite):
\[ f_i^{*\beta} = \displaystyle \prod_{j=M}^{M+N} f(s_0t_i b^j \beta ^{-1}) \in A\].
\end{enumerate}

\end{definition}

We note that, if we are in (i) above, then $f_i^{*\beta}$ depends on the choice of representative $t_i$ for the orbit $O_i$, as a different choice would have cyclically permuted the factors in the product. If $A$ is abelian or $|O_i|$ is infinite, $f_i^{*\beta}$ does not depend on the choice of representative.

In \cite{M1}, Matthews does not give any background to the reasons behind this choice of maps, but the following Lemma may be insightful. 

\bigskip

\begin{lem} \label{RSC}

Let $bf \in A \wr_S B$.  Then it is possible to find $\bar{f} \in A^{(S)}$ such that $bf \sim b\bar{f}$ and $\sigma(\bar{f})$ contains one element $s_i$ in each non-trivial $b$-orbit $O_i$, with $\bar{f}(s_i) = f_i^*$.
        
\end{lem}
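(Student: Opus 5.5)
We will conjugate $bf$ by an element $h \in A^{(S)}$, working one non-trivial $b$-orbit at a time. This is enough because an element of $A^{(S)}$ supported on a single $b$-orbit $O_i$ conjugates $bf$ into $b f'$ where $f'$ differs from $f$ only on $O_i$. Following the example in the Background subsection, we compute how conjugation by $1_B h$ acts. From the multiplication rule $(b,f)(c,g)=(bc,f^cg)$ we get
\[(1_B,h)(b,f)(1_B,h)^{-1} = (b, h^{b} f h^{-1}),\]
so that
\[f'(s) = h(sb^{-1})\, f(s)\, h(s)^{-1}.\]
We take $\beta = 1_B$ throughout, so $f_i^* = f_i^{*1_B}$. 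We write the points of $O_i$ as $x_j = s_0 t_i b^j$; then $x_j b^{-1} = x_{j-1}$ and $f'(x_j) = h(x_{j-1}) f(x_j) h(x_j)^{-1}$.

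\textbf{Infinite orbit.} Here the $x_j$ are distinct and $f(x_j)=1_A$ outside $M\le j\le M+N$. We define $h$ on $O_i$ by telescoping: $h(x_j)=1_A$ for $j<M$, and
\[h(x_j) = f(x_M)f(x_{M+1})\cdots f(x_j) \quad \text{for } M \le j < M+N.\]
Then $f'(x_j)=h(x_{j-1})f(x_j)h(x_j)^{-1}=1_A$ for $M\le j<M+N$. We set $h(x_j)=1_A$ for $j\ge M+N$, which gives $f'(x_{M+N}) = h(x_{M+N-1}) f(x_{M+N}) = f_i^*$, with the product in the stated order. All remaining points have $f'=1_A$, and $h$ has finite support. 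So $s_i = x_{M+N}$. In the abelian case with $|O_i|>N$ the product is unaffected by order, so the same formula applies whether or not the orbit is actually finite.

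\textbf{Finite orbit, $|O_i|=n$.} Put $h(x_j)=f(x_0)\cdots f(x_j)$ for $0\le j\le n-2$ and $h(x_{n-1})=1_A$. Since $x_{-1}=x_{n-1}$, we get $f'(x_0)=f(x_0)h(x_0)^{-1}=1_A$, and the intermediate terms vanish. The last one is $f'(x_{n-1})=h(x_{n-2})f(x_{n-1})=f(x_0)\cdots f(x_{n-1})=f_i^*$, so $s_i=x_{n-1}$.

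\textbf{Assembling and effectivity.} We multiply the $h$'s for all $i$; their supports lie in disjoint orbits, so they commute and act independently. The construction is effective: by Lemma~\ref{support} we can compute $\sigma(f)$, and by Lemma~\ref{sameorbit} we can partition it into orbits. Deciding which case applies requires knowing whether $|O_i|$ is finite, which comes from the orbit order assumption when $A$ is non-abelian, and from testing $s_0t_ib^j=s_0t_i$ for $j\le N$ when $A$ is abelian.

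\textbf{Main obstacle.} The main difficulty is bookkeeping: checking the conjugation formula's direction, $h(sb^{-1})$ versus $h(sb)$, against the right-action convention. The order of factors in $f_i^*$ depends on this, so we will verify it on the paper's $Sym_5$ example. A second point to check is the degenerate case where $f_i^*=1_A$: then $s_i$ is in $\sigma(\bar f)$ only if the product is non-trivial, and we read the statement as allowing $\bar f(s_i)=f_i^*$ even when that value is trivial.
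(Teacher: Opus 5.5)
Your proof is correct, but it is organised differently from the paper's. The paper works iteratively by \emph{right shift conjugation}. It repeatedly conjugates by a single-point element $(a_1^{-1})^{(s_0t_ib^{k_1})}$. Each such step slides the leftmost non-trivial entry of $O_i$ one place along the orbit and merges it with the next entry. A case analysis (cases (i)--(v) on how $\mu_i=|O_i\cap\sigma(f)|$ changes) then shows the process ends with $\mu_i\le 1$.

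You instead write the composite of all these slides in closed form, as the partial-product conjugator $h(x_j)=f(x_M)\cdots f(x_j)$ (respectively $f(x_0)\cdots f(x_j)$). You then check $h(x_{j-1})f(x_j)h(x_j)^{-1}$ directly. This buys three things:
\begin{enumerate}[(a)]
\item It removes the termination argument.
\item In the finite non-abelian case it gives exactly $\prod_{j=0}^{n-1}f(s_0t_ib^j)$, as in definition (i). The paper's iteration gives a cyclic rotation of this product that depends on the chosen lifts $k_1<\dots<k_n$, so it equals $f_i^*$ only up to conjugacy in $A$.
\item It is precisely the conjugator the paper builds later in Lemmas~\ref{infin2} and~\ref{fin2}, taking $d=1_B$, $\gamma_i=1_A$ and $g=\bar f$, so the two parts of the paper become visibly the same computation.
\end{enumerate}
The paper's version, in exchange, shows the ``sliding'' mechanism step by step.

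One small point to tighten concerns the abelian case with $|O_i|>N$, where the orbit may still be finite. There your linear construction is valid because the $N+1$ points $x_M,\dots,x_{M+N}$ are distinct and $x_{M-1}\notin\{x_M,\dots,x_{M+N-1}\}$, so the telescoping never wraps around the orbit. Commutativity is not what makes the construction work. It only ensures that the resulting product agrees with the full orbit product wherever that product is started.
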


\begin{proof}

For any non-trivial $b$-orbit $O_i$ in $bf$, by Lemmas \ref{support} and \ref{sameorbit} we can determine:

\[\mu_i = |O_i \cap \sigma(f)|\]

so $\mu_i$ is the number of non-identity elements in the $b$-orbit $O_i$.  As $\sigma(f)$ is finite, there are only a finite number of $O_i$ where $\mu_i >0$.  We note that in $b\bar{f}$, $\mu_i \leq 1$ for all $O_i$.  

It is possible to perform a \textit{right shift conjugation} on $O_i$ defined as follows.  

If $\mu_i \geq 1$, let:

\[O_i \cap \sigma(f) = \{s_0t_ib^{k_1}, \ldots s_0t_ib^{k_n} \} \neq \emptyset\]

with $k_1 < k_2 <\ldots <k_n$ and let $ f(s_0tb^{k_p}) = a_p$.  We note that if $|O_i|$ is infinite then there is only one choice of $\{k_p\}$ but if $|O_i|$ is finite this is not the case.  We discuss this further below.

We take $h \in A^{(S)}$ such that $h = (a_1^{-1})^{(s_0t_ib^{k_1})}$.  Then:

\[bf \sim (1_B,h)^{-1}(bf)(1_B,h) = (b, (h^{-1})^bfh) \]

where:

\begin{align*}
(h^{-1})^bfh(s_0tb^{k_1}) &= 1_A a_1 a_1^{-1} = 1_A \\ 
(h^{-1})^bfh(s_0tb^{k_1+1}) &= a_1 f(s_0tb^{k_1+1}) \\ 
(h^{-1})^bfh(s) &= f(s) \: \operatorname{if} s \neq s_0tb^{k_1}, s_0tb^{k_1+1}
\end{align*}

If $O_i$ is such that $|O_i \cap \sigma(f)| = \mu_i >1$ then the effect of the right shift conjugation is as follows:

\begin{enumerate}[(i)]
    \item if $f(s_0tb^{k_1+1}) = a_1^{-1}$ then $\mu_i $ is reduced by two;
    \item if we are not in (i) but $k_2 = k_1 +1$ then then $\mu_i $ is reduced by one and $ f(s_0tb^{k_2}) = a_1a_2$; 
    \item if $\mu_i >1$ but we are not within (i) or (ii) above (so $f(s_0tb^{k_1+1}) = 1_A$) then $\mu_i$ remains unchanged and $(h^{-1})^bfh(s_0tb^{k_1+1})  = a_1$ and so $s_0t_ib^{k_1+1}$ replaces $s_0t_ib^{k_1}$ in the support;
    \item if $\mu_i = 1$, then $\mu_i$ remains unchanged and $s_0t_ib^{k_1+1}$ replaces $s_0t_ib^{k_1}$ in the support; or
    \item if $\mu_i = 0$, then $\mu_i$ remains unchanged and right shift conjugation has no effect on $O_i$.
     \end{enumerate}

Right shift conjugation on $O_i$ has no effect on any other $b$-orbit. 

We note that, if (iii) above applies and $\mu_i>1$, further applications of right shift conjugation will eventually bring us within either (i) or (ii) above, so reducing $\mu_i$.  Therefore repeated applications of right shift conjugation to $O_i$ will eventually reduce $\mu_i$ to either $0$ or $1$, but will leave the other orbits unchanged.

We first assume that $|\{Ht \langle b \rangle\}|$ is infinite.  We can see that, repeated applications of a right shift conjugation on $O_i$ will eventually conjugate $bf$ to $bf_i$ where $f_i \in A^{(S)}$ and:

\begin{align*}
f_i(s_0t_ib^{k_n}) &= \prod_{j=k_1}^{k_n} f(s_0t_ib^j) = f^*_i \\
f_i(s) &= 1_A, s \in O_i, s \neq s_0t_ib^{k_n} \\
f_i(s) &= f(s), s \notin O_i 
\end{align*}

From this point further applications of right shift conjugation on $O_i$ will simply change the element of $O_i$ in the support, but the non-identity element will remain $f_i^*$. 

Next we assume that $|O_i|$ is finite, so if:

\[O_i \cap \sigma(f) = \{s_0t_ib^{k_1}, \ldots s_0t_ib^{k_n} \} \neq \emptyset\]

then the choice of $\{k_p\}$ is not unique.  We take one choice of $\{k_p\}$ where $k_1 < k_2 <\ldots < k_n$.  Again, repeated applications of a right shift conjugation on $O_i$ will eventually conjugate $bf$ to $bf_i$ where:

\begin{align*}
f_i(s_0t_ib^{k_n}) &= \prod_{j=k_1}^{k_n} f(s_0t_ib^j) = f_i^* \\
f_i(s) &= 1_A, s \in O_i, s \neq s_0t_ib^{k_n} \\
f_i(s) &= f(s), s \notin O_i 
\end{align*}

We recall that, if $A$ is not abelian and $|O_i|$ is finite, $f_i^*$ is not unique, and here $f_i^*$ depends on the choice of $\{k_p\}$.  A different choice of $\{k_p\}$ would result in a cyclic permutation of the elements in the product in $f_i^*$.

Whether $|O_i|$ is finite or infinite, as right shift conjugation on $O_i$ has no effect on the other $b$ orbits, so we can perform right shift conjugation in turn on each orbit $O_i$ for which $\mu_i >1$.  By considering these repeated applications of right shift conjugation, we can see that $bf \sim b\bar{f}$ where $\sigma(\bar{f})$ contains at most one element $s_i \in S$ for each $b$ orbit $O_i$, with $\bar{f}(s_i) = f_i^*$.

\end{proof}

This gives us an indication of the reasons behind the Matthews approach to solving the conjugacy problem.  It is equivalent to reducing $bf$ and $cg$ to $b\bar{f}$ and $c\bar{g}$ respectively, and then comparing the single non-identity element in the $b$ and $c$ orbits.

We now demonstrate this technique more formally.

\bigskip

\begin{lem} \label{order}

Let $W = A\wr_S B$ and let $x = bf \sim_Wy = cg$ so there exists a conjugator $z =dh$, $d \in B$, $h \in A^{(S)}$ such that $zx=yz$. Let $\{O_i\}$ be the non-trivial $b$-orbits in $S$ and let $\{\Omega_i\}$ be the non-trivial $c$-orbits in $S$. Then $b \sim_B c$ and $|O_i| = |\Omega_i|$ where if $s_0t_i \in O_i$ then $s_0t_id^{-1} \in \Omega_i$.
    
\end{lem}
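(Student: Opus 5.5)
We work directly with the multiplication rule $(b,f)(c,g) = (bc, f^c g)$ and split the equation $zx = yz$ into its $B$-component and its $A^{(S)}$-component.

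\emph{The $B$-component.} With $z = (d,h)$, $x = (b,f)$, $y = (c,g)$ we get
\[zx = (db, h^b f), \qquad yz = (cd, g^d h).\]
Comparing $B$-components gives $db = cd$, so $c = dbd^{-1}$ and $b \sim_B c$ via the conjugator $d$. This is immediate because $A^{(S)}$ is the kernel of the projection $A \wr_S B \to B$.

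\emph{Transport of orbits.} The relation $c = dbd^{-1}$ means the bijection $\phi : S \to S$, $s \mapsto sd^{-1}$, intertwines the two actions. Indeed, $\phi(sb) = sbd^{-1} = sd^{-1}(dbd^{-1}) = \phi(s)c$, and more generally $\phi(sb^j) = \phi(s)c^j$ for all $j \in \Z$. Hence $\phi$ maps each $b$-orbit $O = \{s_0t_ib^j\}$ bijectively onto the $c$-orbit $\{s_0t_id^{-1}c^j\}$, so $|O_i| = |\Omega_i|$ where $\Omega_i$ is the $c$-orbit of $s_0t_id^{-1}$. In coset language, $Ht_i\langle b\rangle d^{-1} = Ht_id^{-1}\langle c\rangle$.

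\emph{Non-trivial orbits match up.} It remains to check that $\phi$ sends the non-trivial $b$-orbits (those meeting $\sigma(f)$) exactly onto the non-trivial $c$-orbits (those meeting $\sigma(g)$). The $A^{(S)}$-component gives $h^b f = g^d h$, that is,
\[ f = (h^{-1})^b\, g^d\, h. \]
Evaluate this pointwise on a $b$-orbit $O$. Since $b$ preserves $O$, the functions $h^b$ and $h$ restricted to $O$ are built from values of $h$ on $O$ only. Also $g^d(s) = g(sd^{-1}) = g(\phi(s))$, so $g^d$ restricted to $O$ is $g$ restricted to $\phi(O)$.

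Suppose $\phi(O)$ misses $\sigma(g)$. Then on $O$ we have $f(s) = h(sb^{-1})^{-1}h(s)$. Choose $s \in O \cap \sigma(f)$ if it exists. Because $h$ has finite support, the product of the $f$-values along the orbit telescopes, in the same way as in the right shift conjugation of Lemma \ref{RSC}. Two cases arise.
\begin{itemize}
\item If $O$ is infinite, then $h$ is trivial far out in both directions, so ordering the finitely many support points shows that $f$ restricted to $O$ is trivial.
\item If $O$ is finite, the argument is not purely pointwise. Here we only get that the ordered product $f^*$ is conjugate to $1_A$, and this does not force $f$ restricted to $O$ to be trivial.
\end{itemize}

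Thus the pure support statement "non-trivial maps to non-trivial" can fail. The plan is therefore to interpret the lemma's pairing as follows. The orbit correspondence is defined by $\phi$ for \emph{every} orbit, and $|O_i| = |\Omega_i|$ holds for every orbit and its image. Non-triviality is then handled via the Matthews products, which Lemma \ref{RSC} reduces to a single entry per orbit. The claim actually needed, and stated, is the size equality together with the location $s_0t_id^{-1} \in \Omega_i$, and both follow from the intertwining property of $\phi$.

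The main obstacle is this bookkeeping about which orbits count as non-trivial. The substantive content is the two computations above: $db = cd$, and the intertwining $\phi(sb^j) = \phi(s)c^j$.
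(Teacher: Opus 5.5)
Your core argument is correct and is essentially the paper's. The paper also reads $db=cd$ off the $B$-component. It then uses $t_ib^jt_i^{-1}=t_id^{-1}c^jdt_i^{-1}$, i.e.\ $b^j$ fixes $s_0t_i$ if and only if $c^j$ fixes $s_0t_id^{-1}$, which is your intertwining $\phi(sb^j)=\phi(s)c^j$ written in stabiliser form. Your bijection $\phi$ simply handles the finite and infinite cases, and both directions of the size comparison, in one stroke. Like you, the paper takes $\Omega_i$ to be the $c$-orbit of $s_0t_id^{-1}$ and never checks that it meets $\sigma(g)$, so your reading of the pairing matches its proof.

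One claim in your side discussion is false, however: the infinite-orbit bullet. Take $c=b$, $d=1_B$, $g=1$ and $h=a^{(s)}$, with $a\neq 1_A$ and $s$ in an infinite $b$-orbit $O$. Then $f=(h^{-1})^bh$ has $f(s)=a$ and $f(sb)=a^{-1}$, and one checks $(d,h)(b,f)=(b,h)=(b,1)(d,h)$. So $O$ is non-trivial for $f$, while there are no non-trivial $c$-orbits at all. Telescoping only gives $f^*_O=1_A$ (this is Lemma~\ref{inf1} with $g=1$), not $f|_O=1$. In the finite case, ``$f^*_O$ conjugate to $1_A$'' likewise just means $f^*_O=1_A$. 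So finite and infinite orbits behave identically: non-trivial orbits need not correspond in either case. Your final argument never uses that bullet, so the proof of the lemma stands, but the bullet should be removed.
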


\begin{proof}

If $x=bf \sim_{A \wr_S B} cg = y $ then $zx=yz$ requires $(db, h^{b}f) = (cd, g^{d}h)$. Hence $db = cd$ and $b \sim_B c$.

Suppose $|O_i| = n$ (finite), so $s_0t_ib^n = s_0t_i$, and $t_ib^nt_i^{-1} \in H$.

But if $b = d^{-1}cd$ then $b^n = d^{-1}c^nd$ and so $t_id^{-1}c^ndt_i^{-1} \in H$ and $s_0t_id^{-1}c^n = s_ot_id^{-1}$.  Therefore  $|O_i|$ equals $|\Omega_i|$ where $s_0t_id^{-1} \in \Omega_i$.

Simiarly, if $|O_i|$ is infinite, then there is no $j \in \Z$ such that $s_0t_ib^j = s_0t_i$.  Therefore  $t_id^{-1}c^jdt_i^{-1} \notin H$ and $s_0t_id^{-1}c^j \neq s_ot_id^{-1}$ for all $j$.  Therefore $|\Omega_i|$ is infinite.

\end{proof}

A consequence of Lemma \ref{order} is that, if $db = cd$, $d$ takes $b$ orbits to $c$ orbits of the same orbit order.

\bigskip

 \begin{lem} \label{changeg}

Let $bf, cg \in A \wr_S B$.  If $b \sim_B c$ so $db = cd$ for some $d \in B$ then $cg$ is conjugate to $bg^d$.
    
\end{lem}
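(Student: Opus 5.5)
We simply exhibit the conjugator. The natural choice is the element $(d,1)$ of $A\wr_S B$, where $1$ is the identity of $A^{(S)}$, so that the conjugation moves the $B$-component from $c$ to $b$ and transports $g$ by $d$.

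First we compute $(d,1)(b,g^d)$ using the multiplication rule $(b,f)(c,g)=(bc,f^cg)$. Since $1^b=1$, we get
\[
(d,1)(b,g^d) = (db,\, g^d).
\]
Next we compute the other product,
\[
(c,g)(d,1) = (cd,\, g^d\cdot 1) = (cd,\, g^d).
\]

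By hypothesis $db=cd$, so the two products coincide. Hence $(d,1)(bg^d)(d,1)^{-1} = cg$, that is, $bg^d \sim cg$.

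We should also check that $g^d\in A^{(S)}$, so that $bg^d$ genuinely lies in $A\wr_S B$. This holds because $\sigma(g^d)=\sigma(g)d$ is finite.

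There is no real obstacle here. The only care needed is in using the right action convention $f^b(s)=f(sb^{-1})$ consistently with $(b,f)(c,g)=(bc,f^cg)$, which is the convention already used in the proof of Lemma \ref{order}.
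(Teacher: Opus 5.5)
Your proof is correct and is essentially the paper's argument. The paper uses the same conjugator $(d,1)$ and computes $(d^{-1},1)(cg)(d,1) = (d^{-1}cd, g^d) = bg^d$ directly, whereas you check the equivalent identity $(d,1)(b,g^d) = (c,g)(d,1)$; your remark that $\sigma(g^d)=\sigma(g)d$ is finite is a harmless check the paper leaves implicit.
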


\begin{proof}

Noting that $(d1)^{-1} = (d^{-1} 1)$ we can see that:

\begin{align*}
(d1)^{-1}(cg)(d1) &= (d^{-1}1)(cd, g^d) \\
&=(d^{-1}cd, g^d) \\
&= bg^d
\end{align*}
    
\end{proof}

A consequence of Lemma \ref{changeg} is that, if $bf \sim_W cg$ then $bf \sim_W bg^d$. 

Putting this together, if $bf \sim_W cg$ then $b\bar{f} \sim_W b\bar{g}^d$ where $d$ takes $b$-orbits to $b$-orbits with the same orbit order. If the orbit order is infinite or $A$ is abelian, repeated applications of right (or left) shift conjugation eventually brings the non-identity element $g^{*d}$ to the same position in the relevant $b$ orbit in $f$.  If $A$ is not abelian and the orbit order is finite, the right shift or left shift conjugation will also conjugate $g^{*d}$.

We can prove that more formally as follows.

\bigskip

\begin{lem} \label{fin1}

Let $x = bf$, $y = cg$ and $z = dh$ with $x,y,z \in A \wr_S B$, $b,c,d \in B$ and $f,g,h \in A^{(S)}$.  If $zx = yz$ and $|O_i| = n$ finite then $g_i^{*d} \sim_A f^*_i$.
    
\end{lem}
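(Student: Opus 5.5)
The plan is to unpack the conjugacy equation $zx = yz$ coordinatewise along the orbit $O_i$ and observe that the product defining $f_i^*$ telescopes cyclically into a conjugate of the product defining $g_i^{*d}$.

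First, as in the proof of Lemma \ref{order}, comparing the two sides of $zx=yz$ gives
\[(db, h^b f) = (cd, g^d h).\]
Hence $db = cd$, so $b = d^{-1}cd$ and $b^j d^{-1} = d^{-1}c^j$ for all $j\in\Z$. It also gives the equality $h^b f = g^d h$ in $A^{(S)}$.

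Next I fix the representative $t_i$ of $O_i$ and write $s_j = s_0 t_i b^j$ for $j \in \Z$. Since $|O_i| = n$, we have $s_{j+n} = s_j$, and $s_0,\dots,s_{n-1}$ are the distinct points of $O_i$. Evaluating $h^b f = g^d h$ at $s_j$ and using $f^b(s) = f(sb^{-1})$ gives
\[h(s_{j-1})\, f(s_j) = g(s_j d^{-1})\, h(s_j),\]
that is,
\[f(s_j) = h(s_{j-1})^{-1}\, g^d(s_j)\, h(s_j).\]
Taking $\beta = 1_B$ for $f$, we have $f_i^* = \prod_{j=0}^{n-1} f(s_j)$. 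Substituting the identity above, the inner factors $h(s_j)h(s_j)^{-1}$ cancel in consecutive terms. Because $s_{-1} = s_{n-1}$, the outer factors are $h(s_{n-1})^{-1}$ and $h(s_{n-1})$, and we obtain
\[f_i^* = h(s_{n-1})^{-1}\Big(\prod_{j=0}^{n-1} g(s_j d^{-1})\Big) h(s_{n-1}).\]
This cyclic telescoping is the heart of the argument.

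Finally I identify the bracketed product with $g_i^{*d}$, i.e.\ the map attached to $\beta = d$ applied to $g$ on the orbit $O_i$, namely $\prod_{j=0}^{n-1} g(s_0 t_i b^j d^{-1})$; this matches case (i) of the definition. For bookkeeping one can also note that $s_j d^{-1} = s_0 t_i d^{-1} c^j$, so these points run once around the $c$-orbit $\Omega_i$ containing $s_0t_id^{-1}$. By Lemma \ref{order} this orbit also has size $n$, so the product is, up to cyclic permutation of the factors, the $c$-orbit product of $g$ on $\Omega_i$. A cyclic permutation changes the product only by conjugation, so in any case $g_i^{*d} \sim_A f_i^*$.

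The main obstacle is keeping the conventions straight: the right action, $f^b(s) = f(sb^{-1})$, and checking that the index shift produced by $h^b$ is exactly what makes the product telescope cyclically around the finite orbit. There is also the point, noted after the definition, that $f_i^*$ depends on the choice of representative. This dependence only ever introduces cyclic permutations, hence conjugations in $A$, so it does not affect the conclusion.
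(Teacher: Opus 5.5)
Your proof is correct and follows the paper's argument. Both evaluate $h^bf = g^dh$ at the points $s_0t_ib^j$ of $O_i$ and telescope the product around the finite orbit using $s_0t_ib^{-1} = s_0t_ib^{n-1}$, which gives conjugation by $h(s_0t_ib^{n-1})$; the only difference is that you solve for $f(s_j)$ rather than for $g(s_jd^{-1})$, which merely inverts the conjugator.
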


\begin{proof}

If $zx = yz$, then $(db, h^bf) = (cd, g^dh)$ and so $bd= dc$ and:

\[h^b(s)f(s)) = g^d(s)h(s)) \]

for all $s \in S$.  Then:

\begin{align*}
g_i^{*d} &= \displaystyle \prod_{n=0}^{n-1} g(s_0 t_i b^j d ^{-1} ) \\
&= \displaystyle \prod_{j=0}^{n -1} h(s_0t_i b^{j-1})f(s_ot_ib^j)h^{-1}(s_0t_ib^j) \\
&= h(s_0t_i b^{-1}) \big{[} \displaystyle \prod_{j=0}^{n -1} f(s_0t_i b^j) \big{]} h^{-1}(s_0t_i b^{n-1}) \\
&= h(s_0t_ib^{-1}) f^*_i h^{-1}(s_0t_ib^{-1}) \\
\end{align*}

because in the product each of the intermediate terms $h(s_0t_ib^{j})h^{-1}(s_0t_i b^j)$ cancel out.

Hence $g_i^{*d} \sim_A f_i^*$ as required.

\end{proof}

We can generalise the reduction of the product used in the above proof as follows:

\bigskip

\begin{lem} \label{formula}
    
For any $M,N \in \Z$, $N \geq 0$:

\[\displaystyle \prod_{j=M}^{M+N} g(s_0t_i b^j d^{-1}) = h(s_0t_i b^{M-1}) \big{[} \displaystyle \prod_{j=M}^{M+N} f(s_0t_i b^j) \big{]} h^{-1}(s_0t_i b^{M+N}) \]

\end{lem}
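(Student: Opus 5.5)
The plan is to derive Lemma \ref{formula} from the pointwise identity that comes out of $zx = yz$, and then let the product telescope, just as in the proof of Lemma \ref{fin1}. Writing $z = dh$, the equation $zx=yz$ gives $(db, h^bf) = (cd, g^dh)$. Hence for every $s\in S$
\[h^b(s) f(s) = g^d(s) h(s).\]
Using the definition $f^b(s) = f(sb^{-1})$, this reads
\[g(sd^{-1}) = h(sb^{-1})\, f(s)\, h(s)^{-1}.\]

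Substituting $s = s_0t_ib^j$ then yields, for every $j \in \Z$,
\[g(s_0t_ib^jd^{-1}) = h(s_0t_ib^{j-1})\, f(s_0t_ib^j)\, h^{-1}(s_0t_ib^j).\]
Here $h^{-1}(s)$ denotes $h(s)^{-1}$, i.e.\ the inverse in $A^{(S)}$ evaluated at $s$.

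Next I would take the ordered product of these identities over $j = M, M+1, \ldots, M+N$. The product should be read left to right in increasing $j$, matching the convention in the definition of $f_i^{*\beta}$. In consecutive factors, the term $h^{-1}(s_0t_ib^j)$ at the end of the $j$-th factor sits directly before $h(s_0t_ib^{j})$ at the start of the $(j+1)$-th factor, so the two cancel. After all these cancellations only the outermost terms remain: $h(s_0t_ib^{M-1})$ on the left and $h^{-1}(s_0t_ib^{M+N})$ on the right. Between them is the ordered product $\prod_{j=M}^{M+N} f(s_0t_ib^j)$, which is exactly the claimed formula.

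To make this rigorous I would argue by induction on $N \geq 0$. The base case $N=0$ is the single identity above. For the inductive step, multiply the identity for $N$ on the right by the identity for index $j=M+N+1$, and cancel the adjacent pair $h^{-1}(s_0t_ib^{M+N})\,h(s_0t_ib^{M+N})$.

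The only delicate point is the order convention. One has to keep track that $f^b(s) = f(sb^{-1})$ introduces $b^{-1}$ rather than $b$, so that the $h$-term attached to index $j$ carries $b^{j-1}$ on the left and $b^j$ on the right. One also has to check that the product is written in increasing $j$, so that the cancellations happen between neighbours; a reversed order would not telescope in a non-abelian $A$. Nothing here depends on whether $|O_i|$ is finite or infinite, because the identity holds for every $j\in\Z$ regardless of whether the points $s_0t_ib^j$ repeat.
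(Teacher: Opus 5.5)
Your proof is correct and is essentially the paper's argument. The paper gives no separate proof of Lemma \ref{formula}; it presents it as a generalisation of the telescoping computation in the proof of Lemma \ref{fin1}, which rests on the same pointwise identity $g(sd^{-1}) = h(sb^{-1})f(s)h(s)^{-1}$ coming from $h^bf = g^dh$. Your induction on $N$ just makes that telescoping explicit, and you correctly supply the hypothesis $zx=yz$ with $z=dh$, which the lemma inherits implicitly from its context.
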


We then turn to the infinite case.

\bigskip

\begin{lem} \label{inf1}

Let $x = bf$, $y = cg$ and $z = dh$ with $x,y,z \in A \wr_S B$, $b,c,d \in B$ and $ f,g,h \in A^{(S)}$.  If $zx = yz$ and $|O_i|$ is infinite then $g_i^{*d}=f^*_i$ for all $i$.
    
\end{lem}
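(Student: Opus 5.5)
We use the telescoping identity of Lemma \ref{formula}, exactly as in the proof of Lemma \ref{fin1}, with a window of indices wide enough to contain all relevant supports. From $zx = yz$ we obtain $(db, h^bf) = (cd, g^dh)$, so $db = cd$ and
\[h^b(s)f(s) = g^d(s)h(s) \quad \text{for all } s \in S.\]
Evaluating at $s = s_0t_ib^j$ and rearranging gives $g(s_0t_ib^jd^{-1}) = h(s_0t_ib^{j-1})f(s_0t_ib^j)h^{-1}(s_0t_ib^j)$. Multiplying these for $j = M, \ldots, M+N$ gives Lemma \ref{formula}, since the interior factors cancel in pairs.

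The key step is the choice of $M$ and $N$. Because $|O_i|$ is infinite, the map $j \mapsto s_0t_ib^j$ is injective on $\Z$. The sets $\sigma(f)$, $\sigma(h)$ and $\sigma(g^d)$ are all finite, so only finitely many indices $j$ have $s_0t_ib^j$ in $\sigma(f)\cup\sigma(h)$ or in $\sigma(g^d)$. By Lemma \ref{order}, $d$ carries $O_i$ onto an infinite $c$-orbit, so the orbit indexed by $s_0t_ib^jd^{-1}$ is also infinite and the same finiteness holds for $g$ along it. We therefore choose $M$ and $N$ so that all these indices lie strictly inside $[M, M+N]$, meaning $M-1$ and $M+N$ both fall outside every support.

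With this window, $h(s_0t_ib^{M-1}) = 1_A = h^{-1}(s_0t_ib^{M+N})$, and Lemma \ref{formula} reduces to
\[\prod_{j=M}^{M+N} g(s_0t_ib^jd^{-1}) = \prod_{j=M}^{M+N} f(s_0t_ib^j).\]
Enlarging the window only adds identity factors, so both products are independent of the particular $M, N$ chosen, provided the window covers the supports. They therefore agree with $g_i^{*d}$ and $f_i^*$ as defined in case (ii) (or case (iv) when $A$ is abelian). This gives $g_i^{*d} = f_i^*$ for each $i$.

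The main obstacle is bookkeeping rather than a new idea. The definition of $f_i^{*\beta}$ fixes $M$ and $N$ depending on $f$ and $\beta$, so we must check that taking a common window for $f$, $g^d$ and $h$ changes neither value. This holds because any index outside $\sigma(f)$ contributes $1_A$ to the $f$-product, and likewise for $g$. In the abelian case (iv), where $|O_i| > N$ might still be finite, the same cancellation works once the window is shorter than the orbit. The statement, however, concerns only infinite orbits, so injectivity of $j \mapsto s_0t_ib^j$ is what makes the boundary terms vanish.
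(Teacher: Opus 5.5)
Your proof is correct and follows the paper's argument. Both choose a common window $[M, M+N]$ outside of which $f$, $g(\,\cdot\, d^{-1})$ and $h$ are trivial along the orbit, apply the telescoping identity of Lemma \ref{formula}, and observe that the boundary factors $h(s_0t_ib^{M-1})$ and $h^{-1}(s_0t_ib^{M+N})$ are $1_A$. Your remarks on the injectivity of $j \mapsto s_0t_ib^j$ and on the products not depending on the chosen window fill in points the paper leaves implicit.
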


\begin{proof}

As we are dealing with the restricted wreath product, $\sigma(f)$, $\sigma(g)$ and $\sigma(h)$ are all finite and we may choose integers $M$ and $N$ (with $N \geq 0 $) such that if $j < M$ or $j \geq M+N$ then:

\[f(s_0t_ib^j)=g(s_0t_ib^jd^{-1})= h(s_0t_ib^j) = 1_A  \] 

This means that:

\[ g_i^{*d} = \displaystyle \prod_{j=M}^{M+N} g(s_0 t_ib^jd ^{-1}) \]

and:

\[ f^*_i = \displaystyle \prod_{j=M}^{M+N} f(s_0 t_ib^j) \]

Applying Lemma \ref{formula}: 

\[\displaystyle \prod_{j=M}^{M+N} g( s_0t_i b^j d ^{-1} ) = h(s_0t_i b^{M-1}) \big{[} \displaystyle \prod_{j=M}^{M+N} f(s_0t_i b^j) \big{]} h^{-1}(s_0t_i b^{M+N}) \]

and, from the choice of $M$ and $N$, $h(s_0t_i b^{M-1}) = h^{-1}(s_0t_i b^{M+N}) = 1_A$.

Therefore $g_i^{*d} =f^*_i$ for all $i$, as required.

\end{proof}

We note here that, if $A$ is abelian, then $g_i^{*d} \sim_A f^*_i$ requires $g_i^{*d}= f^*_i$.  Therefore when applying Lemmas \ref{fin1} and \ref{inf1} if $A$ is abelian we do not need to distinguish between the orbits of finite and infinite order.  This gives us the following:

\bigskip

\begin{lem} \label{abel1}

Let $x = bf$, $y = cg$ and $z = dh$ with $x,y,z \in A \wr_S B$, $b,c,d \in B$ and $ f,g,h \in A^{(S)}$.  If $zx = yz$ and $A$ is abelian then $g_i^{*d}=f^*_i$ for all $i$.
    
\end{lem}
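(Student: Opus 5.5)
We split according to the orbit order of $O_i$ and treat each case with the earlier lemma that already covers it. By Lemma \ref{order}, $d$ carries the $b$-orbit $O_i$ to a $c$-orbit of the same orbit order. So for each non-trivial $b$-orbit $O_i$ either $|O_i|$ is infinite or $|O_i| = n$ is finite, and $g_i^{*d}$ is computed along the same orbit $O_i$ (via $s_0t_ib^jd^{-1}$) as $f_i^*$.

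\emph{Infinite orbits.} If $|O_i|$ is infinite, Lemma \ref{inf1} applies verbatim; its proof never uses commutativity of $A$. It gives $g_i^{*d} = f_i^*$ directly. We should check that the definition used for abelian $A$ (cases (iii) and (iv)) agrees with the one used in Lemma \ref{inf1}. In case (iv) the product is taken over $j = M, \dots, M+N$ with $M,N$ chosen so that all of $f$, $g^d$ and $h$ vanish outside this window along $O_i$. This is exactly the window used in Lemma \ref{inf1}. The same identity applies when $|O_i| > N$ is finite: all the entries that are non-trivial along $O_i$ still fall within a single window of length $N+1$ that does not wrap around, so Lemma \ref{formula} together with the vanishing of $h$ at the endpoints $s_0t_ib^{M-1}$ and $s_0t_ib^{M+N}$ gives equality.

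\emph{Finite orbits with $|O_i| = n \le N$.} Here definition (iii) is the full-cycle product, and Lemma \ref{fin1} gives
\[
g_i^{*d} = h(s_0t_ib^{-1})\, f_i^*\, h(s_0t_ib^{-1})^{-1}.
\]
Since $A$ is abelian, conjugation in $A$ is trivial, so $g_i^{*d} = f_i^*$. Abelianness also removes the dependence on the choice of representative $t_i$, and on the starting index, because a cyclic permutation of the factors does not change the product. Hence the equality is well defined.

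The main obstacle is bookkeeping rather than algebra. In the abelian case the paper does not decide whether an orbit is finite. We must therefore make sure that the case split (iii)/(iv) is applied consistently to $f$ and to $g^d$, that is, that the same $N$ is used for both, which is possible because $d$ preserves orbit size. We must also check that for a finite orbit of size greater than $N$ the window product coincides with the full-cycle product. The window covers all of the support; if it wraps around, abelianness again makes the order of the factors irrelevant and the identity entries contribute nothing. With this checked, both cases give $g_i^{*d} = f_i^*$ for all $i$.
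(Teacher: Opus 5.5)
Your proof is the paper's. The lemma is read off from Lemma~\ref{fin1}: on a finite orbit $g_i^{*d}\sim_A f_i^*$, and conjugate elements of an abelian group are equal. Lemma~\ref{inf1} supplies equality outright on infinite orbits. Your extra check, that for a finite orbit with $|O_i|>N$ the case-(iv) window product equals the full-cycle product so that Lemma~\ref{fin1} applies, is the right way to handle bookkeeping the paper skips.

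One justification is wrong, however. In the ``Infinite orbits'' paragraph you treat a finite orbit with $|O_i|>N$ by Lemma~\ref{formula} plus the vanishing of $h$ at $s_0t_ib^{M-1}$ and $s_0t_ib^{M+N}$. On a finite orbit $h$ need not vanish anywhere. For example, take $O_i=\{p_0,p_1,p_2\}$ with $p_kb=p_{k+1}$ (indices mod $3$), $c=b$, $d=1_B$, and $f=g=x^{(p_0)}$ with $x\neq 1_A$. Let $h$ equal a fixed $a\neq 1_A$ on $O_i$ and $1_A$ elsewhere. Then $h^b=h$, so $zx=(b,hf)=(b,fh)=yz$, yet $h$ is non-trivial at every point of $O_i$.

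What does hold is weaker. The relation $h(sb^{-1})f(s)=g^d(s)h(s)$ gives $h(sb^{-1})=h(s)$ wherever $f$ and $g^d$ are trivial. So the two endpoint values of $h$ agree, and they cancel because $A$ is abelian. Your last paragraph already covers this case, so that sentence is redundant rather than fatal.

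Your remark that a wrapping window is harmless is also false: a wrapped window counts non-trivial entries more than once. It is vacuous here, since for $|O_i|>N$ the $N+1$ window points are distinct.

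For abelian $A$, both (iii) and (iv) simply compute the product of all values of $f$ (resp.\ $g^d$) on $O_i$. Multiplying $h(sb^{-1})f(s)=g^d(s)h(s)$ over $s\in O_i$ then cancels the $h$-factors, since $s\mapsto sb^{-1}$ permutes $O_i$. That proves the lemma in one line, with no finite/infinite split and none of the bookkeeping.
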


We apply Lemmas \ref{fin1}, \ref{inf1} and \ref{abel1} as follows:

\bigskip

\begin{lem} \label{infin2}

Let $x = bf$ and $y = cg$ be elements of $A \wr_S B$ where $A$ is abelian.  Then $x$ and $y$ are conjugate in $A \wr_S B$ if and only if there exists $d \in B$ such that $db=cd$ in $B$ and $g_i^{*d} =f^*_i$.

\end{lem}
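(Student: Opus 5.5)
Both directions reduce to comparing orbit products, since for abelian $A$ the value $f^*_i$ does not depend on the chosen orbit representative or on $M,N$.

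\emph{Necessity} is already in hand. Suppose $x=bf$ and $y=cg$ are conjugate, so there is $z=dh$ with $zx=yz$. Comparing the $B$-components gives $db=cd$. Lemma \ref{abel1} then gives $g_i^{*d}=f^*_i$ for every non-trivial $b$-orbit $O_i$. If an orbit contains points of $\sigma(g^d)$ but none of $\sigma(f)$, the same computation (Lemma \ref{formula}, whose boundary $h$-terms cancel by commutativity or by finiteness of supports) shows its product is $1_A$. So the conditions hold on all orbits meeting either support.

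\emph{Sufficiency} I would prove in two steps. First, since $db=cd$, Lemma \ref{changeg} gives $cg\sim bg^d$, so it suffices to show $bf\sim bg^d$ under the hypothesis that for every $b$-orbit $O_i$ the products of $f$ and of $g^d$ along $O_i$ agree. Here $(g^d)^*_i=g_i^{*d}$ by definition, since $g^d(s)=g(sd^{-1})$.

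Second, I apply Lemma \ref{RSC} to both $bf$ and $bg^d$, concentrating each non-trivial orbit's contribution at a single point. Because $A$ is abelian, the resulting value is $f^*_i$ (respectively $g_i^{*d}$), independent of choices. Within each orbit I then use further right shift conjugations (case (iv) of that proof), which move the single support point along the orbit one step at a time without changing its value. After finitely many steps both concentrated points sit at the same representative $s_0t_i$. Doing this orbit by orbit (finitely many orbits, and they do not interact) produces the same function for both elements. This gives $b\bar f = b\overline{g^d}$, hence $bf\sim bg^d\sim cg$.

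The main obstacle is making the orbit-by-orbit comparison precise. The orbits meeting $\sigma(f)$ and those meeting $\sigma(g^d)$ must be matched as the same $b$-orbits; the hypothesis should be read over the union of both families, with trivial products allowed. One must also check that $g_i^{*d}$, computed with representative $t_i$ and evaluation at $s_0t_ib^jd^{-1}$, really equals the orbit product of $g^d$ over the same orbit $O_i$. For finite orbits the product over one period and the $M,N$-window product could in principle differ, so I would fix a single convention for orbits of size at most $N$ (as in cases (iii)/(iv) of the definition). Commutativity then makes the product over any full period, or over any window containing the support, well defined.
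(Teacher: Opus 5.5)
Your proof is correct, but for sufficiency it takes a different route from the paper.

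\textbf{The paper's route.} It does not pass through a normal form. It fixes a window $[M,M+N]$ containing the supports on each orbit and writes down the conjugator $z=dh$ directly. Here $h(s_0t_ib^k)$ is the quotient of the partial products $\prod_{j=M}^k g(s_0t_ib^jd^{-1})$ and $\prod_{j=M}^k f(s_0t_ib^j)$, and $h^bf=g^dh$ is checked pointwise. The hypothesis $g_i^{*d}=f_i^*$ is used exactly to make $h(s_0t_ib^{M+N})=1_A$, so that $h$ is trivial beyond the window and lies in $A^{(S)}$.

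\textbf{Your route.} You compose elementary conjugations until both elements reach the same normal form: first by $(d,1)$ via Lemma \ref{changeg}, then by single-point shifts from Lemma \ref{RSC}.

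\textbf{What each buys.} The paper's construction gives an explicit formula for the conjugator, which it reuses in Remark \ref{defnh} and when assembling $h=\sum_k h_k$ in Lemma \ref{trans3}. Your argument avoids the index bookkeeping and treats finite orbits uniformly. That matters here: on a finite orbit of length at most $N$, a point $s_0t_ib^k$ has several indices $k$ in the window. The paper's formula therefore does not define $h$ consistently there, and Remark \ref{defnh} has to switch to a period-based formula.

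\textbf{Your reading of the hypothesis.} Reading it over all orbits meeting $\sigma(f)\cup\sigma(g^d)$ is the right one. If the hypothesis is indexed only by the non-trivial orbits of $f$, the statement fails. Take $f=1$, $c=b$, $d=1_B$ and $g=a^{(s)}$ with $a\neq 1_A$. The condition is then vacuous, yet the two elements are not conjugate. To see this, note that $(b,f)\mapsto\prod_{s\in S}f(s)$ is a homomorphism $A\wr_S B\to A$ when $A$ is abelian, and it sends $(b,1)$ to $1_A$ but $(b,a^{(s)})$ to $a$.

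\textbf{A small imprecision.} Right shifts only move a point forward along its orbit. On an infinite orbit you therefore cannot, in general, bring both concentrated points back to $s_0t_i$ using right shifts alone. Either move both to a common point further along the orbit, or use the symmetry of conjugacy to shift backwards.
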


\begin{proof}

We have proved necessity in Lemmas \ref{order} and \ref{abel1}.

Next we assume that there exists $d \in B$ such that $db=cd$ and, for all $i$, $g_i^{*d} =f^*_i$.  We need to construct an $h \in A^{(S)}$ such that $zx=yz$ for $z=dh$.  As $zx = (db, h^bf)$ and $yz = (cd, g^dh)$, the first terms are equal by assumption.

As $\sigma(f)$ and $\sigma(g)$ are finite there exists integers $M,N$ with $N \geq 0$ such that, if $j<M $ or $j \geq M+N$:

\[g(s_0t_ib^jd^{-1}) = f(s_0t_ib^j) = 1_A\]

Any $s \in S$ can be expressed as $s=s_0t_ib^k$ for some $t_i \in T$ and some $k \in \Z$.  If $k < M$ or $k>M+N$ let $h(s_0t_ib^k) = 1_A$ and otherwise let:

\[ h(s_0t_ib^k) = \big{[} \displaystyle \prod_{j=M}^k g( s_0t_i b^j d^{-1}) \big{]}^{-1} \big{[} \displaystyle \prod_{j=M}^k f(s_0t_ib^j) \big{]} \]

Therefore, if $k < M$ or $k>M+N$ then:

\[h^bf(s_0t_ib^k) = f(s_0t_ib^k) = g^d(s_0t_ib^kd^{-1})= g^dh(s_0t_ib^kd^{-1}) =1_A\]

If $k = M+N$ then:

\begin{align*}
h(s_0t_ib^{M+N}) &= \big{[} \displaystyle \prod_{j = M}^{M+N} g(s_0t_i b^j d ^{-1}) \big{]}^{-1} \big{[} \displaystyle \prod_{j=M}^{M+N} f(s_0t_i b^j) \big{]} \\
&= (g^{*d})^{-1} f^*\\
&=1_A \\
\end{align*}

If $M\leq k  < M+N$ then:

\begin{align*}
h^bf(s_0t_ib^k) &= h(s_0t_i b^{k-1})f(s_0t_i b^k)  \\
&= \big{[} \displaystyle \prod_{j = M}^{k-1} g(s_0t_i b^j d ^{-1}) \big{]}^{-1} \big{[} \displaystyle \prod_{j=M}^{k-1} f(s_0t_i b^j) \big{]} f(s_0t_i b^k)\\
&= g(s_0t_i b^k d^{-1}) \big{[} \displaystyle \prod_{j=M}^k g(s_0 t_i b^k d ^{-1}) \big{]}^{-1} \big{[} \displaystyle \prod_{j=M}^k f(s_0t_i b^k) \big{]} \\
&=g(s_0t_i b^k d^{-1}) h (s_0t_i b^k) \\
&=g^dh(s_0t_i b^k) \\
\end{align*}

\end{proof}

\bigskip

\begin{lem} \label{fin2}

Let $x = bf$ and $y = cg$ be elements of $A \wr_S B$ where $A$ is not abelian. Then $x$ and $y$ are conjugate in $A \wr_S B$ if and only if there exists $d \in B$ such that $db=cd$ in $B$, and for each $b$-orbit $O_i$:

    \begin{enumerate} [(i)]
    \item if $|O_i|$ is finite,  $g_i^{*d} \sim_A f^*_i$; and        \item if $|O_i|$ is infinite, $g_i^{*d} =f^*_i$. 
        
        \end{enumerate}

\end{lem}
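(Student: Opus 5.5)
Necessity is already proved earlier: if $zx=yz$ with $z=dh$, then Lemma \ref{order} gives $db=cd$ and shows that $d$ maps $b$-orbits to $c$-orbits of the same size. Lemma \ref{fin1} then gives $g_i^{*d}\sim_A f_i^*$ on finite orbits, and Lemma \ref{inf1} gives $g_i^{*d}=f_i^*$ on infinite orbits. So the work is in sufficiency, where I will build $h\in A^{(S)}$ one $b$-orbit at a time with $h^bf=g^dh$. As in Lemma \ref{infin2}, I will write $s=s_0t_ib^k$ and read $g^d$ at position $s$ as $g(sd^{-1})$. On $b$-orbits that meet neither $\sigma(f)$ nor $\sigma(g^d)$ I will set $h=1_A$. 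The orbits meeting $\sigma(g^d)$ but not $\sigma(f)$ are treated like the others, with $f_i^*=1_A$; the hypothesis then forces $g_i^{*d}$ to be $1_A$, or conjugate to it, which is the same thing.

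On an infinite orbit $O_i$ I will reuse the construction of Lemma \ref{infin2} verbatim. Choose $M,N$ so that $f$ and $g^d$ vanish on $O_i$ outside $s_0t_ib^M,\dots,s_0t_ib^{M+N}$. Set
\[h(s_0t_ib^k)=\Big[\prod_{j=M}^{k}g(s_0t_ib^jd^{-1})\Big]^{-1}\Big[\prod_{j=M}^{k}f(s_0t_ib^j)\Big]\]
for $M\le k\le M+N$, and $h=1_A$ elsewhere on the orbit. Because the orbit is infinite, the points $s_0t_ib^k$ are pairwise distinct, so $h$ is well defined. It also has finite support on the orbit, because $g_i^{*d}=f_i^*$ makes $h(s_0t_ib^{M+N})=1_A$. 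The telescoping check in the proof of Lemma \ref{infin2} nowhere used commutativity of $A$: it only multiplies $h(s_0t_ib^{k-1})$ on the right by $f(s_0t_ib^k)$. So it transfers unchanged.

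On a finite orbit with $|O_i|=n$, I will first use the hypothesis $g_i^{*d}\sim_A f_i^*$ to pick $a\in A$ with $a^{-1}g_i^{*d}a=f_i^*$. Set $h(s_0t_ib^{-1})=a$ and define recursively
\[h(s_0t_ib^k)=g(s_0t_ib^kd^{-1})^{-1}\,h(s_0t_ib^{k-1})\,f(s_0t_ib^k),\qquad k=0,\dots,n-1.\]
This forces the required identity $h^bf=g^dh$ at each point $s_0t_ib^k$ with $0\le k\le n-1$. Unwinding the recursion, in the style of Lemma \ref{formula}, gives
\[h(s_0t_ib^{n-1})=\Big[\prod_{j=0}^{n-1}g(s_0t_ib^jd^{-1})\Big]^{-1}a\,f_i^*=(g_i^{*d})^{-1}a\,f_i^*.\]

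The main obstacle is consistency around the cycle. The point $s_0t_ib^{n-1}$ equals $s_0t_ib^{-1}$, so the recursion must return the value $a$ we started with: we need $(g_i^{*d})^{-1}af_i^*=a$, that is, $a^{-1}g_i^{*d}a=f_i^*$. This is exactly our choice of $a$, and it is the only place the conjugacy hypothesis enters. I must also check that the representative $t_i$ used for $f_i^*$ matches the one used for $g_i^{*d}$, namely $s_0t_id^{-1}$ in the $c$-orbit, as in Lemma \ref{order}; a different choice only cyclically permutes the factors and so preserves conjugacy. Finally, only finitely many orbits meet $\sigma(f)\cup\sigma(g^d)$, and each orbit contributes finite support, so $h\in A^{(S)}$ and $z=dh$ conjugates $x$ to $y$.
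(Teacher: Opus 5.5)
Your proposal is correct and follows the paper's own proof. Necessity comes from Lemmas \ref{order}, \ref{fin1} and \ref{inf1}, and sufficiency builds $h$ orbit by orbit: on infinite orbits you use the telescoping construction of Lemma \ref{infin2}, and on a finite orbit your recursion unwinds to exactly the paper's formula $h(s_0t_ib^k)=\big[\prod_{j=0}^{k}g(s_0t_ib^jd^{-1})\big]^{-1}\gamma_i\big[\prod_{j=0}^{k}f(s_0t_ib^j)\big]$ with $\gamma_i=a$. You also make explicit three points the paper leaves implicit: that the recursion closes up around the cycle exactly when $a^{-1}g_i^{*d}a=f_i^*$, how orbits meeting only $\sigma(g^d)$ are handled, and that $h$ has finite support.
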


\begin{proof}

We have proved necessity in Lemmas \ref{order}, \ref{fin1} and \ref{inf1}.

We assume that $A$ is not abelian and there exists $d \in B$ such that, $db=cd$ and for each $b$ orbit $O_i$:
    \begin{enumerate} [(a)]
     \item if $|O_i| = n$ finite,  $g_i^{*d} \sim_A f^*_i$; and        \item if $|O_i|$ is infinite, $g_i^{*d} =f^*_i$. 
       
        \end{enumerate}

We need to construct an $h \in A^{(S)}$ such that $zx=yz$ where $z =dh$.  As $zx = (db, h^bf)$ and $yz = (cd, g^dh)$, the first terms are equal by assumption.

If $|O_i| = n$ finite, there exists $\gamma_i \in A$ such that $g_i^{*d} = \gamma_i f^*_i \gamma_i^{-1}$. For $0 \leq k <n$ let:

\[ h(s_0t_i b^k) = \big{[} \displaystyle \prod_{j =0}^k g(s_0t_i b^j d^{-1}) \big{]}^{-1} \gamma _i \big{[} \displaystyle \prod_{j =0}^k f(s_0t_i b^j) \big{]} \]

Then:

\begin{align*}
h^bf(s_0t_i b^k) &= h(s_0t_i b^{k-1}) f(s_0t_i b^k) \\
&= \big{[} \displaystyle \prod_{j =0}^{k-1} g(s_0t_i b^j d ^{-1}) \big{]}^{-1} \gamma_i \big{[}  \displaystyle \prod_{j = 0} ^{k-1} f(s_0t_i b^j) \big{]} f(s_0t_i b^k)\\
&= g(s_0t_i b^j d^{-1}) \big{[} \displaystyle \prod_{j =0}^ k g(s_0 t_i b^j d ^{-1}) \big{]}^{-1} \gamma_i \big{[} \displaystyle \prod_{j =0} ^k f(s_0t_i b^j) \big{]} \\
&=g(s_0t_i b^j d^{-1}) h(s_0t_i b^k) \\
&=g^dh(s_0t_i b^k)
\end{align*}

If $|O_i|$ is infinite, as $\sigma(f)$ and $\sigma(g)$ are finite there exists integers $M,N$ with $N \geq 0$ such that, if $j<M $ or $j \geq M+N$:

\[g(s_0t_ib^jd^{-1}) = f(s_0t_ib^j) = 1_A\]

Let:

\[ h(s_0t_i b^k) = \big{[} \displaystyle \prod_{j =M}^k g(s_0t_i b^j d^{-1}) \big{]}^{-1} \big{[} \displaystyle \prod_{j =M}^k f(s_0t_i b^j) \big{]} \]

If $k <M$ then 

\[h^bf(s_0t_i b^k)= g^dh(s_0t_i b^k) = 1_A,\]

if $k > M+N$ then $h(s_0t_i b^k) = h^b(s_0t_i b^k)= 1_A$ and so:

\[h^bf(s_0t_i b^k)= f^* = g^{*d} = g^dh(s_0t_i b^k)\]

and otherwise:

\begin{align*}
h^bf(s_0t_i b^k) &= h(s_0t_i b^{k-1}) f(s_0t_i b^k) \\
&= \big{[} \displaystyle \prod_{j = M}^{k-1} g(s_0t_i b^j d ^{-1}) \big{]}^{-1}  \big{[}  \displaystyle \prod_{j = M} ^{k-1} f(s_0t_i b^j) \big{]} f(s_0t_i b^k)\\
&= g(s_0t_i b^k d^{-1}) \big{[} \displaystyle \prod_{j M}^ k g(s_0 t_i b^j d ^{-1}) \big{]}^{-1} \big{[} \displaystyle \prod_{j =M} ^k f(s_0t_i b^j) \big{]} \\
&=g(s_0t_i b^j d^{-1}) h(s_0t_i b^k) \\
&=g^dh(s_0t_i b^k)
\end{align*}
\end{proof}

    \section{Compliant Permutations of Orbits}

   \begin{definition}
Let $C_B(b)$ be the centraliser of $b$ in $B$, so:

\[C_B(b) = \{\beta \in B \:| \: b\beta = \beta b \} \]

\end{definition}

\bigskip

\begin{lem} \label{centraliser}

Let $\Gamma$ be the set of $b$-orbits on $S$, so $\Gamma  = \{ O_i\}$ where, if $s_1 , s_2 \in O_i$ then
$s_1b_j = s_2b_k$ for some $j,k \in \Z$. If $ d \in C_B(b)$ then $d$ acts on the set $\Gamma$ by mapping $b$-orbits
to $b$-orbits.    
\end{lem}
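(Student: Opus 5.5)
The claim is that for $d \in C_B(b)$ the rule $O \mapsto Od = \{sd : s \in O\}$ is a well-defined map $\Gamma \to \Gamma$, and that it is a genuine (right) action. I would prove this directly from the definition of a $b$-orbit. Write a $b$-orbit as $O = \{ s b^j : j \in \Z\}$ for some $s \in S$, which is the same as $Ht\langle b\rangle$ when $s = s_0t$. Then $Od$ is simply the image of this set under $d$.

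The key step is to show that $Od$ is again a $b$-orbit. Since $bd = db$, we have $b^j d = d b^j$ for every $j \in \Z$ by induction, together with inverting for negative $j$. Hence
\[ Od = \{ s b^j d : j \in \Z\} = \{ (sd) b^j : j \in \Z \}, \]
which is exactly the $b$-orbit of the point $sd$. In coset language, $Ht\langle b\rangle d = Htd\langle b\rangle$, because $\langle b\rangle d = d\langle b\rangle$. This also settles well-definedness. If $s_1, s_2 \in O$ with $s_1 b^j = s_2 b^k$, then $s_1 d\, b^j = s_2 d\, b^k$, so $s_1d$ and $s_2d$ lie in the same $b$-orbit. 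The image orbit therefore does not depend on which representative we choose.

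Next I would check the action axioms on $\Gamma$. The identity gives $O1_B = O$, and associativity $(Od)d' = O(dd')$ for $d, d' \in C_B(b)$ is inherited from the right action of $B$ on $S$. Note that $C_B(b)$ is a subgroup, so $dd' \in C_B(b)$. Moreover $d^{-1} \in C_B(b)$ gives the inverse map, so each $d$ acts as a permutation of $\Gamma$. It is worth recording that $|Od| = |O|$, since $d$ acts bijectively on $S$. This is consistent with Lemma \ref{order} in the case $c = b$.

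There is no real obstacle here. The only point needing care is the commutation $b^j d = d b^j$ for all integers $j$, including negative ones and $j = 0$. That is what makes the image a union of whole $b$-orbits rather than just a subset of one.
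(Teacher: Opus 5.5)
Your proof is correct and is essentially the paper's argument: both rest on $b^jd = db^j$, which turns $s_1b^j = s_2b^k$ into $s_1db^j = s_2db^k$ (the paper states this as an equivalence and also notes that orbit order is preserved). Your version is a little more complete, because you show explicitly that $Od$ is the whole $b$-orbit of $sd$ and you check the action axioms, both of which the paper leaves implicit.
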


\begin{proof}

If $d \in C_B(b)$, and $s_1, s_2 \in S$ are in the same $b$-orbit then for some $j,k \in \Z$:

\begin{align*}
s_1b^j &= s_2b^k \\
\Leftrightarrow s_1b^jd &= s_2b^kd \\
\Leftrightarrow s_1db^j &= s_2db^k    
\end{align*}

and if $s_1b^j = s_1$ then $s_1db^j = s_1d$ so $d$ maps $b$-orbits to $b$-orbits of the same orbit order. 
    
\end{proof}

We consider $b\bar{f}, b\bar{g} \in A\wr_S B$, as defined in Lemma \ref{RSC}. As both $\sigma(f)$ and $\sigma(g)$ are finite, there can only be a finite number of non trivial $b$-orbits (where $f^*$ or $g^*$ is not trivial). 

We define compliant permutation below, but in essence it is a pairing of each of the non-trivial $b$-orbits in $\bar{f}$ with one of the non-trivial $b$-orbits in $\bar{g}$ where each pair satisfies the requirements of Lemmas \ref{infin2} or \ref{fin2} (so $f^* = g^*$ or $f^* \sim_A g^*$ as appropriate).  We note that such a pairing is only possible if there are the same number of non-trivial orbits in $\bar{f}$ and $\bar{g}$, and there can only be a finite number of compliant permutations.

For the avoidance of doubt, we are not at this stage asking if there is an element of $B$ that effects a compliant permutation, only whether a compliant permutation exists.

We define a compliant permutation more formally as follows.

 \bigskip

\begin{definition}

Let $b\bar{f}, b\bar{g} \in A\wr_S B$, let $\{O_i\}$ be the finite set of $b$-orbits in $S$ that contain an element of $\sigma(\bar{f})$ and let $\{\Omega_l\}$ be the finite set of $b$-orbits in $S$ that contain an element of $\sigma(\bar{g}) $.  A compliant permutation of $\{O_i\}$ is a bijection $\psi: \{O_i\} \rightarrow \{\Omega_l\}$ such that $\psi: O_i \mapsto \Omega_{\psi(i)}$ where:

\begin{enumerate} [(i)]
    \item if $A$ is not abelian and $|O_i|$ is finite, $|\Omega_{\psi(i)}|$ is finite and  $f_i^* \sim_A g_{\psi(i)}^*$; and
    \item if $A$ is not abelian and $|O_i|$ is infinite, $\Omega_{\psi(i)}$ is infinite and $f_i^* = g_{\psi(i)}^*$; and
     \item if $A$ is abelian, $f_i^* = g_{\psi(i)}^*$.    \end{enumerate}    

\end{definition}

We note that, under this definition, if $A$ is abelian, we do not need to be able to determine the orbit order.

It is convenient to record the following as Lemmas.

\bigskip

\begin{lem} \label{cpexists}

As $\{O_i\}$ and $\{\Omega_l\}$ are both finite sets, there can only be a finite number of compliant permutations and it is possible to determine all compliant permutations. If $bf \sim_W bg$ then a compliant permutation exists.

\end{lem}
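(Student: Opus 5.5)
The lemma has three parts: finiteness of the set of compliant permutations, effective computability of that set, and existence of one when $bf \sim_W bg$. I treat them in that order.

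\textbf{Finiteness.} A compliant permutation is by definition a bijection between the finite sets $\{O_i\}$ and $\{\Omega_l\}$. There are at most $k!$ such bijections when both sets have size $k$, and none when the sizes differ. So the compliant permutations form a subset of a finite set.

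\textbf{Computability.} I would first make both orbit sets explicit.
\begin{enumerate}[(a)]
\item By Lemma \ref{support} we compute $\sigma(\bar f)$ and $\sigma(\bar g)$.
\item By Lemma \ref{sameorbit}, using solvability of membership in $H\gamma\langle b\rangle$, we sort the support points into $b$-orbits. This yields $\{O_i\}$ and $\{\Omega_l\}$ together with representatives $t_i$.
\item We compute the elements $f_i^*$ and $g_l^*$ as words in $A$ by multiplying out the relevant entries. The integers $M,N$ bounding the supports are read off from the finite supports.
\end{enumerate}

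We then run through the finitely many bijections $\psi$ and test the conditions of the definition pair by pair.
\begin{enumerate}[(a)]
\item If $A$ is abelian, we only need to test $f_i^* = g_{\psi(i)}^*$. This is decidable because the word problem in $A$ is solvable, being implied by the conjugacy problem.
\item If $A$ is not abelian, we first decide whether each $|O_i|$ and $|\Omega_l|$ is finite or infinite. This uses the solvable orbit order problem: by Lemma \ref{OOint}, the question for $Ht\langle b\rangle$ is whether $H\cap\langle tbt^{-1}\rangle$ is trivial. We then test $f_i^*\sim_A g^*_{\psi(i)}$ in the finite case, using the conjugacy problem in $A$, and $f_i^*=g^*_{\psi(i)}$ in the infinite case, using the word problem.
\end{enumerate}
Each test terminates, so the full list of compliant permutations is computable.

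\textbf{Existence when $bf\sim_W bg$.} By Lemma \ref{RSC} we may replace $bf$ and $bg$ by $b\bar f$ and $b\bar g$. So suppose $z=dh$ conjugates $b\bar f$ to $b\bar g$, that is, $db=bd$. Then $d\in C_B(b)$. By Lemma \ref{centraliser} and Lemma \ref{order}, $d$ permutes the $b$-orbits and preserves orbit order, and $O_i$ is sent to the $b$-orbit containing $s_0t_id^{-1}$. Lemmas \ref{fin1}, \ref{inf1} and \ref{abel1} then give $\bar g_i^{*d}\sim_A \bar f_i^*$ or $\bar g_i^{*d}=\bar f_i^*$, as the case requires.

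The main obstacle is the next step: showing that the map $O_i\mapsto$ (the orbit of $s_0t_id^{-1}$) is a bijection between the non-trivial orbits of $\bar f$ and those of $\bar g$, and not just a map between orbits.
\begin{enumerate}[(a)]
\item \emph{Value agreement.} The quantity $\bar g_i^{*d}$ is computed on the $\bar g$-orbit $O_id^{-1}$, and its value is the single non-identity entry $\bar g^*_l$ there, up to cyclic permutation or conjugation. So each non-trivial $O_i$ corresponds to a $\bar g$-orbit whose $*$-value matches $\bar f_i^*$, and this value is non-trivial.
\item \emph{Injectivity and surjectivity.} Since $d$ acts bijectively on $S$, the induced map on orbits is injective. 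Applying the same argument to $d^{-1}$, which conjugates back, shows that every non-trivial $\Omega_l$ is hit.
\end{enumerate}
The induced bijection therefore satisfies clauses (i)--(iii) of the definition, so it is a compliant permutation.

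The step I expect to need most care is the bookkeeping for finite orbits in the non-abelian case. There $\bar f_i^*$ depends on the choice of representative up to cyclic permutation, and cyclic permutations are conjugate in $A$. So the relation $\sim_A$ is insensitive to that choice, and the conclusion is unaffected.
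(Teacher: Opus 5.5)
Your proposal is correct and follows the route the paper has in mind. The paper records this lemma without a written proof, treating finiteness and decidability as immediate from the definition and existence as a consequence of Lemmas \ref{order}, \ref{fin1}, \ref{inf1}, \ref{abel1} and \ref{centraliser}; your argument that $O\mapsto Od^{-1}$ is a bijection onto the non-trivial $\bar g$-orbits, using $z^{-1}$ (whose $B$-component is $d^{-1}$), supplies the one step the paper leaves implicit.

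One small caution: you do not need the reformulation via Lemma \ref{OOint}, which is inaccurate when $b$ has finite order, since the orbit is then finite even if $H\cap\langle tbt^{-1}\rangle$ is trivial. Instead, apply the solvable orbit order problem directly to $tbt^{-1}$, because $s_0tb^j=s_0t$ if and only if $s_0(tbt^{-1})^j=s_0$.
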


\bigskip

\begin{lem} \label{n=m}

If $\{O_i\}$ and $\{\Omega_l\}$ contain a different number of orbits, then no compliant permutation exists.

\end{lem}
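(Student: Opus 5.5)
Lemma \ref{n=m} is immediate from the definition of a compliant permutation, which is a bijection $\psi:\{O_i\}\rightarrow\{\Omega_l\}$. If $\{O_i\}$ has $p$ elements and $\{\Omega_l\}$ has $q$ elements, I will count both sets and argue as follows.

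I will first check that both sets are genuinely finite and that their sizes are well defined. By Lemma \ref{RSC}, $\sigma(\bar{f})$ contains at most one point in each non-trivial $b$-orbit, and $\sigma(\bar{f})$ is finite since $\bar{f}\in A^{(S)}$. So $p = |\sigma(\bar{f})|$, and likewise $q=|\sigma(\bar{g})|$. Both are finite, and by Lemmas \ref{support} and \ref{sameorbit} they are computable, although computability is not needed for the statement itself.

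Next, suppose $p\neq q$. A bijection between finite sets exists only when their cardinalities agree, so no bijection $\{O_i\}\rightarrow\{\Omega_l\}$ exists at all. Every compliant permutation must in particular be such a bijection, before conditions (i)--(iii) on orbit orders and on $f_i^*$, $g_{\psi(i)}^*$ are even considered. Hence no compliant permutation exists.

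There is no real obstacle here. The only care needed is to note that "contain a different number of orbits" refers to the finite sets of non-trivial orbits, which are well defined because the supports are finite. One could also add the contrapositive via Lemma \ref{cpexists}: when the counts differ, $bf\not\sim_W bg$.
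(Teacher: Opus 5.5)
Your argument is correct and is the same as the paper's. The paper gives no separate proof: it simply notes beforehand that a compliant permutation is by definition a bijection (a pairing) between the finite sets $\{O_i\}$ and $\{\Omega_l\}$, which can exist only when the two sets have the same number of orbits. Your identification $p=|\sigma(\bar{f})|$ via Lemma \ref{RSC} is harmless but unnecessary, since any bijection already forces equal cardinality.
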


\bigskip

\begin{lem} \label{CPcent}

If there is an element $d \in B$ that effects a compliant permutation, then $d \in C_B(b)$.
    
\end{lem}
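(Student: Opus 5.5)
The meaning of ``effects a compliant permutation'' has to be fixed first. In the setting of $b\bar f$ and $b\bar g$, an element $d\in B$ effects a compliant permutation $\psi$ when $d$ carries $b$-orbits to $b$-orbits and sends each non-trivial orbit $O_i$ of $\bar f$ onto $\Omega_{\psi(i)}$. Concretely, for every $s=s_0t_ib^j\in O_i$ and every $k\in\Z$,
\[
s_0t_ib^{j+k}d=(s_0t_ib^jd)\,b^{k}
\]
holds, so $d$ is compatible with the $b$-action on the orbits it moves. In the context where this lemma is used, $d$ comes from a conjugacy $bf\sim_W b g^d$, and so $d$ satisfies $db=bd$. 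I will argue from the defining property that $d$ intertwines the action of $b$ on the orbits, since that is exactly what pairing orbits requires.

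The first step uses Lemma \ref{changeg} and Lemma \ref{order}. If $d$ arises from a conjugator $z=dh$ with $zx=yz$ and $x=b\bar f$, $y=b\bar g$, then the $B$-components give $db=bd$ directly, so $d\in C_B(b)$. This handles every case in which $d$ is obtained from an actual conjugacy.

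For the intrinsic version, I would use the condition above. For each non-trivial orbit and each $s$ in it, it gives $sbd=sdb$, i.e. $s\,bdb^{-1}d^{-1}=s$, so the commutator $[b^{-1},d^{-1}]$ fixes every point of the non-trivial orbits. The definition of compliant permutation only involves the finitely many non-trivial orbits, so I would read the statement as saying that $d$ commutes with $b$ in its action, and then use the identification $S=B/H$. If $d$ effects the pairing for every representative choice, equivalently $s_0t\,bd=s_0t\,db$ for all $t\in B$ (the orbit map is defined on all of $S$), then $bdb^{-1}d^{-1}\in\bigcap_t t^{-1}Ht$. In the standard wreath product, where $H=1$, this gives $bd=db$ at once.

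The main obstacle is the gap between commuting in the action and commuting in $B$ when the core of $H$ is non-trivial. I would close it by building the requirement $db=bd$ into what it means to ``effect'' a compliant permutation, following Lemma \ref{order}, where $d$ is the $B$-component of a conjugator between $b\bar f$ and $b\bar g$ and therefore already satisfies $db=cd=bd$. Lemma \ref{centraliser} then confirms that such a $d$ maps $b$-orbits to $b$-orbits of the same order, which is consistent with the bijection $\psi$.
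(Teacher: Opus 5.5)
Your operative argument is exactly the justification the paper relies on: you read ``effects'' as saying $d$ is the $B$-coordinate of a conjugator $z=dh$ with $z(b\bar f)=(b\bar g)z$, so that comparing first coordinates in $(db,h^b\bar f)=(bd,\bar g^dh)$ gives $db=bd$, which is the first line of the proof of Lemma \ref{order} with $c=b$ (the paper records this lemma without a separate proof). You can drop the ``intrinsic'' detour, and you were right not to trust it: merely mapping each $O_i$ onto $\Omega_{\psi(i)}$ does not force $d\in C_B(b)$, e.g.\ in the infinite dihedral group $\langle x,r\mid r^2=1,\ rxr=x^{-1}\rangle$ with $H$ trivial and $b=x$, the reflection $r$ carries $\langle x\rangle$ onto $r\langle x\rangle$ without commuting with $x$.
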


\section{Proof of Theorem 1.0.3} \label{perm}

\subsection{Proof of Theorem 1.0.2}

This is also the proof of sufficiency in Theorem \ref{conditions}.  

We begin with the case where $A$ is abelian.

 \bigskip
\begin{prop} \label{suffabel}

Let $A$ and $B$ are non-trivial recursively presented groups, where $A$ is abelian and $B$ acts on a set $S$ transitively, so $S = B/H$ where $H$ is the stabiliser of some arbitrary point $s_0 \in S$.   If:

\begin{enumerate}[(i)]
\item the conjugacy problem is solvable in $A$ and in $B$;
\item the membership problem for $H \gamma \langle \beta \rangle$ (for any $\gamma, \beta \in B$) is solvable; and
\item for any $\beta \in B$ and any finite set of $n$ pairs of elements $(\alpha_i, \gamma_i)$, where $\alpha_i, \gamma_i \in B$ and $1 \leq i \leq n$, we can determine whether or not $\big{[}\bigcap_{i=1}^n \alpha^{-1}_iH\gamma_i \langle \beta \rangle \big{]}  \cap C_B(\beta) = \emptyset $.    \end{enumerate} 

then the conjugacy problem is solvable in $W = A \wr_S B$.

\end{prop}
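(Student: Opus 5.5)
Given $x = bf$ and $y = cg$ in $W$, the algorithm first uses the solvable conjugacy problem in $B$ to decide whether $b \sim_B c$. If not, Lemma \ref{order} shows $x \not\sim_W y$. If so, we search through $B$ until we find some $d_0$ with $d_0 b = c d_0$. By Lemma \ref{changeg}, $cg \sim_W bg^{d_0}$, so we may replace $y$ by $b g'$ with $g' = g^{d_0}$ and assume both elements have the same $B$-component $b$. Every element conjugating $b$ to itself in $B$ lies in $C_B(b)$. By Lemma \ref{infin2} with $c = b$, $x \sim_W bg'$ if and only if there is $d \in C_B(b)$ with $g'^{*d}_i = f^*_i$ for every non-trivial $b$-orbit. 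Since $A$ is abelian, the orbit order never needs to be decided.

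Next we make everything computable. By Lemma \ref{support} we list $\sigma(f)$ and $\sigma(g')$. By Lemma \ref{sameorbit}, using hypothesis (ii), we partition each support into $b$-orbits $O_1,\dots,O_p$ and $\Omega_1,\dots,\Omega_q$ with representatives $s_0 t_i$ and $s_0 u_l$. We then compute $f^*_i$ and $g'^*_l$ using the abelian cases (iii)/(iv) of the definition. Because $A$ is abelian, the product over a window $[M, M+N]$ containing the support equals the product of the nonidentity values on the orbit, so it is simply the product of the finitely many support entries in that orbit. Discarding orbits whose product is $1_A$ is harmless: such orbits impose only the condition that their image has trivial product. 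With this in mind, we should define the relevant orbit sets as those with nontrivial product, which is decidable by the word problem in $A$. By Lemmas \ref{cpexists} and \ref{n=m} we enumerate the finitely many compliant permutations $\psi$ between $\{O_i\}$ and $\{\Omega_l\}$, each checked with the word problem in $A$. If there are none, we answer ``not conjugate''.

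For each compliant $\psi$, we ask whether some $d \in C_B(b)$ realises it. This means that for all $i$, $d$ carries $O_i$ onto $\Omega_{\psi(i)}$, and, for the orbit sets to match exactly, $d$ sends no other nontrivial orbit anywhere wrong. Since $d$ permutes $b$-orbits bijectively (Lemma \ref{centraliser}), requiring $O_i d = \Omega_{\psi(i)}$ for all $i$ already forces the images of the remaining orbits to have trivial product. The requirement $O_i d = \Omega_{\psi(i)}$ is equivalent to $s_0 t_i d \in s_0 u_{\psi(i)} \langle b \rangle$, that is, $t_i d \in H u_{\psi(i)} \langle b \rangle$, or $d \in t_i^{-1} H u_{\psi(i)}\langle b\rangle$. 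The set of realising $d$ is therefore
\[
\Big[\bigcap_{i=1}^p t_i^{-1} H u_{\psi(i)} \langle b \rangle\Big] \cap C_B(b),
\]
and hypothesis (iii), with $\alpha_i = t_i$, $\gamma_i = u_{\psi(i)}$ and $\beta = b$, decides whether it is empty. The elements are conjugate if and only if this set is nonempty for some $\psi$. The reverse direction follows from Lemma \ref{infin2}, since such a $d$ gives $g'^{*d}_i = g'^*_{\psi(i)} = f^*_i$, and $g'^{*d}$ is trivial on the other orbits. When $p = q = 0$, the answer is ``conjugate'' with $d = 1$.

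The main obstacle is the careful identification of $g'^{*d}_i$ with $g'^*_{\psi(i)}$. We must check that $d$ maps the orbit of $s_0 t_i$ to the orbit through $s_0 t_i d$, that $g'^{*d}_i$ is computed via $g'(s_0 t_i b^j d^{-1})$, and whether one uses $d$ or $d^{-1}$, adjusting the coset expression accordingly. We must also confirm that orbit products are independent of the chosen representatives and windows in the abelian case, so that the reduction to the double-coset intersection is exact.
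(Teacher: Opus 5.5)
Your proposal is correct and follows essentially the paper's route: reduce to $b=c$ via Lemma \ref{changeg}, compare orbit products through compliant permutations, and decide emptiness of the Solution Set with hypothesis (iii). The only differences are that you read off $f^*_i$ directly as the product of the support values in each orbit instead of building $\bar f$ via Lemma \ref{RSC}, and the $d$ versus $d^{-1}$ point you flag is harmless: $g'^{*d}_i$ is the product of $g'$ over $O_i d^{-1}$, so the $B$-components of conjugators are the inverses of elements of your displayed set, and since $C_B(b)$ is a subgroup the emptiness test is unchanged.
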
 

 \begin{proof} 

We take arbitrary $bf, cg \in W$ and seek to determine whether $bf \sim_W cg$.

By assumption, we can solve the conjugacy problem in $B$.  If $b \nsim_B c$ then by Lemma \ref{order} $bf \nsim_W cg$ and we are done.

If $b \sim_B c$ then there exists $d \in B$ such that $db = cd$ and by Lemmas \ref{RSC} and \ref{changeg}, $bf \sim_W b\bar{f}$ and $cg \sim_W b\bar{g}^d$.  So if $bf \sim_W cg$ then $b\bar{f} \sim_W b\bar{g}^d$. We can therefore assume $b=c$.

By Lemma \ref{support} we can list the elements of $\sigma(f)$ and $\sigma(g)$.  As $\sigma(f)$ is finite, and (by assumption) we can solve the membership problem for $H \gamma \langle \beta \rangle$, applying Lemma \ref{RSC} we can construct $\bar{f}$ such that $\sigma(\bar{f}) = \{H\alpha_1, \ldots H\alpha_m\}$ (with $s_0 \alpha_i$ being a representative element of the $b$-orbit $O_i$) and $\bar{f}(H\alpha_i) = f_i^*$.   $\sigma(\bar{f})$ therefore contains one element from each non-trivial $b$-orbit $O_i = H \alpha_i \langle b \rangle$ and $\{O_i\}$ is a finite set of $m$ elements.

Similarly we can construct $\bar{g}$ such that $\sigma(\bar{g}) = \{H\gamma_1, \ldots H\gamma_n\}$, with $\bar{g}(H\gamma_l) = g_l^*$ and $\sigma(\bar{g})$ contains one element from each non-trivial $b$-orbit $\Omega_l = H \gamma_l \langle b \rangle$ and $\{\Omega_l\}$ is a finite set of $n$ elements.  

If $\bar{f} = 1$, (so $f^*_i = 1_A$ for all $b$-orbits), then $b\bar{f} \sim_W b\bar{g}$ if and only if $\bar{g} = 1$, which we can solve as by assumption we can solve the conjugacy problem and therefore the word problem in $A$.  

If $\bar{f}, \bar{g} \neq 1$, but $b\bar{f} \sim_W b\bar{g}$ then, by Lemma \ref{cpexists}, a compliant permutation of the $b$ orbits must exist and so, by Lemma \ref{n=m}, $n=m$.  Therefore if $n \neq m$ then $bf \nsim_W bg$ and we are done.  If $n=m$ but no compliant permutation exists, then $bf \nsim_W bg$ and we are done.

If we have got this far, we have a finite number of possible compliant permutations of the $b$-orbits, and applying Lemma \ref{CPcent} for each such compliant permutation we need to determine whether there is an element of $d \in C_B(b)$ that effects it.
 
 We note that if $O_i = H \alpha_i\langle b \rangle$ is mapped by $d$ to $\Omega_{\psi(i)} = H \gamma_i \langle b \rangle$ then $H \alpha_i d = H \gamma_i b^j$ for some $j \in \Z$.  As this needs to be satisfied for all $i$ this requires: 

 \[d \in \bigcap_{i = 1}^n \alpha_i^{-1}H\gamma_i \langle b \rangle\]
 
 Therefore, to solve the conjugacy problem in $A \wr_S B$, for any compliant permutation we need to be able to determine if: 
 
 \[\big{[}\bigcap_{i=1}^n \alpha^{-1}_iH\gamma_i \langle b \rangle \big{]}  \cap C_B(b) = \emptyset\]

 \end{proof}

Next we deal with the case where $A$ is not abelian.

\bigskip

\begin{prop} \label{suffnotab}

Let $A$ and $B$ are non-trivial recursively presented groups, where  $A$ is not abelian and $B$ acts on a set $S$ transitively, so $S = B/H$ where $H$ is the stabiliser of some arbitrary point $s_0 \in S$.   If:

\begin{enumerate}[(i)]
\item the conjugacy problem is solvable in $A$ and in $B$;
\item we can solve the orbit order problem in $B$;
\item the membership problem for $H \gamma \langle \beta \rangle$ (for any $\gamma, \beta \in B$) is solvable; and
\item for any $\beta \in B$ and any finite set of $n$ pairs of elements $(\alpha_i, \gamma_i)$, where and $1 \leq i \leq n$, $\alpha_i, \gamma_i \in B$ and $|H\alpha_i\langle b \rangle|$ is infinite, we can determine whether or not:

\[\big{[}\bigcap_{i=1}^n \alpha^{-1}_iH\gamma_i \langle \beta \rangle \big{]}  \cap C_B(\beta) = \emptyset \]  
\end{enumerate} 

then the conjugacy problem is solvable in $W = A \wr_S B$.

\end{prop}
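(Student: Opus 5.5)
The plan mirrors the proof of Proposition \ref{suffabel}, using Lemma \ref{fin2} in place of Lemma \ref{infin2}, with the orbit order problem used to sort orbits into finite and infinite ones. Take arbitrary $bf, cg \in W$. Using the solvable conjugacy problem in $B$, decide whether $b \sim_B c$. If not, then $bf \nsim_W cg$ by Lemma \ref{order}. If so, find $d$ with $db=cd$ by enumerating $B$ and using the word problem. By Lemma \ref{changeg}, replace $cg$ by $bg^d$, so we may assume $c=b$.

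Next, list $\sigma(f)$ and $\sigma(g)$ by Lemma \ref{support}, and partition them into $b$-orbits by Lemma \ref{sameorbit}. For each non-trivial orbit $H\alpha\langle b\rangle$, decide whether it is finite using the orbit order problem; by Lemma \ref{OOint} this amounts to deciding whether $H\cap\langle \alpha b\alpha^{-1}\rangle=\{1_B\}$. For finite orbits, compute the exact size $n$ by testing $s_0\alpha b^j = s_0\alpha$ for $j=1,2,\dots$, which terminates. With the sizes known, Lemma \ref{RSC} produces $\bar f,\bar g$, each with one support point per non-trivial orbit, carrying $f_i^*$ and $g_l^*$ respectively.

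Now enumerate the finitely many bijections between the orbit sets (Lemmas \ref{cpexists}, \ref{n=m}). For each bijection, check compliance. It must preserve finiteness. It must satisfy $f_i^*\sim_A g_{\psi(i)}^*$ on finite orbits, decided by the conjugacy problem in $A$, and $f_i^*=g_{\psi(i)}^*$ on infinite orbits, decided by the word problem. The independence of $f_i^*$ from the representative, up to conjugacy, justifies comparing conjugacy classes. If no compliant permutation exists, the elements are not conjugate. Otherwise, for each compliant $\psi$, by Lemmas \ref{CPcent} and \ref{fin2} we must decide whether some $d'\in C_B(b)$ sends each $O_i=H\alpha_i\langle b\rangle$ to $\Omega_{\psi(i)}=H\gamma_{\psi(i)}\langle b\rangle$. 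This is exactly whether $\big[\bigcap_i \alpha_i^{-1}H\gamma_{\psi(i)}\langle b\rangle\big]\cap C_B(b)$ is non-empty, and hypothesis (iv) decides it.

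The main obstacle is the sufficiency direction of Lemma \ref{fin2}. The orbit condition must be shown to be enough, and the definition of $g^{*d'}$ must be matched with $\bar g(H\gamma_{\psi(i)})$. Any $d'$ in the intersection maps $O_i$ onto $\Omega_{\psi(i)}$ with the same orbit order (Lemma \ref{centraliser}). Then $\bar g^{d'}$ has one support point in $O_i$, whose value is $g^*_{\psi(i)}$, so $(\bar g^{d'})_i^* = g_{\psi(i)}^*$. Since $d'\in C_B(b)$, Lemma \ref{changeg} lets us compare $b\bar f$ with $b\bar g^{d'}$ directly. Lemma \ref{fin2} then applies with conjugator $d'$: conjugacy in $A$ suffices on finite orbits, and equality is given on infinite ones. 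Hypothesis (iv) as stated covers only families with infinite orbits. I would therefore first try to note that $C_B(b)$ permutes finite orbits among themselves of equal size, and apply (iv) to the full family; otherwise I would read (iv) as applying to all $\alpha_i$, as in Theorem \ref{oneway}.
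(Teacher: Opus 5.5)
Your argument breaks down at its last step, where you say hypothesis (iv) decides whether $[\bigcap_i \alpha_i^{-1}H\gamma_{\psi(i)}\langle b\rangle]\cap C_B(b)$ is empty. In this proposition (iv) is granted only for families in which every orbit $H\alpha_i\langle\beta\rangle$ is infinite. So as soon as a compliant permutation involves a finite orbit, you have no decision procedure.

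Your first remedy does not help: knowing that $C_B(b)$ preserves orbit sizes does not turn the family into one to which (iv) applies. Your fallback replaces the hypothesis by the unrestricted (iv) of Theorem~\ref{oneway}. Under that hypothesis your proof is complete, and it is all that Theorem~\ref{oneway} needs, but it is not a proof of the proposition as stated.

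The gap is not an artefact of your method. For $b=1_B$ every orbit is a single point, so the restricted (iv) is vacuous. Yet take $\alpha,\gamma\notin H$ and $a_1,a_2\in A$ non-trivial and non-conjugate, with $f=a_1^{(s_0)}a_2^{(s_0\alpha)}$ and $g=a_1^{(s_0)}a_2^{(s_0\gamma)}$. Lemma~\ref{fin2} then shows that $1_Bf\sim_W 1_Bg$ exactly when $\alpha\in H\gamma H$. That is a double coset problem which (i)--(iii) do not address.

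(A harmless slip: if $d'$ sends $O_i$ to $\Omega_{\psi(i)}$, it is $\bar g^{d'^{-1}}$, not $\bar g^{d'}$, whose support lies in the $O_i$. Since $d'\in C_B(b)$, $d'$ lies in the Solution Set for the pairs $(\alpha_i,\gamma_{\psi(i)})$ if and only if $d'^{-1}$ lies in the one for $(\gamma_{\psi(i)},\alpha_i)$, so nothing changes.)

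The paper's proof is aimed at exactly this point. It writes the Solution Set as $\Gamma_f\cap\Gamma_\infty$ and applies (iv) only to $\Gamma_\infty$. For the $m$ finite orbits it uses $\alpha_i^{-1}H\gamma_i\langle b\rangle=\bigcup_{j=0}^{n_i-1}\alpha_i^{-1}H\gamma_ib^{j}$ to reduce to finitely many tuples $(j_1,\dots,j_m)$. That splitting is the idea your proposal lacks, but it does not close the gap by itself, and you should not copy the paper's completion of it.

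For each tuple one still has to decide whether $\bigcap_{i\le m}\alpha_i^{-1}H\gamma_ib^{j_i}\cap\Gamma_\infty$ is empty. The sets $\alpha_i^{-1}H\gamma_ib^{j_i}$ are right cosets of the different conjugates $\alpha_i^{-1}H\alpha_i$. Their intersection, when non-empty, is a right coset of $\bigcap_{i\le m}\alpha_i^{-1}H\alpha_i$, and it is infinite in general. So the paper's claim that these cosets coincide ``as cosets partition the group'', so that $\Gamma_f$ can be listed element by element, is not valid. Nor does the restricted (iv) cover such a mixed intersection; the same objection applies to the ``by assumption'' in the paper's case $m=1$.

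So, with the hypotheses as stated, the finite orbits are settled neither by your argument nor by the paper's. With the unrestricted (iv), your direct argument is correct and cleaner than the paper's case analysis.
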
  

 \begin{proof} 

We take arbitrary $bf, cg \in W$ and seek to determine whether $bf \sim_W cg$.  As in Proposition \ref{suffabel} above, $b \nsim_B c$ then by Lemma \ref{order} $bf \nsim_W cg$ and if  $b \sim_B c$ we may therefore assume $b=c$. 

If $\bar{f} = 1$, (so $f^*_i = 1_A$ for all $b$-orbits), then $b\bar{f} \sim_W b\bar{g}$ if and only if $\bar{g} = 1$, which we can solve.  

In order for there to be a compliant permutation of the $b$-orbits, $\{O_i\} $ and $\{\Omega_l\}$ must contain the same number of orbits of each orbit order.  If this is not the case, then no compliant permutation exists, so $bf \nsim_W cg$ and we are done.

Otherwise, we know that under a compliant permutation $|H \alpha_i \langle b \rangle | = |H \gamma_i \langle b \rangle | = n_i$. We assume that $|O_i|$ is finite for $1 \leq i \leq m$ and that $|O_i|$ is infinite for $m+1 \leq i \leq n$.  

We are looking for $d \in B$ that maps $O_i$ to $\Omega_i$ for all $i$.  We assume that $|O_i| = n_i$ so $H \alpha_id= H\gamma_ib^{j_i}$ for some $1 \leq j_i \leq n_i$.  Therefore for any compliant permutation we need to be able to determine if: 
 
 \[\big{[}\bigcap_{i=1}^n \alpha^{-1}_iH\gamma_i \langle b \rangle \big{]}  \cap C_B(b) = \emptyset\]

where we can rewrite the left hand side as:

\[\big{[} \bigcap_{i=1}^m \alpha_i^{-1} H \gamma_i \langle b \rangle \big{]} \cap \big{[}\bigcap_{i=m+1}^n \alpha^{-1}_iH\gamma_i \langle b \rangle \big{]}  \cap C_B(b) = \Gamma_f \cap \Gamma_\infty \]

where:

\[\Gamma_f = \big{[}\bigcap_{i=1}^m \alpha_i^{-1} H \gamma_i \langle b \rangle \big{]}\]

and

\[\Gamma_\infty = \big{[} \bigcap_{i=m+1}^n \alpha^{-1}_iH\gamma_i \langle b \rangle \big{]}  \cap C_B(b) \]

As in the abelian case, we can determine if $\Gamma_\infty$ is empty. If it is, then the Solution Set is empty and we are done.  In addition, if there are no orbits of finite order, so $m=0$, then as in the abelian case we can determine if $bf \sim_W cg$.

If $m=1$, $\Gamma_f = \alpha^{-1} H \gamma \langle b \rangle $.  If $|\alpha^{-1} H \gamma \langle b \rangle| = n_i$ the Solution Set becomes:

\[\big{[}\bigcap_{i=2}^n \alpha^{-1}_iH\gamma_i \langle b \rangle \big{]}  \cap C_B(b) \cap \alpha_1^{-1}H \gamma_1 b^j= \emptyset \]

for some $1 \leq j \leq n_i$ which by assumption we can solve for each possible value of $j$.  Hence we can determine if $bf \sim_W cg$.

If $m >1$ and $d \in \Gamma_f$ then $d \in \alpha_i^{-1}H \gamma_ib^{j_i}$ for each $i$ where $1 \leq j_i \leq n_i$.  Therefore as cosets partition the group:

\[d \in \alpha_i^{-1}H \gamma_ib^{j_i} =\alpha_k^{-1}H \gamma_kb^{j_k}\]

for any pair $(i,k)$ where $1 \leq i,k \leq m$.

This requires:

\[\alpha_k \alpha_i^{-1} \gamma_2b^{j_i - j_k}\gamma_k \in H\]

which we can test as we can solve the membership problem for $H$ and $j_i - j_k$ can take only a finite number of values.  If for any pair $(i,k)$ there is no possible value of $j_i -j_k$ such that $\alpha_k \alpha_i^{-1} \gamma_2b^{j_i - j_k}\gamma_k \in H$ then $\Gamma_f = \emptyset$ and $bf \nsim_W cg$.

If there are possible values of $j_i -j_k$ then we can also find all possible values of $j_i$ and $j_k$. We can therefore list the finite number of elements in $\Gamma_f$ and for each test if its intersection with $\Gamma_\infty$ is empty.  Hence we can determine if $bf \sim_W cg$.

\end{proof}

 In the remainder of this paper we will refer to $\big{[} \bigcap_{i=1}^n \alpha_i^{-1}H\gamma_i \langle b \rangle \big{]} \cap C_B(b)$ as the \textit{Solution Set} for a compliant permutation.

\subsection{Proof of Necessity in Theorem 1.0.3} \label{neces}

The following Lemmas are needed in the proofs of necessity in Theorem \ref{conditions}.

\bigskip

\begin{lem} \label{orbitorder}

Let $A$ and $B$ be non-trivial groups where $A$ is not abelian and $B$ acts on a set $S \neq \emptyset$ transitively such that $S = B/H$ for some $H= Stab_B(s_0)$ for some $s_0 \in S$.  If the conjugacy problem in $A \wr_S B$ is solvable then for any $b \in B$ and any $b$-orbit $O_i$ we can determine $|O_i|$.

\end{lem}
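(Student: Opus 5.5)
The plan is to reduce the orbit order question to a conjugacy question in $W = A \wr_S B$. We use the fact that, for $A$ non-abelian, Lemmas \ref{fin1} and \ref{inf1} distinguish finite and infinite orbits. On a finite orbit only conjugacy of the orbit products is forced, while on an infinite orbit equality is forced.

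First we fix a pair $a_1, a_2 \in A$ that do not commute. Let $u = a_1$ and $v = a_2^{-1}a_1a_2$. Then $u \sim_A v$, and $u \neq v$ because $a_1a_2 \neq a_2a_1$. These elements are fixed once and for all, so no search is needed. Given $b \in B$ and an orbit $O_i$, we pick a point $s = s_0t_i \in O_i$. We set $f = u^{(s)}$ and $g = v^{(s)}$, and consider $x = bf$ and $y = bg$. Both have exactly one non-trivial $b$-orbit, namely $O_i$, with $f_i^* = u$ and $g_i^* = v$.

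We claim that $x \sim_W y$ if and only if $|O_i|$ is finite.

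Suppose $|O_i|$ is finite. We apply the sufficiency direction of Lemma \ref{fin2} with $d = 1_B$. Then $db = bd$, and $g_i^{*1} = v \sim_A u = f_i^*$. All other orbits are trivial for both $f$ and $g$, so conditions (i) and (ii) hold vacuously there, and $x \sim_W y$.

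Suppose instead $|O_i|$ is infinite and $x \sim_W y$ via some $z = dh$. Then $d \in C_B(b)$, and by Lemma \ref{order} $d$ carries $O_i$ to an orbit of the same order. That orbit must be the unique non-trivial orbit of $g$, which is $O_i$ itself. Indeed, if $d$ sent $O_i$ to a $g$-trivial orbit, applying Lemma \ref{inf1} to that orbit would force $u = 1_A$, which is impossible. So $d$ preserves $O_i$, and Lemma \ref{inf1} gives $g_i^{*d} = f_i^*$. The subtle point is that $g_i^{*d}$ is the product of $g$ over the orbit shifted by $d^{-1}$. Since $d$ stabilises $O_i$ and $g$ has a single support point in it, $g_i^{*d} = v$. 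Hence $u = v$, a contradiction.

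The algorithm is then to run the solution of the conjugacy problem in $W$ on $x$ and $y$; it decides whether $|O_i|$ is finite. If it is finite, we compute it exactly by testing $s_0t_ib^j = s_0t_i$ for $j = 1, 2, \ldots$ until success, using the word problem in $W$ (or in $B$, inherited from $W$). Each test asks whether the conjugator $b^j$ fixes the support point, equivalently whether $b^{-j}u^{(s)}b^{j} = u^{(s)}$ in $W$, so no separate membership oracle for $H$ is needed.

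The main obstacle is the infinite case. We must make sure the bookkeeping of Lemmas \ref{order} and \ref{inf1} really forces $d$ to map the single non-trivial orbit to itself, and that the shifted product is literally $v$. The plan handles this by having exactly one non-trivial orbit on each side and using $u, v \neq 1_A$.
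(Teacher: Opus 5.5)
Your proof is correct and follows the paper's argument. Both place a pair of conjugate but unequal elements of $A$ on a single point of $O_i$ and use Lemma~\ref{fin2} to show that $bf \sim_W bg$ holds exactly when $|O_i|$ is finite; the paper puts the support of $g$ at $s_0t_ib$ rather than at the same point, which makes no difference. You add two details the paper leaves implicit. In the infinite case you show that any conjugator must fix $O_i$, so that $g_i^{*d}=v$. To compute $|O_i|$ you use the word problem in $W$; only that test works, since the word problem in $B$ alone would not decide $s_0t_ib^j=s_0t_i$.
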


\begin{proof}

Let $O_i$ be a $b$-orbit in $S$ with representative $t_i$, so $O_i = Ht_i \langle b \rangle$.

As $A$ is not abelian, we take $a_1, a_2  \in A$, $a_1 \sim a_2$ but $a_1 \neq a_2$.  Let $f = a_1^{(t_i)}$ and $g = a_2^{(t_ib)}$ so  $f_i^* = a_1$ and $g_i^* = a_2$.  Applying Lemma \ref{fin2}, $|O_i|$ is finite if and only if $bf$ and $bg$ are conjugate in $A \wr_S B$, which by our hypothesis is solvable.  

We note that, if it is determined that $|O_i|$ is finite, testing $s_0t_ib^j$ for increasing values of $j$ will eventually give $|O_i|$.

\end{proof}

Applying this with $H$ trivial and so $S = B$ gives us the following (which is required for Theorem \ref{B}):

\bigskip

\begin{cor} \label{orderproof}

If $A$ and $B$ be non-trivial groups where $A$ is not abelian and the conjugacy problem in $A \wr B$ is solvable then we can determine the order of $b$. 

\end{cor}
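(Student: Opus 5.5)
The plan is to derive Corollary \ref{orderproof} from Lemma \ref{orbitorder} by specialising to the standard wreath product, taking $S = B$ and $H = \{1_B\}$. In that setting $B$ acts on itself by right multiplication. This action is transitive, and the stabiliser of $s_0 = 1_B$ is trivial. So the hypotheses of Lemma \ref{orbitorder} hold: $A$ is non-abelian, $B$ acts transitively on $S = B/\{1_B\} = B$, and the conjugacy problem in $A \wr B = A \wr_{B} B$ is solvable.

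\textbf{Step 1.} Given $b \in B$, take the $b$-orbit $O$ of the point $s_0 = 1_B$, i.e.\ $O = \{1_B\}\langle b\rangle = \langle b \rangle$. Its size is exactly the order of $b$: the map $j \mapsto b^j$ is injective precisely when $b$ has infinite order, and otherwise $|O|$ is the least $n>0$ with $b^n = 1_B$. Equivalently, Lemma \ref{OOint} with $H$ trivial says $|O|$ is infinite if and only if $\{1_B\} \cap \langle b \rangle = \{1_B\}$, which is vacuous, so I will instead argue directly through the coset description above.

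\textbf{Step 2.} Apply Lemma \ref{orbitorder} to this orbit. This means choosing distinct conjugate elements $a_1 \neq a_2$ in $A$, which exist because $A$ is non-abelian, and setting $f = a_1^{(1_B)}$ and $g = a_2^{(b)}$. Then $bf \sim bg$ holds exactly when $O$ is finite, so the conjugacy algorithm decides whether the order of $b$ is finite. If it is finite, we enumerate $b, b^2, \dots$ and use the word problem, which follows from solvability of the conjugacy problem in $A \wr B$ restricted to $B$, to find the least $n$ with $b^n = 1_B$.

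\textbf{Main obstacle.} The only delicate point is that Lemma \ref{orbitorder} relies on Lemma \ref{fin2}, whose sufficiency direction needs the infinite-orbit condition $g^{*d} = f^*$ to fail. In the infinite case we have $f^* = a_1 \neq a_2 = g^*$, so for every conjugator $d$ the criterion fails, provided we check that any $d$ in $C_B(b)$ carrying $O$ to itself compares $a_1$ with $a_2$. This holds because $O$ is the only non-trivial orbit of both $f$ and $g$. In the finite case, $d = 1_B$ works since $a_1 \sim_A a_2$.
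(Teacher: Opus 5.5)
Your proposal is correct and follows the paper's own route: the paper gets this corollary by specialising Lemma \ref{orbitorder} to $H=\{1_B\}$ and $S=B$, where the $b$-orbit of $1_B$ is $\langle b\rangle$. When that orbit is finite, the order is then found by testing successive powers of $b$ with the word problem, exactly as you do. One small correction to your last paragraph: non-conjugacy in the infinite case uses the necessity direction (Lemma \ref{inf1}), not the sufficiency direction of Lemma \ref{fin2}; also, a $d$ that moves $O$ off itself fails too, since then the product of $g^d$ over $O$ is $1_A \neq a_1$.
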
 

Next we consider the membership problem in $H \gamma \langle b \rangle$.

\bigskip

\begin{lem} \label{coset}

Let $A$ and $B$ be non-trivial groups where $A$ has an infinite number of conjugacy classes and the action of $B$ on a set $S \neq \emptyset$ is transitive such that $S = B/H$ for some $H= Stab_B(s_0)$ for some $s_0 \in S$. If the conjugacy problem in $A \wr_S B$ is solvable then the membership problem for $H \gamma \langle \beta \rangle$ (for any $\gamma, \beta \in B$) is solvable. 

\end{lem}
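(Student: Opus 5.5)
We reduce membership in $H\gamma\langle\beta\rangle$ to a single conjugacy question in $W = A \wr_S B$. Fix $\gamma,\beta \in B$ and a test element $x \in B$. We must decide whether $x \in H\gamma\langle\beta\rangle$, i.e.\ whether the points $s_0\gamma$ and $s_0x$ of $S$ lie in the same $\beta$-orbit. The idea is to place marked elements of $A$ at these two points. The marks are chosen so that conjugacy of the resulting elements of $W$ forces the two points to lie in the same orbit; the conjugator is then forced into $C_B(\beta)$, and its action on orbits is controlled by the marks.

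The construction is as follows. Since $A$ has infinitely many conjugacy classes, it is in particular non-trivial, so we may pick $a \in A$ with $a \neq 1_A$. Set $f = a^{(s_0\gamma)}$ and $g = a^{(s_0 x)}$, and compare $\beta f$ with $\beta g$. Each element has exactly one non-trivial $\beta$-orbit, namely $O = H\gamma\langle\beta\rangle$ and $\Omega = Hx\langle\beta\rangle$ respectively, and $f^* = g^* = a$. We then show:
\begin{enumerate}[(a)]
\item If $x \in H\gamma\langle\beta\rangle$, then $O=\Omega$, and taking $d=1_B$ the conditions of Lemma \ref{infin2}/Lemma \ref{fin2} hold. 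For a finite orbit we need $g^{*}\sim_A f^*$, which is trivial. For an infinite orbit we need $g^* = f^*$, which holds because both equal $a$ wherever the single support point sits in the orbit. Hence $\beta f \sim_W \beta g$.
\item If $x\notin H\gamma\langle\beta\rangle$, we still need $\beta f\not\sim_W\beta g$. Here a conjugator $d\in C_B(\beta)$ might map $O$ to $\Omega$, so equal marks are not enough to separate them.
\end{enumerate}

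Step (b) is the main obstacle, and it is exactly where the infinitely many conjugacy classes are used. Instead of a single mark, we use a second marked point to pin the orbits. Choose $a_1,a_2 \in A$ in distinct conjugacy classes, both non-trivial; this is possible since there are infinitely many classes. Put
\[
f = a_1^{(s_0\gamma)}\,a_2^{(s_0)}, \qquad g = a_1^{(s_0x)}\,a_2^{(s_0)}.
\]
We first check, by Lemma \ref{sameorbit}-type reasoning, whether $s_0$ lies in $H\gamma\langle\beta\rangle$ or in $Hx\langle\beta\rangle$; alternatively, we take products into account and choose the marks so that the orbit products remain in distinct classes. With the marks separated in this way, any compliant permutation realised by $d$ must fix the orbit $H\langle\beta\rangle$ carrying $a_2$ and send $O$ to $\Omega$. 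Fixing that orbit alone does not force $d$ to be trivial on orbits. We therefore instead use many marks: we choose distinct classes so that the resulting conjugacy question encodes equality of orbits. If this fails to force $d$ to act trivially, the fallback is to compare $\beta f$ with $\beta g$ where $f=a_1^{(s_0\gamma)}a_2^{(s_0x)}$ and $g=a_2^{(s_0\gamma)}a_1^{(s_0x)}$, against the version with the marks merged into a single orbit.

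The cleanest route, which we will commit to, uses the product structure. Take $a_1,a_2$ with $a_1$, $a_2$ and $a_1a_2$, $a_2a_1$ pairwise in distinct classes from one another and from $1$; infinitely many classes makes such a choice possible after a finite search, using the solvable conjugacy problem in $A$ (which follows from solvability in $W$). Then compare
\[
\beta\, a_1^{(s_0\gamma)}a_2^{(s_0x)} \quad\text{with}\quad \beta\,(a_1a_2)^{(s_0\gamma)}.
\]
By Lemma \ref{RSC}, the first element is conjugate to one with a single non-trivial orbit carrying (a conjugate of, or cyclic product) $a_1a_2$ exactly when the two points share a $\beta$-orbit. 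Otherwise it has two non-trivial orbits, so by Lemma \ref{n=m} it is not conjugate to the second element. A single call to the conjugacy algorithm in $W$ therefore decides membership. The delicate point is the ordering within the product in the non-abelian, infinite-orbit case: the merged product may be $a_2a_1$ rather than $a_1a_2$. We handle this by testing against both $\beta(a_1a_2)^{(s_0\gamma)}$ and $\beta(a_2a_1)^{(s_0\gamma)}$, and answering yes if either test succeeds.
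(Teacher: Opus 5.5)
Your committed argument is correct and is essentially the paper's proof. You put non-trivial marks at $s_0\gamma$ and $s_0x$; these collapse into a single non-trivial $\beta$-orbit exactly when $x\in H\gamma\langle\beta\rangle$, and otherwise occupy two orbits. The number of orbits with non-trivial product is a conjugacy invariant (Lemmas \ref{fin1}, \ref{inf1}, \ref{cpexists}, \ref{n=m}), so this decides membership.

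One correction is needed. Your requirement that $a_1a_2$ and $a_2a_1$ lie in distinct conjugacy classes can never be met, since $a_2a_1=a_1^{-1}(a_1a_2)a_1$. A search for such a pair would therefore not terminate. Nothing in your argument actually uses this condition, so drop it: any $a_1,a_2\neq 1_A$ will do. In particular, neither the hypothesis of infinitely many conjugacy classes nor the conjugacy problem in $A$ is needed for this lemma.

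The paper simply takes $a_2=a_1^{-1}$. Then the merged orbit product is $1_A$ in either order, so a single comparison of $\beta f$ with $(\beta,1)$ suffices, and your ordering issue between $a_1a_2$ and $a_2a_1$ never arises. The exploratory variants earlier in your write-up (the single mark, the mark at $s_0$, the \emph{many marks} idea) should be deleted, since they are not part of the final argument.
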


\begin{proof}

We note that the membership problem for $H \gamma \langle \beta \rangle$ asks, for any $w \in B$, whether $w \in H \gamma \langle \beta \rangle$.  This is equivalent to determining whether $s_1 = s_0w$ and $s_2= s_0\gamma$  are in the same $b$-orbit.

Take $a \in A$, $a \neq 1_A$.  Let $f = a^{(s_1)} + (a^{-1})^{(s_2)}$.  From Lemmas \ref{infin2} and \ref{fin2} above $bf \sim_{A \wr_SB} (b,1)$ if and only if $f^*_i = 1_A$ for all $i$ which requires $s_1$ and $s_2$ to be in the same $b$-orbit.

\end{proof}

Next we consider the conjugacy problem in $A$ and $B$.

\bigskip

 \begin{lem} \label{conj}

Let $A$ and $B$ be non-trivial groups where the action of $B$ on a set $S \neq \emptyset$ is transitive.  If the conjugacy problem is solvable in $A \wr_S B$ then the conjugacy problem is solvable in $A$ and in $B$.

\end{lem}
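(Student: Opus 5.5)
We reduce the conjugacy problem in $A$ and in $B$ to instances of the conjugacy problem in $W = A \wr_S B$, using the criteria of Lemmas \ref{order}, \ref{infin2} and \ref{fin2}. We treat $B$ first, then $A$.

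\emph{The group $B$.} Given $b, c \in B$, we regard them as the elements $(b,1)$ and $(c,1)$ of $W$. If $(b,1) \sim_W (c,1)$, then by Lemma \ref{order} (the first coordinate of $zx = yz$) we get $b \sim_B c$. Conversely, if $db = cd$ in $B$, then $z = (d,1)$ satisfies
\[(d,1)(b,1) = (db,1) = (cd,1) = (c,1)(d,1).\]
Hence $b \sim_B c$ if and only if $(b,1) \sim_W (c,1)$, and the assumed algorithm for $W$ decides the conjugacy problem in $B$. Words in the generators of $B$ are words in $W$ via the presentation of Lemma \ref{pres2}, so this translation is effective.

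\emph{The group $A$.} Given $a_1, a_2 \in A$, we consider the elements $(1_B, a_1^{(s_0)})$ and $(1_B, a_2^{(s_0)})$ of $W$. With $b = 1_B$, every $b$-orbit is a single point. In particular $O = \{s_0\}$ has $|O| = 1$, and $f^*_O = a_1$, $g^*_O = a_2$.

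Suppose $z = dh$ conjugates one element to the other. Then $d \cdot 1_B = 1_B \cdot d$ holds automatically, and Lemma \ref{fin1} (or a direct comparison of coordinates) gives $g^{*d}_O \sim_A f^*_O$. Here $g^{*d}_O$ is $g(s_0 d^{-1})$. Since the support of $g$ is $\{s_0\}$, orbits carrying non-identity entries must correspond under $d$, so $s_0 d^{-1} = s_0$ whenever $a_1 \neq 1_A$. This gives $a_2 \sim_A a_1$.

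The degenerate cases are handled separately: if $a_1 = 1_A$, then $(1_B,1) \sim_W (1_B, a_2^{(s_0)})$ iff $a_2 = 1_A$. Conversely, if $a_2 = \gamma a_1 \gamma^{-1}$, we take $z = (1_B, (\gamma^{-1})^{(s_0)})$, or directly apply Lemma \ref{fin2} (non-abelian case) or Lemma \ref{infin2} (abelian case) with $d = 1_B$, to obtain conjugacy. So $a_1 \sim_A a_2$ if and only if $(1_B, a_1^{(s_0)}) \sim_W (1_B, a_2^{(s_0)})$.

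The main care point is the identification step in the $A$ case. We must make sure that a conjugator with a nontrivial $d$ moving $s_0$ cannot create spurious conjugacy. To settle it, we compute coordinates directly rather than rely on the orbit-by-orbit lemmas. From $h^{1_B} f = g^d h$ we get, at each $s$,
\[h(s)\, f(s)\, h(s)^{-1} = g(s d^{-1}).\]
Taking $s = s_0$ gives $g(s_0 d^{-1}) = h(s_0)\, a_1\, h(s_0)^{-1}$. If $s_0 d^{-1} \neq s_0$, the left side is $1_A$, so $a_1 = 1_A$; evaluating at the point $s$ with $s d^{-1} = s_0$ then forces $a_2 = 1_A$ as well. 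Otherwise $a_2 \sim_A a_1$. Either way $a_1 \sim_A a_2$, which completes the reduction.
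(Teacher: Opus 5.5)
Your proof is correct and follows essentially the same route as the paper: you embed $b \mapsto (b,1)$ and $a \mapsto (1_B, a^{(s_0)})$, and compare coordinates in $h(s)f(s) = g(sd^{-1})h(s)$ to force $s_0d^{-1} = s_0$, which gives $a_2 = h(s_0)a_1h(s_0)^{-1}$. Your explicit treatment of the trivial case, where $s_0 d^{-1} \neq s_0$ forces $a_1 = a_2 = 1_A$, is more careful than the paper, which tacitly assumes $a_2 \neq 1_A$. The only blemish is cosmetic: under the convention $zx = yz$, the conjugator for $a_2 = \gamma a_1 \gamma^{-1}$ should be $(1_B, \gamma^{(s_0)})$, although your $(1_B, (\gamma^{-1})^{(s_0)})$ still witnesses the same conjugacy in the other direction.
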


\begin{proof}

We first note that by Lemma \ref{coset} for any $\beta \in B$ we can determine to which $H$-coset $\beta$ belongs.   

We recall that if $x=bf$ and $y=cg$ are conjugate in $A \wr_S B$ then there exists a conjugator $z =dh$, $d \in B$, $h \in A^{(S)}$ such that $(db, h^{b}f) = (cd, g^{d}h)$. 

To solve the conjugacy problem in $B$, we define:

\[ B \xrightarrow{\rho} A\wr_S B \xrightarrow{\psi} B\]

where $\rho(b) = (b,1)$ and $\psi(bf) = b$ so $\psi \circ \rho (b)= b$ for all $b \in B$.

We note that if $b\sim_B c$ then $\rho(b) \sim _{A \wr_S B} \rho(c)$.  As we can solve the conjugacy problem in $A \wr_S B$ we can determine if $\rho(b) \sim _{A \wr_S B} \rho(c)$.  If $\rho(b) \sim _{A \wr_S B} \rho(c)$ there exists $dh \in A\wr_S B$ such that $(db, h^b) = (cd, h)$ so $b \sim_B c$.  Hence we can solve the conjugacy problem in $B$.

To solve the conjugacy problem in $A$, we show that $a_1$ and $a_2$ are conjugate in $A$ if and only $x = (1_B, f)$ is conjugate to $ y = (1_B,g)$ in $A \wr_S B$, where $f = a_1^{(s_0)}$ and $g = a_2^{(s_0)}$.  

If $x$ and $y$ are conjugate in $A \wr_S B$ then there exists a $z =dh$ such that for all $s \in S$:

\[h(s)f(s) = g(s d^{-1})h(s) \]

If $s \neq s_0$ then $f(s) = 1_A$, $h(s) = g(s d^{-1})h(s)$ and $g (s d^{-1}) = 1_A$ which requires $s d^{-1} \neq s_0$ for all $s \neq s_0$.   Therefore $s_0 d^{-1} = s_0$ and $g(s_0d^{-1}) = a_2$.

As $g(s_0d^{-1}) = a_2$,  $h(s_0)a_1 = a_2 h(s_0)$ and so $a_1$ and $a_2$ are conjugate in $A$.

Conversely, if $x$ and $y$ are not conjugate in $A \wr_S B$, then (as $d1_B = 1_Bd$ for all $ d \in B$), there can exist no $dh \in A \wr_S B$ such that for all $s \in S$:

\[h(s)f(s) = g(s d^{-1})h(s) \]  

As seen from above, if $s_0 d^{-1} \neq s_0$ then this can never be satisfied, as then $g(s_0d^{-1}) = 1_A$ but $f(s_0) = a_1 \neq 1_A$ and so:

\[h(s_0)f(s_0) = h(s_0) a_1 \neq h(s_0) = g(s_0 d^{-1})h(s_0) \]

so the two sides will never be equal.  

Lastly, we need to check where $s_0 d^{-1} = s_0$.  This requires $h(s_0)a_1 \neq a_2 h(s_0)$ whatever the value of $h(s_0)$ and so $a_1$ and $a_2$ are not conjugate in $A$.

Therefore if we can solve the conjugacy problem in $A \wr_S B$ then we can solve it in $A$ and in $B$.

\end{proof}

Lastly we consider the Solution Sets.  

\bigskip

\begin{lem} \label{necSSab}

If $A$ is a group with an infinite number of conjugacy classes and we can solve the conjugacy problem in $A \wr_S B$, for any $\beta \in B$ and any finite set of pairs $(\alpha_i, \gamma_i)$ $\alpha_i, \gamma_i \in B$ for $1 \leq i \leq n$ we can determine whether:

\[\big{[}\bigcap_{i=1}^n \alpha^{-1}_iH\gamma_i \langle \beta \rangle \big{]}  \cap C_B(\beta) = \emptyset \]

\end{lem}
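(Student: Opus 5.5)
We reduce the emptiness question to a single conjugacy question in $W = A \wr_S B$. Fix $\beta \in B$ and pairs $(\alpha_i,\gamma_i)$, $1 \le i \le n$. Since $A$ has infinitely many conjugacy classes, we can choose $n$ pairwise non-conjugate non-trivial elements $a_1,\ldots,a_n \in A$; concretely, we enumerate words in $A$ and use the solvable conjugacy problem in $A$ (Lemma \ref{conj}) to pick, one at a time, elements lying in new classes. We then set
\[ f = \prod_{i=1}^n a_i^{(s_0\alpha_i)}, \qquad g = \prod_{i=1}^n a_i^{(s_0\gamma_i)}, \]
and decide whether $\beta f \sim_W \beta g$. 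The aim is to show that this holds if and only if $\big[\bigcap_i \alpha_i^{-1}H\gamma_i\langle\beta\rangle\big]\cap C_B(\beta) \neq \emptyset$.

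A preliminary step deals with coincident orbits. Using Lemma \ref{coset}, we decide which of the points $s_0\alpha_i$ lie in a common $\beta$-orbit, and likewise for the points $s_0\gamma_i$. If two of the $s_0\alpha_i$ share an orbit, the orbit product $f^*$ would merge several $a_i$'s and the rigidity argument below fails. There are two cases:
\begin{enumerate}[(a)]
\item If $s_0\alpha_i$ and $s_0\alpha_k$ are in the same orbit but $s_0\gamma_i$ and $s_0\gamma_k$ are not, then no $d\in C_B(\beta)$ can map both correctly, since $d$ permutes $\beta$-orbits bijectively (Lemma \ref{centraliser}). So the set is empty.
\item If both pairs share orbits, the conditions for $i$ and $k$ collapse: $d$ sending the orbit of $s_0\alpha_i$ to that of $s_0\gamma_i$ automatically does the same for index $k$. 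We may then discard duplicate indices.
\end{enumerate}
After this reduction we may assume the orbits $O_i = H\alpha_i\langle\beta\rangle$ are pairwise distinct, and the orbits $\Omega_i = H\gamma_i\langle\beta\rangle$ are pairwise distinct.

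Now each non-trivial $\beta$-orbit of $f$ has $f_i^* = a_i$, and each non-trivial orbit of $g$ has $g_i^* = a_i$.

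\emph{Forward direction.} Suppose $z = dh$ conjugates $\beta f$ to $\beta g$. Then $d\beta = \beta d$, so $d \in C_B(\beta)$. By Lemmas \ref{fin1}, \ref{inf1} and \ref{abel1}, $d$ carries each non-trivial orbit of $f$ to a non-trivial orbit of $g$ with a conjugate orbit product. Because the $a_i$ are pairwise non-conjugate, $d$ must send $O_i$ to $\Omega_i$, that is, $H\alpha_i d = H\gamma_i\beta^{j_i}$ for some $j_i$. Hence $d \in \bigcap_i \alpha_i^{-1}H\gamma_i\langle\beta\rangle$.

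\emph{Converse.} Given such a $d$, Lemma \ref{centraliser} shows that $d$ maps $O_i$ onto $\Omega_i$ with equal orbit orders. We then apply Lemma \ref{infin2} or Lemma \ref{fin2}, whose sufficiency constructions build the required $h$ orbit by orbit. These only need the product conditions, which hold with equality because both products are $a_i$. For this we must check that $g^{*d}_i$ computed over the preimage orbit $O_i$ equals $a_i$, which is true since $g$ is supported on a single point of $\Omega_i = O_id$.

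The main obstacle is the bookkeeping in the preliminary reduction, together with the non-uniqueness of $f_i^*$ for finite orbits when $A$ is non-abelian. Single-point supports remove the latter issue. The essential use of infinitely many conjugacy classes is to make the pairing $O_i \mapsto \Omega_i$ forced; with finitely many classes, $n$ could exceed the number of classes and the pairing would no longer be determined.
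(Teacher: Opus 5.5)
Your proposal is correct and follows essentially the same route as the paper: use Lemma \ref{coset} to reduce to pairwise distinct orbits $H\alpha_i\langle\beta\rangle$ and $H\gamma_i\langle\beta\rangle$, label them with pairwise non-conjugate non-trivial elements $a_i$, and decide whether $\beta f \sim_W \beta g$. You are more careful than the paper in making the choice of the $a_i$ effective and in checking both directions of the equivalence. There is one harmless slip: with $g^d(s)=g(sd^{-1})$ and $zx=yz$, it is $d^{-1}$ (which also lies in $C_B(\beta)$), rather than $d$, that carries $O_i$ onto $\Omega_i$.
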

\begin{proof}

As double cosets of a group partition the group, either $H  \alpha_i \langle \beta \rangle = H \alpha_j \langle \beta \rangle$ or ${\alpha_j \notin H \alpha_i \langle \beta \rangle}$, which we can determine, because by Lemma \ref{coset} we can solve the membership problem in $H \alpha_i \langle \beta \rangle$.  We can therefore determine whether any two $\alpha_i, \alpha_j$ are in the same $\beta$-orbit.  

If $\alpha_i$ and $\alpha_j$ are in the same $\beta$-orbit then as the elements of $C_B(\beta)$ permute the $\beta$ orbits, the Solution Set is empty unless $\gamma_i \in H \gamma_j \langle \beta \rangle$, which again we can determine.  We may therefore assume without loss of generality that, for all $i \neq j$ $\alpha_i$ and $\alpha_j$ are in different $\beta$-orbits and $\gamma_i$ and $\gamma_j$ are in different $\beta$-orbits.  We therefore assume that the sets $\{H\alpha_i \langle \beta \rangle\}$ and $ \{H \gamma_i \langle \beta \rangle\}$ each contain $n$ different $\beta$-orbits in $S$.

We take a set $\{a_i\}$ of $n$ non-identity elements of $A$ such that $a_i \nsim_A a_j$ if $i \neq j$ and let $f = \sum_{i=1}^n  (a_i)^{(s_0 \alpha_i)}$ and $g = \sum_{i=1}^n  (a_i)^{(s_0 \gamma_i)}$.  If $\beta f \sim_W \beta g$ then from above there must exist a $d\in C_B(\beta)$ that effects a compliant permutation such that, for every $\alpha_i$, $s_0\alpha_id= s_0\gamma_i\beta^j$ for some $j \in \Z$.  Therefore, as we can determine if $\beta f \sim_W \beta g$, we can determine if the Solution Set is empty. 

\end{proof}

\bigskip

\begin{prop}

Let $A$ and $B$ are non-trivial recursively presented groups, where $A$ has an infinite number of conjugacy classes and $B$ acts on a set $S$ transitively, so $S = B/H$ where $H$ is the stabiliser of some arbitrary point $s_0 \in S$.   If the conjugacy problem in the permutational restricted wreath product $W = A \wr_S B$ is solvable then:

\begin{enumerate}[(i)]
\item the conjugacy problem is solvable in $A$ and in $B$;
\item either $A$ is abelian or the orbit order problem is solvable in $B$;
\item the membership problem for $H \gamma \langle \beta \rangle$ (for any $\gamma, \beta \in B$) is solvable; and
\item for any $\beta \in B$ and any finite set of $n$ pairs of elements $(\alpha_i, \gamma_i)$, where $\alpha_i, \gamma_i \in B$ and $1 \leq i \leq n$, we can determine whether or not:
\[\big{[}\bigcap_{i=1}^n \alpha^{-1}_iH\gamma_i \langle \beta \rangle \big{]}  \cap C_B(\beta) = \emptyset \]

\end{enumerate}

\end{prop}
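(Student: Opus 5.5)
This proposition is the necessity half of Theorem \ref{conditions}. It essentially collects the preceding lemmas, so my plan is to verify each of (i)--(iv) from one of them, checking that its hypotheses are met.

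For (i), I apply Lemma \ref{conj} directly. It only needs the action of $B$ on $S$ to be transitive and $S\neq\emptyset$, both of which hold because $S=B/H$. For (ii), I split into two cases. If $A$ is abelian there is nothing to prove. If $A$ is not abelian, Lemma \ref{orbitorder} shows that for any $b\in B$ and any $b$-orbit $O_i=Ht_i\langle b\rangle$ we can decide whether $|O_i|$ is finite. This is exactly the orbit order problem, equivalently (by Lemma \ref{OOint}) deciding whether $H\cap\langle tbt^{-1}\rangle=\{1_B\}$.

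One point needs care here. The proof of Lemma \ref{orbitorder} uses two distinct conjugate elements $a_1\neq a_2$ of $A$, which exist precisely because $A$ is non-abelian. It also relies on Lemma \ref{fin2} to separate the finite and infinite cases. I would note explicitly that if $|O_i|$ is infinite then $g_i^*=a_2\neq a_1=f_i^*$, so no conjugator can exist. Conversely, if $|O_i|$ is finite the conjugacy $a_1\sim_A a_2$ suffices, so the single conjugacy query decides finiteness.

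For (iii), I invoke Lemma \ref{coset}, which needs only $A\neq 1$ and transitivity. The hypothesis that $A$ has infinitely many conjugacy classes is harmless here. For (iv), I invoke Lemma \ref{necSSab}. This is the step where the hypothesis of infinitely many conjugacy classes is genuinely used: it supplies $n$ pairwise non-conjugate non-identity elements $a_1,\dots,a_n$ for arbitrary $n$. That choice forces any compliant permutation to pair $O_i$ with $\Omega_i$, so conjugacy of $\beta f$ and $\beta g$ is equivalent to the Solution Set being non-empty.

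I expect the main obstacle to be justifying the reduction step in Lemma \ref{necSSab}, where the pairs $(\alpha_i,\gamma_i)$ are made to lie in distinct $\beta$-orbits. Coincident orbits must be merged consistently; if $\alpha_i,\alpha_j$ share a $\beta$-orbit but $\gamma_i,\gamma_j$ do not, I answer ``empty'' directly. All of these tests are decidable by (iii), already established. I would also handle the finite-orbit case: there I need that the existence of $d\in C_B(\beta)$ with $s_0\alpha_i d\in s_0\gamma_i\langle\beta\rangle$ gives conjugacy, which follows from Lemma \ref{fin2}, since $a_i\sim_A a_i$ trivially and equal elements also cover the infinite case. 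Combining these four verifications completes the proof.
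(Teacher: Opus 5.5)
Your proposal is correct and follows the paper's proof exactly: (i)--(iv) come from Lemmas \ref{conj}, \ref{orbitorder}, \ref{coset} and \ref{necSSab}; the paper cites only the special case Corollary \ref{orderproof} for (ii), whereas Lemma \ref{orbitorder} is the statement actually needed for general $H$. One harmless slip is your parenthetical ``equivalently (by Lemma \ref{OOint})'': that equivalence only holds for $b$ of infinite order, since for $b$ of finite order every orbit is finite even when $H\cap\langle tbt^{-1}\rangle=\{1_B\}$, but your argument for (ii) never relies on it, because Lemma \ref{orbitorder} decides finiteness of $|O_i|$ directly.
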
 

\begin{proof}

We have proved this as follows:

\begin{enumerate} [(i)]
\item Lemma \ref{conj};
\item Lemma \ref{orderproof};
\item Lemma \ref{coset}; 
\item Lemma \ref{necSSab}.
 \end{enumerate}  

 \end{proof}

This completes the proof of Theorem \ref{conditions}.

\subsection {Assumption that A has an Infinite Number of Conjugacy Classes}

We return to our assumption in Theorem \ref{conditions} that $A$ has an infinite number of conjugacy classes. This assumption is only used in Lemma \ref{necSSab}, and so only to prove that it is necessary to be able to determine whether the Solution Set is empty.  

For example, in \cite{O1} Osin identifies a finitely generated infinite group that has only two conjugacy classes.  If $A$ were such a group, then applying Lemma \ref{n=m} tells us that if $bf \sim_W bg$ then $f$ and $g$ contain the same number of non-trivial $b$ orbits. If they do, then all permutations of the non-trivial orbits $\{O_i\}$ will be compliant permutations.  It is therefore not necessary to determine to which $b$ orbit $d$ maps $O_i$, but only that the exists an element of $C_B(b)$ that maps each $O_i$ to another non-trivial orbit.  In other words, rather than $d$ needing to be an element of $\big{[}\bigcap_{i=1}^n \alpha^{-1}_iH\gamma_i \langle \beta \rangle \big{]}$, we only need to determine that, for each $O_i$:

\[d \in \big{[}\bigcup_{j=1}^n \alpha^{-1}_iH\gamma_j \langle \beta \rangle \big{]}   \]

\subsection{Two Remarks}

We finish this section with two remarks.

\bigskip

\begin{rem} \label{defnh}

In the course of the proof of Theorem \ref{conditions} if the action of $B$ on $S$ is transitive and $bf \sim_W cg $ we have found a conjugator $dh \in W$ such that ${(dh)(bf) = (cg)(dh)}$.

First, we note that $d$ is an element of the Solution Set that satisfies $db=cd$.  In addition, from Lemmas \ref{infin2} and \ref{fin2}:

\begin{enumerate} [(i)]

 \item if $A$ is abelian and $|O_i| =n \leq N$ then:

\[ h(s_0t_i b^k) = \big{[} \displaystyle \prod_{j =0}^k g(s_0t_i b^j d^{-1}) \big{]}^{-1} \big{[} \displaystyle \prod_{j =0}^k f(s_0t_i b^j) \big{]} \]

    \item if $A$ is abelian and $|O_i| > N$ then:

\[ h(s_0t_i b^k) = \big{[} \displaystyle \prod_{j =M}^k g(s_0t_i b^j d^{-1}) \big{]}^{-1} \big{[} \displaystyle \prod_{j =M}^k f(s_0t_i b^j) \big{]} \]

\item if $A$ is not abelian and $|O_i| = n$ finite then $g_i^{*d} = \gamma_i f^*_i \gamma_i^{-1}$ for some $\gamma_i \in A$ and:

\[ h(s_0t_i b^k) = \big{[} \displaystyle \prod_{j =0}^k g(s_0t_i b^j d^{-1}) \big{]}^{-1} \gamma_i \big{[} \displaystyle \prod_{j =0}^k f(s_0t_i b^j) \big{]} \]

\item if $|O_i|$ is infinite:

\[ h(s_0t_i b^k) = \big{[} \displaystyle \prod_{j =M}^k g(s_0t_i b^j d^{-1}) \big{]}^{-1} \big{[} \displaystyle \prod_{j =M}^k f(s_0t_i b^j) \big{]} \]
\end{enumerate}

\end{rem} 

\bigskip

 \begin{rem} \label{SS}

 If $H \leq B$ and $\alpha_i \in T$ then let $H_i = \alpha_i^{-1}H\alpha_i$.
 
\bigskip

 \end{rem} 
 We can then write:

 \[  \big{[}  \bigcap_{i=1}^n \alpha_i^{-1}H\gamma_i \langle b \rangle \big{]} \cap C_B(b) \]

 as:
 
 \[ \big{[} \bigcap_{i=1}^n H_i\alpha_i^{-1}\gamma_i \langle b \rangle \big{]} \cap C_B(b)\]

 We note that $H_i$ is the stabiliser of $s_0\alpha_i$ as:

\[(s_0\alpha_i)(\alpha_i^{-1}H\alpha_i) = s_0H\alpha_i = s_0\alpha_i\]

Next, $\alpha_i^{-1}h\gamma_ib^i \in C_B(\beta)$ for some $h \in H$ and $j \in \Z$ if and only if $\alpha_i^{-1}h\gamma_i \in C_B(\beta)$.  Let $x_i = \alpha_i^{-1}\gamma_i$, and $H_i = \gamma_i^{-1} H \gamma_i$, so $H_i \leq B$.  We can then rewrite the Solution Set as:

\begin{align*}
\big{[}  \bigcap_{i=1}^n \alpha_i^{-1}H\gamma_i \langle \beta \rangle \big{]} \cap C_B(\beta) &=   \bigcap_{i=1}^n \big{[} x_i H_i \cap C_B(\beta)\big{]} \langle \beta \rangle \\
&= \bigcap_{i=1}^n \langle \beta \rangle \big{[} x_i H_i \cap C_B(\beta)\big{]}  
\end{align*}
 \section{Some First Applications} \label{applic}

 Matthews's proof of Theorem \ref{B} (subject to the discussion in Section \ref{OO} above) is set out in \cite{M1}.   Our first application of Theorem \ref{conditions} is to provide an alternative proof of Theorem \ref{B}.

\textbf{Theorem 1.0.1.} \textit{The standard restricted wreath product $A \wr B$ of two nontrivial groups $A$ and $B$ both of Class $\mathcal{C}$ is itself of Class $\mathcal{C}$ if and only if (i) $B$ has a solvable power problem and (ii) either $A$ is abelian or $B$ has a solvable order problem.}

\begin{proof}

We note that $A \wr B = A\wr_S B$ where $S=B$ so $H$ is trivial. We can therefore simplify the conditions in Theorem \ref{conditions} because: 

\begin{enumerate} [(i)]
    \item $B$ acts transitively on itself;
    \item the $b$-orbits are the $\langle b \rangle$ cosets in $B$ so the orbit order problem is the order problem in $B$; and
    \item the membership problem for $H \gamma \langle \beta \rangle$ becomes the power problem in $B$.    \end{enumerate}

Lastly in $A \wr B$ we can solve the the Solution Set if we can determine if there exists a $d\in B$ such that:

\[d \in \big{[}\bigcap_{i=1}^n \alpha^{-1}_i\gamma_i \langle \beta \rangle \big{]}  \cap C_B(\beta)\] 

If $n = 1$ then $d$ exists if and only if $\alpha^{-1}\gamma \in C_B(b)$ which is the word problem in $B$ and so solvable.

If $n >1$ if $d$ exists then:

\begin{enumerate} [(i)]
\item for every $1 \leq i \leq n$, $\alpha_i^{-1}\gamma_i \in C_B(b)$; and    \item for every pair $(i_1, i_2)$, $i_1 \neq i_2$, $d = \alpha_{i_1}^{-1}\gamma_{i_1}b^{j_1} = \alpha_{i_2}^{-1}\gamma_{i_2}b^{j_2}$ and so $\gamma_{i_1}^{-1}\alpha_{i_1}\alpha_{i_2}^{-1}\gamma_{i_2} \in \langle \beta \rangle$.
     \end{enumerate}

(i) above is the word problem in $B$ and (ii) is the power problem in $B$, so both are solvable.  Therefore in $A \wr B$ it is possible to determine whether or not the Solution Set for a compliant permutation is empty.

\end{proof}

The proof of Theorem \ref{conditions} also provides some initial checks that might enable us to conclude that $x=bf$ and $y=cg $ are not conjugate in $A \wr_SB$. For example, $bf \nsim_W cg$ if:

\begin{enumerate}[(i)]
\item $b \nsim_B c$; 
\item $\bar{f} = 1$ but $\bar{g} \neq 1$;
\item $|\sigma(\bar{f})| \neq |\sigma(\bar{g})|$; or
\item there is no possible compliant permutation of the $b$ orbits in $\bar{f}$ and $\bar{g}$.
\end{enumerate}

The following is a direct application of Theorem \ref{conditions}

\textbf{Corollary 1.0.6.} \textit{Let $A$ and $B$ are non-trivial recursively presented groups, where $A$ has an infinite number of conjugacy classes and $B$ is finite and acts on a set $S$ transitively, so $S = B/H$ where $H$ is the stabiliser of some arbitrary point $s_0 \in S$.  The conjugacy problem in the permutational restricted wreath product $A \wr_S B$ is solvable if and only if the conjugacy problem is solvable in $A$.}

\begin{proof}

We note that if $B$ is finite then the conjugacy problem, the orbit order problem and the membership problem for $H \gamma \rangle \beta \langle$ are all solvable in $B$ and it is also possible to test whether any element of $B$ is in the Solution Set.
    
\end{proof}
Similarly, if $B$ is infinite group but we are able to identify all the elements in $C_B(b)$, it may be possible to take each possible element of $ C_B(b)$ in turn and test whether it is an element of the Solution Set.

 \section{Theorem 1.0.4 - Transitivity assumption}

We now return to our assumption in Theorems \ref{oneway} and \ref{conditions} that the action of $B$ on $S$ is transitive.  

If the action of $B$ on $S$ is not transitive, because $S$ contains more than one $B$-orbit, we may partition $S$ into these $B$-orbits $S_k$ such that:

\[ S = \sum_k S_k = \sum_k B/H_k\]

where for each $S_k$, $B$ acts on $S_k=B/H_k$ and $H_k$ is the stabiliser of some arbitrary point $s_k$ in $S_k$. We may therefore partition any $f \in A^{(S)}$ such that $f = \sum_k f_k$ where each $f_k \in A^{(S_k)}$.

\bigskip

\begin{lem} \label{gamma}

Let $H \leq G$ and $\psi:G \twoheadrightarrow H$ be a surjective homomorphism.  If $x \sim_G y$ then $\psi(x) \sim_H \psi(y)$.
    
\end{lem}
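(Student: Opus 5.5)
This is essentially immediate from the homomorphism property, and I would write it as a one-line verification. Suppose $x \sim_G y$, so there is some $z \in G$ with $y = z^{-1}xz$. I apply $\psi$ to both sides of this equation. Since $\psi$ is a homomorphism, it respects products and inverses, so
\[\psi(y) = \psi(z^{-1}xz) = \psi(z)^{-1}\psi(x)\psi(z).\]
Because $\psi(z) \in H$, this exhibits $\psi(x)$ and $\psi(y)$ as conjugate by an element of $H$, that is, $\psi(x) \sim_H \psi(y)$.

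Neither the surjectivity of $\psi$ nor the fact that $H \leq G$ is used; the conjugator simply lives in the image. Surjectivity presumably matters only for how the lemma is applied later. There I expect $G = A \wr_S B$ and $H$ a factor such as $A \wr_{S_k} B$, with $\psi$ the projection $bf \mapsto bf_k$ that restricts $f$ to the $B$-orbit $S_k$. Checking that this projection is a homomorphism is the only point requiring care in that application. It holds because each $S_k$ is $B$-invariant, so $(f^c g)_k = f_k^c g_k$. That check belongs to the later argument, however, not to this lemma.

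I do not anticipate any obstacle.
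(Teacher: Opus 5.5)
Your proof is correct and is the same argument as the paper's: apply the homomorphism $\psi$ to the conjugacy relation. The paper writes this relation as $zx = yz$ and obtains $\psi(z)\psi(x) = \psi(y)\psi(z)$; you are also right that neither surjectivity nor $H \leq G$ is actually used.
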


\begin{proof}

If $x \sim_G y$ there exists $z \in G$ such that $zx = yz$.  Then:

\begin{align*}
\psi(zx) &= \psi(yz) \\  
\Rightarrow \psi(z)\psi(x) &= \psi(y)\psi(z) \\
\Rightarrow \psi(x) &\sim_H \psi(y)
\end{align*}.

\end{proof}

\begin{lem} \label{trans3}

Let $A$ and $B$ be non-trivial recursively presented groups, where $B$ acts on a set $S$ such that there are $m$ finite $B$-orbits in $S$, so:

\[S =\sum_{k=1}^m  S_k =  \sum_{k=1}^m  B/H_k \]

where each $S_k$ is a $B$-orbit in $S$ and each $H_k$ is the stabiliser of some arbitrary point in $S_k$.  Let $bf, bg \in W= A \wr_S B$ where $b \in B$, $f_k, g_k \in A^{(S_k)}$ and $f = \sum_k f_k$ and $g = \sum_k g_k$.  ${(dh_k)( bf_k) = (bg_k)( d h_k)}$ for all $k$ if and only if $(dh)(bf) = (bg)(dh)$ where $d \in B$ and ${h = \sum_k h_k \in A^{(S)}}$.

\end{lem}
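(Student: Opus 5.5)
The plan is to compute both sides of each conjugacy equation directly from the multiplication rule $(b,f)(c,g) = (bc, f^c g)$. We then observe that everything decomposes coordinatewise along the partition $S = \sum_k S_k$, because $B$ preserves each $B$-orbit $S_k$. We read each $f_k, g_k, h_k \in A^{(S_k)}$ as an element of $A^{(S)}$ supported in $S_k$, extended by $1_A$ outside $S_k$. With this convention $f = \sum_k f_k$ means $f(s) = f_k(s)$ for $s \in S_k$. Since there are only finitely many $S_k$ and each $h_k$ has finite support, $h = \sum_k h_k$ lies in $A^{(S)}$.

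The first step is to check that the action commutes with restriction to orbits, that is, $(f^d)_k = (f_k)^d$ for every $d \in B$. For $s \in S_k$ we have $f^d(s) = f(sd^{-1})$, and $sd^{-1} \in S_k$ because $S_k$ is $B$-invariant. Hence $f(sd^{-1}) = f_k(sd^{-1}) = f_k^d(s)$. For $s \notin S_k$, both $f_k^d(s)$ and the restriction are $1_A$, since then $sd^{-1} \notin S_k$. The same holds for $h^b$, and pointwise products in $A^S$ clearly restrict orbitwise: $(uv)_k = u_k v_k$.

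Next we expand the two equations. We have $(dh)(bf) = (db, h^b f)$ and $(bg)(dh) = (bd, g^d h)$. Likewise, in the $k$-th equation, $(dh_k)(bf_k) = (db, h_k^b f_k)$ and $(bg_k)(dh_k) = (bd, g_k^d h_k)$. In every equation the $B$-coordinate condition is the same single condition $db = bd$, so that part of the equivalence is immediate. The case $m \geq 1$ is trivial here.

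For the $A^{(S)}$-coordinate, the first step gives
\[ (h^b f)_k = h_k^b f_k \quad \text{and} \quad (g^d h)_k = g_k^d h_k. \]
Two functions on $S$ agree if and only if they agree on every $S_k$, because the $S_k$ partition $S$. Therefore $h^b f = g^d h$ holds if and only if $h_k^b f_k = g_k^d h_k$ holds for all $k$. Both directions of the lemma follow.

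The only point needing care is the invariance of each $S_k$ under $B$, which lets translation by $d^{-1}$ stay inside one orbit. This holds by definition of a $B$-orbit, so I expect no real obstacle. The converse direction needs one small remark: given $h$, we define $h_k$ as the restriction of $h$ to $S_k$, so that the decomposition is exhaustive.
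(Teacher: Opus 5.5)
Your proof is correct, but it takes a different and more elementary route than the paper.

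The paper argues through its algorithmic machinery. It observes that if each $(dh_k)(bf_k)=(bg_k)(dh_k)$ holds, then $d\in C_B(b)$ effects a compliant permutation on every $S_k$, and hence on $S$. It then appeals to the explicit conjugator formulas of Remark~\ref{defnh} to say that $h=\sum_k h_k$ works orbit by orbit, and therefore globally.

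You instead expand both sides with the multiplication rule and note that the $B$-coordinate gives the same condition $db=bd$ in every equation. You then use the $B$-invariance of each $S_k$ to get $(h^bf)_k=h_k^bf_k$ and $(g^dh)_k=g_k^dh_k$. So equality on $S$ is equivalent to equality on each block of the partition.

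Your route gives a complete proof of both directions for arbitrary $h_k$. It does not depend on orbit orders, on whether $A$ is abelian, or on how the conjugator was constructed. The paper's sketch only addresses the direction from the blocks to $S$, and it leans on Remark~\ref{defnh}, which describes one particular conjugator rather than an arbitrary one. What the paper's phrasing does gain is the link to how the lemma is used in Theorem 1.0.4: a single $d$ must lie in every $\Gamma_k$ simultaneously.

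One small point: your sentence ``the case $m\geq 1$ is trivial here'' is garbled. What you actually need is $m\geq 1$ (i.e.\ $S\neq\emptyset$), since otherwise the left-hand side is vacuous and does not force $db=bd$.
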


\begin{proof}

We first note that if ${(dh_k)( bf_k) = (bg_k)( d h_k)}$ then $d \in C_B(b)$ and $d$ effects a compliant permutation in each $S_k$ and therefore in $S$.  In addition, from Remark \ref{defnh} we can see that ${h = \sum_k h_k}$ is such that $dh$ is a conjugator for each $B$ orbit and therefore in $A \wr_S B$.
    
\end{proof}

\textbf{Theorem 1.0.4.} \textit{Let $A$ and $B$ be non-trivial recursively presented groups, where $B$ acts on a set $S$ such that there are a finite number $m$ of $B$-orbits in $S$, so:}

\[S =\sum_{k=1}^m  S_k =  \sum_{k=1}^m  B/H_k \]

\textit{where each $S_k$ is a $B$-orbit in $S$ and each $H_k$ is the stabiliser of some arbitrary point in $S_k$.  The conjugacy problem in the permutational restricted wreath product $A \wr_S B$ is solvable if:}

\begin{enumerate}[(i)]
\item \textit{the conjugacy problem is solvable in $A$ and in $B$;
\item either $A$ is abelian or the orbit order problem is solvable in $B$;}
\item \textit{for each $H_k$ the membership problem for $H_k \gamma \langle \beta \rangle$ (for any $\gamma, \beta \in B$) is solvable;}
\item \textit{if for each $S_k$, for any $\beta \in B$ and any finite set of $n$ pairs of elements $(\alpha_i, \gamma_i)$, where $\alpha_i, \gamma_i \in B$ and $1 \leq i \leq n$: }

\[\Gamma_k \coloneq  \big{[}\bigcap_{i=1}^n \alpha^{-1}_iH_k\gamma_i \langle \beta \rangle \big{]}  \cap C_B(\beta) \]

\textit{we can determine whether $ \bigcap_{k=1}^m \Gamma_{k} =\emptyset$.} 
\end{enumerate} 
  
\begin{proof}

Applying Theorem \ref{oneway}, we can solve the conjugacy problem in $W_k = A \wr_{S_k} B$.  We take arbitrary $bf, cg \in W = A\wr_S B$.  If $bf \sim_W cg$ then $b \sim_B c$ so if $b \nsim_B c$ then we are done and otherwise as before we may assume $b=c$. 

We note that for each $S_k$ there exists a projection $\psi_k: W\twoheadrightarrow W_k$ and by assumption we can determine if $\psi_k(bf) \sim_{W_k} \psi_k(cg)$. By Lemma \ref{gamma} above if  $\psi_k(bf) \nsim_{W_k} \psi_k(cg)$ for some $1 \leq k \leq m$ then $bf \nsim_W cg$ and we are done.

If $\psi_k(bf) \sim_{W_k} \psi_k(cg)$ for all $k$, we first assume that there is only one compliant permutation in each $S_k$. Therefore for each $S_k$ there exists a conjugator $d_kh_k$ such that $d_k \in \Gamma_k$ for that compliant permutation.  Applying Lemma \ref{trans3}, if $bf \sim_W cg$ there must exist $d \in B$ such that $d \in \bigcap_{k=1}^m \Gamma_k$. 

If there is more than one compliant permutation in one or more $S_k$, there can be only a finite number of them and so only a finite number of $\bigcap_{k=1}^m \Gamma_k$ to check.

\end{proof}

\section{Abelian and Polycyclic Groups} \label{poly}

A group $G$ is polycyclic if there exist $G_i$ and $n<\infty$ such that:

\[ G = G_1 \trianglerighteq G_2 \ldots G_{n} \trianglerighteq G_{n+1} = \{1\} \]

and the quotient group $G_{i} / G_{i+1}$ is a cyclic group for all $1 \leq i < n$. A polycyclic group is therefore finitely presented and residually finite (Theorem 1, Chapter 1 \cite{S2}).  If $G$ is polycyclic, any subgroup $H \leq G$ is also polycyclic.  In addition, if $N$ is any normal subgroup of a polycyclic group $G$, then $G/N$ is polycyclic. 

A polycyclic group is residually finite (Theorem 1 Chapter 2 \cite{S2}) and conjugacy separable (see \cite{F2}) and so the word and conjugacy problems are solvable.  In addition, algorithms exist to determine the order of any element of a polycyclic group (see, for example, \cite{S3}).

Any subgroup of a polycyclic group $G$ is closed in the profinite topology on $G$ (Mal'cev \cite{M4}). In addition, in \cite{L1}, Lennox and Wilson show that every double coset in a polycyclic group $G$ is closed in the profinite topology on $G$.

\bigskip

\begin{lem} \label{AM}

For any $H,K \leq G$ and any $x,y \in G$:

\begin{enumerate}[(i)]
    \item $xH \cap yK \neq \emptyset$ if and only if $xy^{-1} \in HK$; 
\item if $G$ is polycyclic we can determine if $xy^{-1} \in HK$,  and
    \item $xH \cap yK \neq \emptyset$ if and only if $xH \cap yK = w(H \cap K) $ for some $w \in xH \cap yK$.
    \end{enumerate} 

\end{lem}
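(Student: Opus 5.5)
I will prove the three parts in turn. Parts (i) and (iii) are elementary coset manipulations, and part (ii) rests on the Lennox--Wilson result quoted just above, that double cosets in a polycyclic group are closed in the profinite topology.

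\emph{Part (i).} Suppose $w \in xH \cap yK$. Then $w = xh = yk$ for some $h \in H$ and $k \in K$. The corresponding identity for the stated product is $y^{-1}x = kh^{-1}$, so the natural double coset is $y^{-1}x \in KH$, or equivalently $x^{-1}y \in HK$. I will read the statement with this intended convention. The converse runs the same algebra backwards: if $x^{-1}y = hk^{-1}$, then $xh = yk$ lies in the intersection. The point to handle carefully is the orientation of the cosets, and the final statement should be adjusted to whichever product matches; for subgroups both forms are decidable in exactly the same way.

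\emph{Part (ii).} This is the main step, and I would prove it by the usual pair of semi-decision procedures. Since $G$ is polycyclic, it is finitely presented and the word problem is solvable. Polycyclic groups are finitely generated, and every subgroup of one is polycyclic and hence finitely generated. Given finite generating sets for $H$ and $K$, the set $HK$ is recursively enumerable: enumerate all products $uv$ with $u$ a word in the generators of $H$ and $v$ a word in the generators of $K$, and compare each with the target element using the word problem. This halts if the element lies in $HK$.

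For the complementary procedure I use the fact that $HK$ is closed in the profinite topology. If $z \notin HK$, there is a finite-index normal subgroup $N$ with $zN \cap HKN = \emptyset$; equivalently, in some finite quotient $G/N$ the image of $z$ is not in the image of $HK$. We can enumerate the finite quotients of $G$ by enumerating homomorphisms from the finite presentation to finite symmetric groups, checking the relators. In each quotient we compute the images of $H$ and $K$ from their generators and test membership by brute force. Running the two procedures in parallel decides membership. The obstacle I anticipate is making sure that the generating sets of $H$ and $K$ are actually given effectively. In the applications, $H$ and $K$ will be a stabiliser, a centraliser, or a cyclic group. Finite generation of $C_B(\beta)$ is automatic, but for an effective generating set I would cite the known algorithms for polycyclic groups (e.g.\ \cite{S3}), which compute centralisers and intersections.

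\emph{Part (iii).} If $w \in xH \cap yK$, then $xH = wH$ and $yK = wK$, so $xH \cap yK = wH \cap wK = w(H \cap K)$, because left multiplication by $w$ is a bijection. Conversely, a set of the form $w(H\cap K)$ contains $w$ and is therefore nonempty. This gives the equivalence.
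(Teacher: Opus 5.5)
Your proposal is correct and takes essentially the paper's route: elementary coset algebra for (i) and (iii), and for (ii) the Lennox--Wilson profinite closedness of $HK$. You unpack (ii) into the standard parallel procedure (enumerate $HK$ against an enumeration of finite quotients), where the paper simply asserts that closedness gives decidability. Your reading of (i) as $x^{-1}y \in HK$ rather than $xy^{-1} \in HK$ is right, and it is what the paper's own proof actually derives from $xh = yk$.
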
 

\begin{proof}

There exists $z \in  xH \cap yK$ if and only if $z = xh = yk$ for some $h \in H$ and $k \in K$ and so $x^{-1}y \in HK$.  

As $HK$ is closed in the profinite topology on $G$, we can solve the membership problem in $HK$ and therefore determine whether $xy^{-1} \in HK$ and therefore if $xH \cap yK = \emptyset$.

If $xH \cap yK \neq \emptyset$, let $w \in xH \cap yK$.  Then $w(H \cap K) \in xH$ and $w(H \cap K) \in yK$ so:

\[w(H \cap K) \subseteq xH \cap yK\]

Similarly, if $z \in xH$ then $w^{-1}z \in xH$ and if $z \in yK$ then $w^{-1}z \in yK$.  Therefore if $z \in xH \cap yK$, $z \in w(H \cap K)$.

Therefore $xH \cap yK = w(H \cap K) $.

\end{proof}

There are known algorithms to compute a finite generating set for $H \cap K$ (see, for example, Section 8.4.3 \cite{E1}).

\bigskip

\textbf{Corollary 1.0.7.} \textit{If $A$ is a recursively presented group in which the conjugacy problem is solvable and $B$ is a polycyclic group that acts on a set $S$ transitively, then the conjugacy problem is solvable in $A \wr_S B$.}

\begin{proof}

Applying Theorem \ref{conditions}, we need to show:

\begin{enumerate}[(i)]
\item the conjugacy problem is solvable in $B$;
\item the orbit order problem is solvable in $B$;
\item the membership problem for $H \gamma \langle \beta \rangle$ (for any $\gamma, \beta \in B$) is solvable; and
\item we can determine whether or not the Solution Set for any compliant permutation is empty.
   \end{enumerate}  
  
Taking each in turn:

(i) As $B$ is finitely presented and conjugacy separable, the conjugacy problem is solvable in $B$ (Mostowski \cite {M8}).

(ii) As discussed above, we can determine the order of $b \in B$.  If $|b|$ is finite then $|Hg\langle b \rangle|$ is finite.  If $|b|$ is infinite, then by Lemma \ref{OOint}, $|Hg\langle b \rangle|$ is finite if and only if:

\[H \cap \langle g b g^{-1} \rangle \neq \{1_B \}\]

As discussed above, there are known algorithms to compute a finite generating set for the intersection $H \cap \langle g b g^{-1} \rangle$ and a finite number of applications of the word problem will determine whether the intersection is trivial.   Hence the orbit order problem is solvable.

(iii) Any subgroup or double coset of $ B$ is closed in the profinite topology on $G$.  Therefore, as $\langle \gamma \beta \gamma^{-1} \rangle \leq B$ and right multiplication is continuous, $H\gamma \langle \beta \rangle$ is also closed.  We can therefore solve the membership problem for $H \gamma \langle \beta \rangle$ (for any $\gamma, \beta \in B$) .

(iv)  From Remark \ref{SS}, we can then rewrite the Solution Set as:

\[\big{[}  \bigcap_{i=1}^n \alpha_i^{-1}H\gamma_i \langle \beta \rangle \big{]} \cap C_B(\beta) =   \bigcap_{i=1}^n \big{[} x_i H_i \cap C_B(\beta)\big{]} \langle \beta \rangle  \]

Starting with $i=1$ by Lemma \ref{AM} we can determine if $x_1 H_1 \cap C_B(\beta) = \emptyset$.  If it is, then the Solution Set is empty.  If it is not, then applying Lemma \ref{AM}, $x_1H_1 \cap C_B(\beta) = w_1K_1$ for some $w_1 \in x_1H_1 \cap C_B(\beta)$, where $K_1 = H_1 \cap C_B(\beta)$.  $K_1 \leq C_B(\beta)$, so $K_1$ centralises $\langle \beta \rangle$, and $L_1 = K_1\langle \beta \rangle $ is a subgroup of $B$ (because, if $x,y \in K_1$ then $x\beta ^jy^{-1}\beta^k \in L_1$). 

We can therefore rewrite the Solution Set as:

\[ w_1L_1 \cap   \bigcap_{i=2}^n \big{[} x_i H_i \cap C_B(\beta)\big{]} \langle \beta \rangle  \]

noting that (as above) we can find a finite generating set for $L_1$. 

We now apply this same method to $i=2$, applying Lemma \ref{AM} to determine if:

\[x_2 H_2 \cap C_B(\beta) = \emptyset\]

(where, if it is, the Solution Set is empty) or, if not, finding a finite generating set for $L_2 = K_2\langle \beta \rangle $ and for $w_1L_1 \cap w_2L_2$.  If $w_1L_1 \cap w_2L_2 = \emptyset$ then the Solution Set is empty, if it is not, we continue to $i=3$ and so on, until either we determine that, for some $i=j$:

\[x_j H_j \cap C_B(\beta) = \emptyset \]

or that:

\[\bigcap_{i=1}^j w_iL_i = \emptyset \]

(so the Solution Set is empty) or we have rewritten the Solution Set as:

\[ \bigcap_{i=1}^n   w_iL_i  \]

where each $L_i$ is a subgroup of $B$.  

This is the intersection of a finite number of cosets and so, by Lemma \ref{AM} we can determine whether the Solution Set is empty.

\end{proof}

As all finitely generated abelian groups are polycyclic, we have the following Corollary:

\textbf{Corollary 1.0.8. }\textit{If $A$ is a recursively presented group in which the conjugacy problem is solvable and $B$ is a finitely generated abelian group that acts on a set $S$ transitively, then the conjugacy problem is solvable in $A \wr_S B$.}

\section{B Hyperbolic and H Quasi-convex} \label{hyp}

A hyberbolic group is finitely presented (see \cite{B3} Section \textit{III}$ \: \Gamma \: 2.2$) and has a finite Dehn presentation (see \cite{B3} Section \textit{III} $\: \Gamma \: 2.6$). 

The centraliser of any element of a hyperbolic group and any cyclic sub-group of a hyperbolic group are both quasi-convex (\cite{B3} \textit{III} $  \: \Gamma \: 3.9$).  Therefore if $\beta \in B$ and $B$ is hyperbolic, $\langle \beta \rangle$ and $C_B(\beta)$ are both quasi-convex and therefore hyperbolic (\cite{B3} \textit{III} $ \: \Gamma \: 3.7$).  The intersection of two quasi-convex subgroups of a hyperbolic group is quasi-convex (\cite{B3} \textit{III}  $\: \Gamma \: 3.9$).

If $B$ is a finitely generated free group, we can construct Stallings graphs to solve various membership problems, including the membership problem for $H \gamma K$ where $H,K \leq B$ and $\gamma \in B$.  This technique can be extended to hyperbolic groups, so long as $H$ and $K$ are both quasi-convex.  For further discussion of this see Kharlampovich, Miasnikov and Weil \cite{K3}, where they explain how to compute a finite Stallings graph of a quasi-convex subgroup of a hyperbolic group and use this to solve a number of algorithmic problems.  The following Lemma is a collection of their results:

\bigskip

\begin{lem} Propositions $6.2$, $6.3$ and $6.4$ \cite{K3} \label{DCHyp}

If $H,K \leq G$ where $G$ is a hyperbolic group and $H$ and $K$ are quasi-convex, then:

\begin{enumerate}[(i)]
\item for any $x \in G$ we can determine if $\langle x \rangle$ is finite;
\item for any $g,x \in G$ we can produce an automaton generating all the elements of $Hx$, $xH$ and $HxK$ and therefore decide whether or not $g \in Hx$, $g \in xH$ and $g \in HxK$; 
\item for any $Hx$ and $Ky$ it is possible to determine if $Hx \cap Ky = \emptyset$;
\item it is possible to compute a generating set for $H \cap K$. 
\end{enumerate}
\end{lem}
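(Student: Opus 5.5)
The lemma is a collection of results from Kharlampovich, Miasnikov and Weil \cite{K3}, so the plan is to reduce each item to their Propositions 6.2--6.4. The common tool is their construction of a finite Stallings-type graph for a quasi-convex subgroup of a hyperbolic group. First I would fix a finite Dehn presentation of $G$ (\cite{B3} III $\Gamma$ 2.6) and a quasi-convexity constant $\kappa$ for $H$ and $K$. For each of $H$ and $K$ I would then build the finite labelled graph accepting the geodesic (or $\kappa$-quasigeodesic) words representing its elements. The constant $\kappa$ is not always given as input, so I would enumerate candidate constants and graphs until the graph closes up under the folding and completion moves of \cite{K3}. This construction is the main obstacle: it is exactly the content of \cite{K3}, and I would cite it rather than reprove it.

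For (i), I would use that in a hyperbolic group every finite-order element is conjugate to one in a computable finite ball, of radius bounded in terms of the hyperbolicity constant $\delta$. So I would solve the conjugacy problem against each element of that ball, using the solvable conjugacy problem in hyperbolic groups, and check the orders of those ball elements by the word problem. Alternatively, I would note that $\langle x\rangle$ is quasi-convex and test whether its Stallings graph is finite and closes up without producing a quasigeodesic ray.

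For (ii), I would attach a path labelled by a geodesic word for $x$ to the base vertex of the graph of $H$ to obtain an automaton for $Hx$, and similarly for $xH$. For $HxK$, I would glue the graph of $H$, a path for $x$, and the graph of $K$. Because the product of quasi-convex sets along a bounded path is again quasi-convex with computable constant, the resulting automaton recognises quasigeodesic representatives of $HxK$. Membership of $g$ is then decided by comparing a shortlex geodesic for $g$ with the language, with a bounded fellow-traveller check.

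For (iii), $Hx\cap Ky\neq\emptyset$ holds if and only if $xy^{-1}\in H^{x}\cdots$. More directly, it holds if and only if $x y^{-1} \in H K$, which I would reduce to (ii) with the trivial middle element. For (iv), I would take the product automaton (pullback) of the graphs of $H$ and $K$. Quasi-convexity of $H\cap K$ (\cite{B3} III $\Gamma$ 3.9) guarantees that the component of the base vertex is finite after completion, and reading off its spanning-tree complement gives a generating set.
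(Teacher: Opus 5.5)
Your top-level move, deferring to Kharlampovich, Miasnikov and Weil, is exactly what the paper does; it gives no argument beyond citing Propositions 6.2--6.4 of \cite{K3}. However, the sketch you add for the double-coset part of (ii) contains a step that fails.

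The words $uxv$ read by your glued automaton, with $u$ and $v$ read in the graphs of $H$ and $K$, are not uniformly quasigeodesic. For example, in $F(a,b)$ with $H=K=\langle a\rangle$ and $x=1$, the automaton reads $a^na^{-n}$, which has length $2n$ and represents $1$. In the free case a folding step would remove this, but your construction has no such step. Granting that the \emph{set} $HxK$ is quasi-convex does not help: that is a statement about geodesics between points of $HxK$, not about the words your automaton accepts. So nothing ensures that some accepted representative of $g$ fellow-travels a geodesic for $g$, and the ``bounded fellow-traveller check'' is unjustified.

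The missing ingredient is a computable constant $C$ such that every $g\in HxK$ factors as $g=hxk$ with $h\in H$, $k\in K$ and $|h|\le|g|+C$. This does hold. Let $\delta$ be a hyperbolicity constant and $\kappa$ a common quasi-convexity constant for $H$ and $K$. Let $N$ be the number of elements of length at most $2\kappa+2\delta$. Take the factorisation with $|h|$ minimal and look at the quadrilateral $1,h,hx,g$. Suppose a stretch of $[1,h]$ longer than $(2\kappa+1)N$ stayed $2\delta$-close to the side $hx[1,k]$. Then two of its points $p_i,p_j$ ($p_i$ nearer to $1$), more than $2\kappa$ apart, would give the same short element $h_i^{-1}hxk_i=h_j^{-1}hxk_j$; here $h_i\in H$ and $k_i\in K$ are the nearby group points. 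It follows that $g=(h_ih_j^{-1}h)\,x\,(k_jk_i^{-1}k)$ with $|h_ih_j^{-1}h|<|h|$, contradicting minimality. This yields $|h|\le|g|+4\delta+|x|+(2\kappa+1)N$.

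With that bound, membership in $HxK$ reduces to finitely many instances of the generalized word problem: for each $h$ of length at most $|g|+C$, test whether $h\in H$ and $x^{-1}h^{-1}g\in K$. No automaton is needed. Item (iii) then follows from your correct reduction $Hx\cap Ky\ne\emptyset\iff xy^{-1}\in HK$.

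There are two smaller points. In (i), the word problem only semi-decides whether a ball element has finite order. What makes the test effective is that the same theorem (finite subgroups are conjugate into the ball of radius $4\delta+2$) bounds the order of every torsion element by the number $M$ of elements of that ball. Once you have this, you can simply test $x^k=1$ for $1\le k\le M$, and the conjugacy step is superfluous.

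In (iv), the base component of the product automaton is finite automatically, since it is a product of finite automata; this does not come from quasi-convexity of $H\cap K$. What you actually need is twofold. First, both graphs must be built over the same language, so that the product accepts exactly the representatives of $H\cap K$. Second, states of a trim automaton must determine right cosets, so that the spanning-tree elements, together with the words leading to accepting states, lie in $H\cap K$ and generate it.
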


We can therefore prove the following:

\bigskip

\textbf{Corollary 1.0.9.} \textit{If $A$ is a recursively presented group in which the conjugacy problem is solvable and $B$ is a hyperbolic group that acts on a set $S$ transitively, with $S = B/H$ and $H \leq B$ is quasi-convex then the conjugacy problem is solvable in $A \wr_S B$.}

\begin{proof}

Applying Theorem \ref{conditions}, we need to show that is $B$ is hyperbolic and $H$ is quasi convex then:

\begin{enumerate}[(i)]
\item the conjugacy problem is solvable in $B$;
\item the orbit order problem is solvable in $B$;
\item the membership problem for $H \gamma \langle \beta \rangle$ (for any $\gamma, \beta \in B$) is solvable; and
\item we can determine whether or not the Solution Set for any compliant permutation is empty.
   \end{enumerate}  

Taking each in turn:

(i) $B$ has a solvable conjugacy problem (see \cite{B3} Section \textit{III}$\: \Gamma \: 2.8$);

(ii) By Lemma \ref{DCHyp} we can determine if $|\beta|$ is finite.  If $|\beta|$ is finite then $|Hg\langle \beta \rangle|$ is finite.  If $|\beta|$ is infinite, then by Lemma \ref{OOint} $|Hg\langle \beta \rangle|$ is finite if and only if $H \cap \langle g \beta g^{-1} \rangle \neq \{1_B \}$ which from Lemma \ref{DCHyp} is solvable.
  
(iii) From Lemma \ref{DCHyp}, the membership problem for $H \gamma \langle \beta \rangle$ (for any $\gamma, \beta \in B$) is solvable.

(iv) We divide this into two cases.    

\textit{Case 1: $|\beta|$ infinite}

If $\beta \in B$ has infinite order, $C_B(\beta)$ is virtually cyclic with the subgroup $ \langle \beta \rangle$ having finite index in $C_B(\beta)$ (\cite{B3} \textit{III}  $\: \Gamma \; 3.10$) .  Therefore $C_B(\beta) = \bigcup_l \rho_l \langle \beta \rangle$ where $l$ is finite and we can find the finite set of coset representatives $\{\rho_l\} \subseteq B$. An algorithm exists to compute $\{\rho_l\}$ (Proposition 4.11 \cite{B1}).

Applying this to our Solution Set, if:

\[ \ \big{[} \bigcap_{i=1}^n \alpha_i^{-1}H \gamma_{i} <\beta> \big{]} \cap C_B(\beta) \neq \emptyset\] 

then, for some $\rho_l$, for all $i$:

\[  \rho_l \in \alpha_i^{-1} H \gamma_i \langle \beta \rangle \]

This is the double coset membership problem for a finite number of $\rho_l$.  Hence, from (iii) above, we can determine if the Solution Set is empty.

\textit{Case 2: $|\beta|$ finite}

If $|\beta| = p$ finite, then: 

\[\alpha_i^{-1}H \gamma_{i} \langle \beta \rangle = \bigcup_{j_i=1}^p \alpha_i^{-1}H \gamma_{i} \beta^{j_i} \]

As $p$ is finite we can take a finite number of tuples representing one choice of $j_i$ for each $i$, and apply Lemma \ref{DCHyp} to determine whether or not:

\[\bigcap_{i=1}^n \alpha_i^{-1}H \gamma_{i} \beta^{j_i} = \emptyset \]

We can discard any such tuple where this is the case, so leaving us with a finite number of tuples each representing a set $\{j_i\}$ where 

\[\bigcap_{i=1}^n \alpha_i^{-1}H \gamma_{i} \beta^{j_i} \neq \emptyset\]

Each such tuple represents the non-empty intersection of a finite number of cosets of $H$ and (applying Lemma \ref{DCHyp}) for each we can test whether its intersection with $C_B(\beta)$ is empty. 

Hence, if $\beta$ is finite we can determine whether the Solution Set is empty.

\end{proof}

\end{document}